\documentclass[12pt]{amsart}
\usepackage{amsfonts,amssymb}
\usepackage{amsmath,amscd}
\usepackage[normalem]{ulem}
\usepackage{soul}
\usepackage[all]{xy}
\usepackage{color}

\setlength{\hoffset}{0pt}
\setlength{\voffset}{0pt}
\setlength{\topmargin}{0pt}
\setlength{\oddsidemargin}{0in}
\setlength{\evensidemargin}{0in}
\setlength{\textheight}{8.75in}
\setlength{\textwidth}{6.5in}
\pagestyle{headings}

\theoremstyle{definition}
\newtheorem{rem}[subsection]{Remark}
\theoremstyle{plain}
\newtheorem{prop}[subsection]{Proposition}
\newtheorem{thm}[subsection]{Theorem}
\newtheorem{lem}[subsection]{Lemma}
\newtheorem{cor}[subsection]{Corollary}

\newcommand{\mbf}{\mathbf}
\newcommand{\mbb}{\mathbb}
\newcommand{\mrm}{\mathrm}
\newcommand{\mrk}{\mathfrak}
\newcommand{\A}{\mathcal A}

\newcommand{\D}{\mathcal D}

\newcommand{\F}{\mathcal F}

\newcommand{\Q}{\mbb Q}

\newcommand{\U}{\mbf U}
\newcommand{\tF}{\widetilde{\mathcal F}}

\title[Geometric  quantum $\mathfrak{osp}(1|2)$] {A geometric setting for quantum $\mathfrak{osp}(1|2)$}

\author{Zhaobing Fan and  Yiqiang Li}
\address{Department of Mathematics\\ University at Buffalo, SUNY\\244 Mathematics Building
\\Buffalo, NY 14260}
\email{
zhaobing@buffalo.edu (Z.Fan),
yiqiang@buffalo.edu (Y.Li)
}

\date{\today}
\keywords{Quantum $\mathfrak{osp}(1|2)$, quantum modified algebra,  tensor product module, categorification, perverse sheaf}
\subjclass{17B37, 14F43}

\begin{document}
\begin{abstract}
A geometric  categorification is given for  arbitrary-large-finite-dimensional quotients of  quantum  $\mathfrak{osp}(1|2)$ and  tensor products of its simple  modules.
The  modified quantum  $\mathfrak{osp}(1|2)$ of Clark-Wang, a new version in this paper and the modified quantum  $\mathfrak{sl}(2)$
are shown to be isomorphic to each other over a field containing  $\mathbb Q(v)$ and $\sqrt{-1}$.
\end{abstract}

\maketitle

\section{Introduction}\label{Introduction}

\subsection{}
In the classical work ~\cite{Lusztig90, Lusztig91, Lusztig93} of Lusztig, he gives a geometric construction of the negative half of the quantum  algebra associated to  a Kac-Moody Lie algebra. It is later shown by Vasserot and Varagnolo  in ~\cite{VV11} that the extension algebra of Lusztig's complexes is isomorphic to
the KLR algebra, a.k.a. quiver Hecke algebra,
of symmetric type  introduced independently by Khovanov-Lauda and Rouquier in ~\cite{Khovanov2009diag} and ~\cite{R08}.
The KLR algebras admit an odd/super analogue, the so-called quiver Hecke superalgebras  by Kang-Kashiwara-Tsuchioka ~\cite{KKT} (see also ~\cite{EKL11} and  ~\cite{W09}).
By using  representation theory of  quiver Hecke superalgebras,
Hill and Wang ~\cite{HillWang} give a categorification of the negative half of a covering algebra
involving two parameters  $(q, \pi)$,
which specializes to the negative half of a quantum algebra at $\pi =1 $ and that of a quantum superalgebra at $\pi =-1$.
See also ~\cite{EKL11, EL13, KKO12, KKO13} for further progress in this active research direction.

To this end, it is natural to ask if one can categorify the negative part of Hill-Wang's  covering algebra and, moreover, the covering algebra itself  (or its modified form) by using   representation theory of KLR alegbras, or equivalently from Lusztig's geometric setting.
This question is first raised by Weiqiang Wang  and answered affirmatively for the negative half of the covering algebra by the authors  in ~\cite{FL12} and ~\cite{CFLW} together with Clark and  Wang.
A new idea in answering this question is that the Tate twist (mod 4)   categorifies the square root of the second parameter $\pi $ in the covering algebra.

After the negative half of  the covering algebra is categorified in Lusztig's geometric setting for negatives halves of quantum algebras,
we are  forced intuitively to search for a categorification of  the covering algebra  in a geometric setting analogous to the one for the negative halves.
Indeed, there is a such setting for quantum $\mathfrak{osp}(1|2)$, one of the smallest quantum superalgebras.
It is in  Beilinson-Lusztig-MacPherson's  geometric construction \cite{BLM90}
of
the $q$-Schur algebra,  a quotient of a quantum algebra of type $A$.

As one of the main results in this paper, we give a geometric  construction of an arbitrary large  finite-dimensional quotient of  quantum  $\mathfrak{osp}(1|2)$, as well as tensor products of its highest weight modules.
We follow the approach taken in our previous work by adding the Tate twist, or the mixed structure, to the geometric setting laid out  in  ~\cite{BLM90} involving the geometry of two copies of Grassmannians.
With a slight, though non-trivial, modification of the generators for the geometric construction of quantum $\mathfrak{sl}(2)$ in ~\cite{BLM90},
we obtain a quotient of quantum $\mathfrak{osp}(1|2)$.
In this realization, the Tate twist corresponds to the imaginary unit $t=\sqrt{-1}$.
We tend to call this quotient the $q$-Schur algebra of quantum $\mathfrak{osp}(1|2)$ because it gets identified with that of quantum $\mathfrak{sl}(2)$ immediately from the construction. Along the way, we also obtain a geometric construction of  tensor products of finite dimensional simple modules of quantum $\mathfrak{osp}(1|2)$ following ~\cite{Zheng07} (see also ~\cite{GL92})
using perverse sheaves on Grassmannians.

Just like the authors' previous work \cite{FL12}, the simple perverse sheaves of weight zero arising from this construction form a basis for the categorified quotients and tensor product modules.
The structure constants with respect to this basis possess again a positivity property in an appropriate sense (see Theorem \ref{thm1}).
We provide with an algebraic  characterization, up to a sign, of the basis by using a bilinear form, integrality and bar invariant properties.

In the last section, we  formulate a new  version of   modified quantum $\mathfrak{osp}(1|2)$ following  ~\cite{BLM90} and ~\cite{Lusztig93}.
As far as we can tell, this is the most natural definition from our presentation of quantum $\mathfrak{osp}(1|2)$ and its geometric construction.
We further observe that our modified quantum $\mathfrak{osp}(1|2)$ is isomorphic to that of Clark-Wang in ~\cite{CW12},
and,  surprisingly, to Lusztig's modified quantum $\mathfrak{sl}(2)$ over a field containing $\mbb Q(v)$ and $t=\sqrt{-1}$.
We arrive at the latter isomorphism by observing the facts that quantum $\mathfrak{osp}(1|2)$ and $\mathfrak{sl}(2)$
have the same $q$-Schur algebras
from the geometric construction and that  modified versions of quantum algebras sit inside the limit of a projective system of  $q$-Schur algebras.
The proof turns out to be extremely easy.
A first consequence of the isomorphism of  modified quantum $\mathfrak{osp}(1|2)$ and $\mathfrak{sl}(2)$ is that there exists
a basis in the modified quantum $\mathfrak{osp}(1|2)$,
coming from the canonical basis of quantum $\mathfrak{sl}(2)$, whose structure constants are in $\mathbb N[v, v^{-1}]$.
Such a  positivity property  in quantum $\mathfrak{osp}(1|2)$  is rather mysterious,
given the fact that  the super sign ``$-1$'' is essentially used in the definition of the quantum $\mathfrak{osp}(1|2)$.
In other words, in modified quantum $\mathfrak{osp}(1|2)$, the super sign ``$-1$'' (or $t^2$ for the modified covering algebra) can be moved outside the structure.
A second consequence of the isomorphism is that  the categories of weight modules of quantum $\mathfrak{osp}(1|2)$ and $\mathfrak{sl}(2)$ are isomorphic to each other.
Moreover, we are able to  construct  very explicit and simple functors of isomorphism  between the two  categories of weight modules.
A third consequence is that Lauda's categorification (\cite{Lauda10}) of quantum $\mathfrak{sl}(2)$
can be served as a version of categorifications of modified quantum $\mathfrak{osp}(1|2)$.

We remark  that the results obtained in this paper can be rephrased in the setting of the  covering algebras (or their modified versions).
We stick to quantum $\mathfrak{osp}(1|2)$ for simplicity.

The coincidence of  modified quantum $\mathfrak{sl}(2)$ and $\mathfrak{osp}(1|2)$ is somehow predicted by various results in literature and, in turn,  explains why the representation theories of the two algebras are identical.
In ~\cite{CFLW}, we will show that  modified quantum algebras and superalgebras (or covering algebras) are isomorphic in general cases.

Meanwhile, Weiqiang Wang  informed us that the equivalence of categories  of weight modules  is known to him more than a year ago using the work ~\cite{Lan02}.
This equivalence   is also proved independently by Kang-Kashiwara-Oh (\cite{KKO13}) in a completely different way and a more general setting.

\subsection{}
We thank Sean Clark and Weiqiang Wang for numerous interesting discussions and their generosities in sharing their ideas with us.
We  thank Alexander Ellis and Aaron Lauda for sending us their slides.
Y. Li is partially supported by the NSF grant DMS-1160351.

\setcounter{tocdepth}{1}
\tableofcontents

\section{Preliminaries}\label{preliminary}
\label{super}

\subsection{}

Let $v$ be an indeterminate and $t$  the imaginary unit such that $t^2=-1$.
For any $k\leq  n\in \mathbb{N}$, we set
$$\begin{array}{lll}
  [n]_v=\frac{v^n-v^{-n}}{v-v^{-1}},& [n]_v^!=\prod_{k=1}^n[k]_v, & \begin{bmatrix}n\\k\end{bmatrix}_v=\frac{[n]_v^!}{[k]_v^![n-k]_v^!},\\
   \begin{bmatrix}n\end{bmatrix}_{v,t}=\frac{(vt)^{n}-(vt^{-1})^{-n}}{vt-(vt^{-1})^{-1}},& [n]_{v,t}^!=\prod_{k=1}^n[k]_{v,t},& \begin{bmatrix}n\\k\end{bmatrix}_{v,t}=\frac{[n]_{v,t}^!}{[k]_{v,t}^!
  [n-k]_{v,t}^!}.
\end{array}$$
One can easily check that
  \begin{equation*}\label{eq22}
\ [n]_{v,t}=t^{n-1}[n]_v,\quad \ [n]^!_{v,t}=t^{\frac{n(n-1)}{2}}[n]^!_v,\quad \ \begin{bmatrix}
      n\\k
    \end{bmatrix}_{v,t}=t^{k
    (n-k)}\begin{bmatrix}
      n\\k
    \end{bmatrix}_v.
  \end{equation*}

 The quantum algebra   $\U$  associated to the ortho-symplectic Lie algebra $\mathfrak{osp}(1|2)$ is, by definition,
 an associative $\mbb{Q}[t](v)$-algebra with 1 generated by the  symbols $E, F, K$ and $K^{-1}$, subject to the following defining relations.
\begin{align}
 KK^{-1}=&1=K^{-1}K. \tag{S1}\\
KE=v^2t^{-2}EK,\ &  \ KF=v^{-2}t^2FK. \tag{S2}\\
EF-t^2FE & =\frac{K-K^{-1}}{v-v^{-1}}.  \tag{S3}
\end{align}
 The above presentation of the algebra $\U$ is new.
Note that the algebra $\U$ is isomorphic to  the algebra  $U_1$ in ~\cite[2.3]{CW12} if the ground field is extended to $\mbb{Q}[t](v)$.
For the  reader's convenience, we provide an isomorphism defined by the following correspondence.
\begin{center}
\renewcommand\arraystretch{1.6}
\begin{tabular}{|c|c|c|c|c|c|c|}
\hline
 $\U$& $E$ &$ F$ & $t^{-1}K$ &$vt^{-1}$&$t^2$ & $[n]_{v,t}$ \\
\hline
$U_1$ &$E$ &$ F$ &$K$&$q$&$\pi$& $[n]^{\pi}_{(vt^{-1})}$\\
\hline
\end{tabular}
\end{center}\vspace{6pt}

The algebra $ \U$  admits a super algebra structure by setting the parity function $p$ to be $p(E)=p(F)=1$ and $p(K)=p(K^{-1})=0$.
By convention, the multiplication on $\U\otimes \U$ is defined by
\begin{equation*}\label{eq71}
(x\otimes y)(x'\otimes y')=t^{2p(y)p(x')}xx' \otimes yy',
\end{equation*}
where $x$,  $y$, $x'$ and $y'$ are homogeneous elements in $\U$.
This gives a super algebra structure on $\U\otimes \U$.
Moreover, a straightforward calculation yields the following proposition.

\begin{prop}\label{prop11}
  There is a unique superalgebra homomorphism
  $\Delta: \U\rightarrow \U\otimes \U$ defined by
  $$\begin{array}{llll}
  & &\Delta(K)=K \otimes K,                     &\Delta(K^{-1})=K^{-1} \otimes K^{-1},\vspace{6pt}\\
  & &\Delta(E)=E\otimes 1+K\otimes E,& \Delta(F)=1 \otimes F+F\otimes K^{-1}.\vspace{6pt}
  \end{array}$$
  \end{prop}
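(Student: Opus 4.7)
The plan is to argue uniqueness first and then existence by checking that the proposed assignments respect each of the three families of defining relations (S1)--(S3), using the super-multiplication on $\U \otimes \U$.

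Uniqueness is immediate: since $E, F, K, K^{\pm 1}$ generate $\U$ as an associative algebra, a superalgebra homomorphism out of $\U$ is determined by its values on these generators. For existence, I would invoke the standard universal property: it suffices to show that the specified images of the generators satisfy (S1)--(S3) inside $\U \otimes \U$, where multiplication is given by $(x \otimes y)(x' \otimes y') = t^{2p(y)p(x')} xx' \otimes yy'$ and the parities are $p(E) = p(F) = 1$, $p(K^{\pm 1}) = 0$.

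Relations (S1) and (S2) are straightforward book-keeping. For (S1), $K$ and $K^{-1}$ are even, so no super-signs appear and $\Delta(K)\Delta(K^{-1}) = KK^{-1} \otimes KK^{-1} = 1 \otimes 1$. For (S2), I would expand $\Delta(K)\Delta(E)$ and $\Delta(E)\Delta(K)$ term-by-term; the only super-sign arises from $p(E) = 1$ meeting $p(K) = 0$, which is trivial, and the relation $KE = v^{2}t^{-2}EK$ transports directly to each tensor factor. The case of $F$ is symmetric.

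The substantive check is (S3). Expanding $\Delta(E)\Delta(F)$ via the super-multiplication rule yields four terms, one of which picks up a factor $t^{2p(E)p(F)} = t^{2}$, namely $t^{2} KF \otimes EK^{-1}$; similarly $\Delta(F)\Delta(E)$ contains the term $t^{2} E \otimes F$. Forming $\Delta(E)\Delta(F) - t^{2} \Delta(F)\Delta(E)$, the pair $E \otimes F - t^{4}E \otimes F$ vanishes because $t^{4} = 1$. Grouping the remaining six terms into those of the form $\bullet \otimes K^{-1}$ and $K \otimes \bullet$, applying (S3) inside each tensor factor produces
\[
\frac{K \otimes K - K^{-1} \otimes K^{-1}}{v - v^{-1}} = \frac{\Delta(K) - \Delta(K^{-1})}{v - v^{-1}},
\]
which is the right-hand side. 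What remains is the cancellation of the leftover cross-terms $t^{2}KF \otimes EK^{-1} - t^{4}FK \otimes K^{-1}E$; this is precisely where I expect the only subtle step to lie. I would handle it by using (S2) to rewrite $K^{-1}E = v^{-2}t^{2}EK^{-1}$ and $KF = v^{-2}t^{2}FK$, upon which the two terms are seen to coincide (using $t^{4} = 1$), and so cancel.

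Thus the main obstacle is bookkeeping of the super-signs $t^{2p(y)p(x')}$ together with the factor $t^{2}$ on the left of $FE$ in (S3); everything else is mechanical. Once (S3) is verified, the universal property of the algebra presented by generators and relations produces the desired superalgebra map $\Delta$, and uniqueness completes the proof.
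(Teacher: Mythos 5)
Your route is exactly the one the paper has in mind: the paper gives no detailed argument (it records the proposition as the outcome of ``a straightforward calculation''), and your plan---uniqueness because $E,F,K^{\pm1}$ generate, existence by verifying (S1)--(S3) on the proposed images with the super-multiplication $(x\otimes y)(x'\otimes y')=t^{2p(y)p(x')}xx'\otimes yy'$---is that calculation. However, one super-sign in your (S3) check is wrong, and as literally written your final cancellation fails. In $\Delta(F)\Delta(E)$ the cross-term is $(F\otimes K^{-1})(K\otimes E)=t^{2p(K^{-1})p(K)}\,FK\otimes K^{-1}E=FK\otimes K^{-1}E$, with \emph{no} super-sign, because the sign is governed by the parities of $K^{-1}$ and $K$, both even. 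Hence in $\Delta(E)\Delta(F)-t^{2}\Delta(F)\Delta(E)$ the leftover cross-terms are $t^{2}KF\otimes EK^{-1}-t^{2}FK\otimes K^{-1}E$, not $t^{2}KF\otimes EK^{-1}-t^{4}FK\otimes K^{-1}E$ as you wrote. The discrepancy is not cosmetic: with your coefficient $t^{4}$, the rewriting $KF=v^{-2}t^{2}FK$ and $K^{-1}E=v^{-2}t^{2}EK^{-1}$ turns the two terms into $v^{-2}\,FK\otimes EK^{-1}$ and $-v^{-2}\,FK\otimes EK^{-1}$, so their difference is $2v^{-2}\,FK\otimes EK^{-1}\neq 0$ and the claimed coincidence ``using $t^{4}=1$'' does not hold. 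With the correct coefficient $t^{2}$ on both cross-terms, each becomes $t^{4}v^{-2}\,FK\otimes EK^{-1}=v^{-2}\,FK\otimes EK^{-1}$ and they cancel, by precisely the mechanism you describe. So the idea and the grouping (the $\bullet\otimes K^{-1}$ and $K\otimes\bullet$ blocks reproducing $\frac{\Delta(K)-\Delta(K^{-1})}{v-v^{-1}}$, the pair $E\otimes F-t^{4}E\otimes F$ vanishing) are all correct and match the paper; only this one sign needs repair for the verification of (S3) to close.
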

 A standard induction gives rise to the following Lemma.

\begin{lem}\label{lem9}
Let $F^{(n)}=\frac{F^n}{[n]^!_{v,t}}$,  for any $n\in \mbb{N}$.  We have
  $$EF^{(n)}=t^{2n}F^{(n)}E+t^{n-1}F^{(n-1)}\frac{v^{1-n}K-v^{n-1}K^{-1}}{v-v^{-1}}.$$
\end{lem}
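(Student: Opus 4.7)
The plan is to prove the identity by induction on $n$. The base case $n=1$ reduces to relation (S3) after noting $F^{(1)}=F$, $F^{(0)}=1$, and $t^0=v^0=1$. For the induction step, I would use the elementary identity $F^{(n+1)}=F^{(n)}F/[n+1]_{v,t}$, multiply the inductive hypothesis on the right by $F/[n+1]_{v,t}$, and simplify until the desired expression for $EF^{(n+1)}$ appears.

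The calculation breaks into two independent tasks. First, in the term $t^{2n}F^{(n)}EF/[n+1]_{v,t}$, I would push $E$ past $F$ via (S3), producing a term $t^{2n+2}F^{(n+1)}E$ (which matches the $F^{(n+1)}E$ summand since $t^{2n+2}=t^{2(n+1)}$) plus a correction proportional to $F^{(n)}(K-K^{-1})$. Second, in the term $t^{n-1}F^{(n-1)}(v^{1-n}K-v^{n-1}K^{-1})F/[n+1]_{v,t}(v-v^{-1})$, I would apply (S2) in the forms $KF=v^{-2}t^{2}FK$ and $K^{-1}F=v^{2}t^{-2}FK^{-1}$, and then collapse $F^{(n-1)}F=[n]_{v,t}F^{(n)}$.

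It then remains to add the two contributions to $F^{(n)}K$ and to $F^{(n)}K^{-1}$ and check that they equal $t^{n}v^{-n}F^{(n)}K/(v-v^{-1})$ and $-t^{n}v^{n}F^{(n)}K^{-1}/(v-v^{-1})$ respectively. This is where the main (though entirely mechanical) obstacle lies: one must carefully track the powers of $t$, repeatedly using $[n]_{v,t}=t^{n-1}[n]_v$ and $[n+1]_{v,t}=t^{n}[n+1]_v$, together with the fact that $t^4=1$ kills a stray $t^{-4}$ that appears in the $K^{-1}$ coefficient. After clearing the $t$'s, the matching of $v$-coefficients reduces to the two elementary identities
\[
1+v^{-1-n}[n]_v=v^{-n}[n+1]_v,\qquad 1+v^{n+1}[n]_v=v^{n}[n+1]_v,
\]
both of which are verified directly by multiplying through by $v-v^{-1}$ and expanding the quantum integers. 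This completes the induction.
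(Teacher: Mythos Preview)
Your proposal is correct and matches the paper's own approach: the paper simply states that the lemma follows from ``a standard induction,'' which is exactly the argument you outline. Your detailed tracking of the $t$-powers (including the use of $t^4=1$) and the two $v$-identities at the end are accurate and fill in precisely the computation the paper omits.
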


By applying Lemma \ref{lem9}, we have the following proposition.

\begin{prop}\label{prop9}
For any $d\in \mbb{N}$, there exist only two non-isomorphic $(d+1)$-dimensional simple highest weight $\U$-modules $\Lambda^{\pm}_d$.
More precisely, the modules $\Lambda^{\pm}_d$ are  spanned by vectors $\xi_0,\xi_1,\cdots, \xi_d$, as vector spaces, and
the action of $\U$ on  $\Lambda^{+}_d$ is given by
\begin{eqnarray}\label{eq21}
F\cdot \xi_r=t^r[r+1]_{v}\xi_{r+1}, \quad E\cdot \xi_r=t^{r-1}[d+1-r]_v\xi_{r-1},\quad K\cdot \xi_r=t^{2r}v^{d-2r}\xi_r,
\end{eqnarray}
while the action  on  $\Lambda^{-}_d$ is  given by
\begin{eqnarray}\label{eq21.1}
F\cdot \xi_r=t^r[r+1]_{v}\xi_{r+1}, \quad E\cdot \xi_r=-t^{r-1}[d+1-r]_v\xi_{r-1},\quad K\cdot \xi_r=-t^{2r}v^{d-2r}\xi_r.
\end{eqnarray}
\end{prop}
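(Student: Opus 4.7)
The plan is to reduce the classification to finding the possible highest weights, using Lemma \ref{lem9} as the main lever. First I would take any $(d+1)$-dimensional simple highest weight $\U$-module $V$ with highest weight vector $\xi_0$ satisfying $E\xi_0=0$ and $K\xi_0=\lambda\,\xi_0$ for some nonzero scalar $\lambda$, and set $\xi_r:=F^{(r)}\xi_0$. Induction on relation (S2) gives $K\xi_r=\lambda v^{-2r}t^{2r}\xi_r$, while $F\xi_r=[r+1]_{v,t}\xi_{r+1}=t^r[r+1]_v\xi_{r+1}$ is immediate from the definition of divided powers. Applying Lemma \ref{lem9} to $\xi_0$ and using $E\xi_0=0$ produces
\[
E\xi_r=t^{r-1}\,\frac{v^{1-r}\lambda-v^{r-1}\lambda^{-1}}{v-v^{-1}}\,\xi_{r-1}.
\]

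Since the eigenvalues $\lambda v^{-2r}t^{2r}$ are pairwise distinct in $\mbb{Q}[t](v)$, the nonzero $\xi_r$ are linearly independent, so $\dim V=d+1$ forces $\xi_0,\ldots,\xi_d\neq 0$ and $\xi_{d+1}=0$. Applying the $E$-formula to $\xi_{d+1}$ then yields $v^{-d}\lambda=v^d\lambda^{-1}$, i.e.\ $\lambda=\pm v^d$. Substituting $\lambda=v^d$ (respectively $\lambda=-v^d$) into the formulas above, together with the identity $v^{d+1-r}-v^{r-1-d}=(v-v^{-1})[d+1-r]_v$, recovers exactly \eqref{eq21} (resp.\ \eqref{eq21.1}). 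The two modules are non-isomorphic because their highest $K$-weights $\pm v^d$ differ.

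For existence, I would verify directly on the explicit basis that the actions \eqref{eq21} and \eqref{eq21.1} satisfy (S1)--(S3): (S1) is automatic, (S2) is a comparison of scalar factors, and (S3) reduces via the classical $\mathfrak{sl}_2$ identity $[r+1]_v[d-r]_v-[r]_v[d+1-r]_v=[d-2r]_v$ together with the observation $t^{2r}=t^{-2r}$, so that $(K-K^{-1})/(v-v^{-1})$ acts on $\xi_r$ by $t^{2r}[d-2r]_v$ (the overall sign in the $-$ case cancels between $K$ and $K^{-1}$). Simplicity of each module is clear because the one-dimensional $K$-eigenspaces are linked by $E$ and $F$ with nonzero coefficients $t^{r-1}[d+1-r]_v$ and $t^r[r+1]_v$ for $1\le r\le d$, so any nonzero vector generates $\xi_0$ under $E$ and then the whole module under $F$.

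The main obstacle is not conceptual but bookkeeping: the powers of $t$ produced by Lemma \ref{lem9}, by $[r+1]_{v,t}=t^r[r+1]_v$, and by $KF=v^{-2}t^2FK$ must be tracked consistently throughout. Once those align, the argument runs in parallel with the classical highest-weight classification for quantum $\mathfrak{sl}_2$, the only genuinely super-theoretic feature being that both square roots of $v^{2d}$ yield valid and non-isomorphic highest weights, accounting for the two families $\Lambda^{\pm}_d$.
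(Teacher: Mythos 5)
Your proposal is correct and follows essentially the same route as the paper, which derives the proposition precisely from Lemma \ref{lem9} applied to $\xi_r=F^{(r)}\xi_0$ together with the relations (S1)--(S3); you have simply written out the standard highest-weight bookkeeping (the $t$-powers, the constraint $\lambda=\pm v^d$ from $\xi_{d+1}=0$, and the verification of (S3) via $[r+1]_v[d-r]_v-[r]_v[d+1-r]_v=[d-2r]_v$) that the paper leaves implicit.
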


 \begin{rem}
   By rewriting the last identity in (\ref{eq21}) into $K\cdot \xi_r=t^dq^{d-2r}\xi_r$, Proposition \ref{prop9} is compatible with the classification of simple modules of
   $U_1\otimes_{\mathbb Q [t] (v)}\mathbb{C} (v)$ in ~\cite[Proposition 3.2]{CW12} and ~\cite[Theorem 3.1]{Zou98}
 \end{rem}

In Sections \ref{sec3} and \ref{sec4}, we will give a geometric categorification of $\Lambda_d^+$.
The module $\Lambda_d^-$ can be categorified similarly.
 {\it From now on, we write  $\Lambda_d$ for $\Lambda_d^+$ for simplicity.}
The module $\Lambda_d$ becomes  a $\mathbb{Z}_2$-graded $\U$-module by setting  the parity function $p$ to be $p(\xi_r)=1$ if $r$ is odd and $p(\xi_r)=0$ otherwise.
For any $\U$-modules $M$ and $N$, the $\U\otimes \U$-module structure on $M\otimes N$ is defined by
\begin{equation*}\label{eq25}
(a\otimes b)\cdot (m\otimes n)=(-1)^{p(b)p(m)}am\otimes bn,
\end{equation*}
for any homogenous element $b\in \U$ and $m\in M$.
Moreover, the $\U$-module structure on $M\otimes N$ is defined by
\begin{equation}\label{eq44}
a \cdot (m\otimes n)=\Delta(a)(m\otimes n), \quad \forall a\in \U, m\otimes n\in M\otimes N.
\end{equation}
For any $\mathbf{d}=(d_1, d_2, \cdots, d_m)\in \mathbb{N}^m$, let
\begin{equation}
\label{Ld}
\Lambda_{\mathbf{d}}=\Lambda_{d_1}\otimes \Lambda_{d_2}\otimes \cdots \otimes \Lambda_{d_m}.
\end{equation}
Let
\[
\A =\mbb Z[v^{\pm 1}, t]
\]
denote the subring in $\Q(v)[t]$ of Laurent polynomials. We denote by $_{\A}\!\U$ the $\A$-subalgebra of $\U$ generated by $E^{(n)}:=\frac{E^n}{[n]^!_{v,t}}$, $F^{(n)}$ and $K^{\pm 1}$ for $n\in \mbb N$. It is clear that the comultiplication $\Delta$ induces a comultiplication on $_{\A}\!\U$. 
By combining with (\ref{eq21}), we can define  the integral form $_{\A}\!\Lambda_{\bf d}$ of the module $\Lambda_{\mbf d}$.

\subsection{} 
\label{perversesheaf}
In this section, we recall notations and facts in  the theory of mixed perverse sheaves.
We refer to ~\cite[Chapter 8]{Lusztig93} and \cite{BBD82} for more details.

Let $k$ be an algebraically closed field of positive characteristic.
Let $l$ be a fixed prime number invertible in $k$, and $\bar{\mathbb{Q}}_l$ be the algebraic closure of the field of $l$-adic numbers.
Denote by $\mathcal{D}(X)=\mathcal{D}^b_c(X)$ the bounded derived category of
$\bar{\mathbb{Q}}_l$-constructible sheaves on  the algebraic variety $X$ over $k$.
Let $\mathcal{D}_m(X)$ be the full subcategory of $\mathcal{D}(X)$ consisting of all mixed complexes.
If two complexes $K$ and $L$  in $\D(X)$ are isomorphic, we write $K=L$.

Let ${\rm wt}(L)$ denote the weight of a pure complex $L$.
Let $[-]$, $(-)$, $\mbb D$ and $\otimes$ denote the shift functor, Tate twist functor, Verdier duality functor and tensor product functor, respectively.
To a morphism
$f: X \rightarrow Y$ of algebraic varieties over $k$,
we can associate  four of Grothendieck's six operations $f^*, f^!: \mathcal{D}(Y) \rightarrow \mathcal{D}(X)$ and $f_*, f_!: \mathcal{D}(X) \rightarrow \mathcal{D}(Y)$.
We recall some facts  which we will  use freely later.
\begin{itemize}
\item[(1)] Simple perverse sheaves are pure.

\item[(2)] Functors $(-)$ and $[-]$ commute with each other and with  all functors $f^*,f_!,f^!,f_*$.

\item[(3)] $\mbb{D} f_!=f_* \mbb{D},\quad \mbb{D} f^!=f^* \mbb{D},\quad \mbb{D}(L[n](m))=(\mbb{D}L)[-n](-m).$

\item[(4)] ${\rm wt}(L[n])={\rm wt}(L)+n,\ {\rm wt}(L(n))={\rm wt}(L)-2n, \ {\rm wt}(\mbb{D}L)=-{\rm wt}(L)$ for any pure complex $L$.

\item[(5)] If $f: X \rightarrow Y$ is smooth with connected fibers of equal dimension, then ${\rm wt}(f^*L)={\rm wt}(L)$ for any pure complex $L$.

\item[(6)]If $f: X \rightarrow Y$ is a proper morphism, then ${\rm wt}(f_!L)={\rm wt}(L)$ for any pure complex $L$.

\item[(7)]
$f_!(L\otimes f^* M) = f_!L\otimes M$,\quad $f^*(L\otimes M) = f^*L\otimes f^* M$.

\item[(8)] If the following square  is  cartesian and $f$ is a proper map
\[\xymatrix{ Z \ar[r]^{g'} \ar[d]_{f'} & X \ar[d]^{f}\\
X' \ar[r]^{g} & Y,}\]
then we have $g^*f_! L = f'_!g'^*L$ for any complex $L\in \D(X)$.

\end{itemize}

\subsection{}
\label{convolution}

Suppose that $X_1, X_2 $ and $X_3$ are three algebraic varieties over $k$.
Let $p_{ij}: X_1\times X_2 \times X_3 \to X_i \times X_j$ be the projection to the $(i, j)$-factor, for $(i, j)=(1, 2), (2,3), (1,3)$.
For any $L\in \mathcal D(X_1\times X_2)$ and $M\in \mathcal D(X_2\times X_3)$, we set
\begin{equation}
\label{convolution-a}
L\circ M=(p_{13})_! ( p_{12}^* L \otimes p_{23}^* M) \quad  \in \mathcal D(X_1\times X_3).
\end{equation}

\begin{lem}
\label{convolution-associative}
Assume the above set up, we have $(L\circ M) \circ N = L\circ (M\circ N)$, where
$N$ is  any complex in $\mathcal D(X_3\times X_4)$ and
$X_4$ is a fourth variety over $k$.
\end{lem}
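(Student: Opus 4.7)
The plan is to introduce a fourfold product that dominates both sides, and to show both iterated convolutions agree with a single ``triple convolution'' defined directly on it.

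Set $W = X_1\times X_2\times X_3\times X_4$, and for each pair $i<j$ denote by $q_{ij}: W\to X_i\times X_j$ the projection to the indicated factors. Also, for any triple $a<b<c$ drawn from $\{1,2,3,4\}$, let $\pi_{abc}: W\to X_a\times X_b\times X_c$ be the projection forgetting the remaining factor, and let $p^{abc}_{ij}: X_a\times X_b\times X_c\to X_i\times X_j$ denote the projection to the $(i,j)$-factor. I will show that both $(L\circ M)\circ N$ and $L\circ(M\circ N)$ are canonically isomorphic to
\[
Q := (q_{14})_!\bigl(q_{12}^*L\otimes q_{23}^*M\otimes q_{34}^*N\bigr).
\]

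First I rewrite the left-hand side as $Q$. By definition,
\[
(L\circ M)\circ N = (p^{134}_{14})_!\bigl((p^{134}_{13})^*(L\circ M)\otimes (p^{134}_{34})^*N\bigr).
\]
The square
\[
\xymatrix{
W \ar[r]^-{\pi_{123}} \ar[d]_-{\pi_{134}} & X_1\times X_2\times X_3 \ar[d]^-{p^{123}_{13}} \\
X_1\times X_3\times X_4 \ar[r]^-{p^{134}_{13}} & X_1\times X_3
}
\]
is cartesian, since each vertical (resp. horizontal) pair of arrows forgets the same factor $X_4$ (resp. $X_2$). Base change (property~(8), applied in the standard form for $f_!$ on constructible derived categories) gives
$(p^{134}_{13})^*(p^{123}_{13})_! = (\pi_{134})_!(\pi_{123})^*$. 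Applying this to $L\circ M$ and using compatibility of $(\pi_{123})^*$ with tensor product from (7), together with the identities $\pi_{123}\circ\text{(nothing)}=\pi_{123}$ and $p^{123}_{12}\circ\pi_{123}=q_{12}$, $p^{123}_{23}\circ\pi_{123}=q_{23}$, yields
\[
(p^{134}_{13})^*(L\circ M) = (\pi_{134})_!\bigl(q_{12}^*L\otimes q_{23}^*M\bigr).
\]
Substituting back and applying the projection formula from (7) to absorb $(p^{134}_{34})^*N$ inside $(\pi_{134})_!$ (using $(\pi_{134})^*(p^{134}_{34})^*N = q_{34}^*N$), I obtain
\[
(L\circ M)\circ N = (p^{134}_{14}\circ\pi_{134})_!\bigl(q_{12}^*L\otimes q_{23}^*M\otimes q_{34}^*N\bigr) = Q,
\]
since $p^{134}_{14}\circ\pi_{134}=q_{14}$.

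A perfectly symmetric argument, based on the cartesian square
\[
\xymatrix{
W \ar[r]^-{\pi_{234}} \ar[d]_-{\pi_{124}} & X_2\times X_3\times X_4 \ar[d]^-{p^{234}_{24}} \\
X_1\times X_2\times X_4 \ar[r]^-{p^{124}_{24}} & X_2\times X_4
}
\]
rewrites $L\circ(M\circ N)$ as the same object $Q$, completing the proof. The work is essentially mechanical, and the only obstacle is careful bookkeeping of which factors are being forgotten by each projection so that the relevant squares are verified to be cartesian; once the diagrams are set up, the result is an iterated application of base change (8), the projection formula and pullback-tensor compatibility (7). A minor technical point is that the projections $p^{abc}_{ij}$ are typically not proper, so base change is invoked in its standard form for $f_!$ on constructible derived categories (of which (8) is a special case).
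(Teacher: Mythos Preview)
Your proof is correct and is exactly the standard argument the paper alludes to when it writes ``The proof is standard and left to the reader.'' Your remark that property~(8) as stated in the paper assumes properness, while the base change identity $g^*f_! \cong f'_!g'^*$ holds for any cartesian square in the constructible derived category, is accurate and worth noting; otherwise the argument is complete as written.
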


The proof is standard and  left to the reader.

\section{A geometric categorification of $\U$}
\label{sec3}

\subsection{}

We fix a positive  integer  $d$.
We  write $\F_r$ for the Grassmannian of all dimension $r$ subspaces in $k^d$.
It is clear that $\F_r$ is empty unless $r$ subjects to $0\leq r\leq d$.
The group $G=GL(k^d)$ naturally acts on $\F_r$ from the left.
Let $G$ act on  $\F_{r}\times \F_{r'}$ diagonally.
By ~\cite[Section 1]{BLM90}, the $G$-orbits in $\F_r\times \F_{r'}$ are parametrized by
the set $\Theta_d(r, r')$ of all $2\times 2$ matrices $(a_{ij})$ such that $a_{11}+a_{12}=r$, $a_{11}+a_{21}=r'$ and $\sum_{i,j=1, 2} a_{ij}=d$.
In fact,  a bijection is defined by sending   a pair $(V, V')\in \F_r\times \F_{r'}$ to  the following  matrix.
\[
\left [
  \begin{array}{cc}
    |V\cap V'| & |V/V\cap V'|\\
   |V+V'/V| &  d-|V+V'|
  \end{array}
 \right ],\quad
\mbox{where $|V|=\dim V$.}
\]
We set
\begin{equation*}\label{eq35}
  \Theta_d=\sqcup_{r, r'} \Theta_d(r, r').
\end{equation*}
We define the following closed subvarieties in $\F_r\times \F_{r'}$ for appropriate  $r$ and $ r'$.
\begin{equation}
\label{F-a}
\begin{split}
\F_{r, r+a} &= \{ (V, V') \in \F_r \times \F_{r+a}| V\subset V' \}, \quad  \forall 0 \leq r, a, r+a \leq d;\\
\F_{r, r-a}  &= \{ (V, V') \in \F_r \times \F_{r-a}| V\supset V' \}, \quad \forall 0 \leq r, a , r-a \leq d.
\end{split}
\end{equation}
We denote
\begin{equation*}
\begin{split}
E_{r, r+a}&=  (\bar{\mbb Q}_l)_{\F_{r, r+a}}   [ a(d-(r+a)) ] \left (\frac{a(d-a)}{2} \right ) \quad  \in \mathcal D(\F_r\times \F_{r+a}),\\
F_{r, r-a}&=  (\bar{\mbb Q}_l)_{\F_{r, r-a}} [ a(r-a)](a(r-a))        \quad  \in \mathcal D(\F_r\times \F_{r-a}),\\
1_r&=  (\bar{ \mbb Q}_l)_{\F_{r, r}}                    \quad  \in \mathcal D(\F_r\times \F_r),
\end{split}
\end{equation*}
where $ a>0$ and  $(\bar{\mbb Q}_l)_{X_1}  \in \mathcal D(X)$ denotes the extension by zero of the constant sheaf
$(\bar{\mbb Q}_l)_{X_1} $  in $\mathcal D(X_1)$ for a given subvariety $X_1$ in $X$.
If the variety $\F_{r, r'}$ is empty, the associated complex is defined to be zero.

\begin{lem}
\label{sl-E}
$(a)$.  $\; E_{r, r+a}  E_{r+a, r+a+1}= \bigoplus_{j=0}^a E_{r, r+a+1} [a-2j](a-j)$.

$(b)$. $\; F_{r, r-a}  F_{r-a, r-a-1} = \bigoplus_{j=0}^a F_{r, r-a-1} [a-2j](a-j)$.
\end{lem}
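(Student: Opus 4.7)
The plan is to compute both convolutions by identifying them as pushforwards along $\mathbb{P}^{a}$-bundles and invoking the standard projective bundle formula. Part (a) and part (b) are completely parallel, so I would write (a) in detail and then note that (b) follows by the obvious adaptation (reversing inclusions).

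For (a), let $Z \subset \F_{r}\times \F_{r+a}\times \F_{r+a+1}$ be the closed subvariety
\[
Z = \{(V,V',V'') \mid V\subset V'\subset V'',\ \dim V=r,\ \dim V'=r+a,\ \dim V''=r+a+1\}.
\]
Because the three projections $p_{12}$, $p_{23}$, $p_{13}$ are smooth with connected fibers, the tensor product $p_{12}^{*}E_{r,r+a}\otimes p_{23}^{*}E_{r+a,r+a+1}$ is, up to shift and Tate twist, the constant sheaf on $Z$. A direct bookkeeping using the shift $a(d-(r+a))$ resp.\ $1\cdot(d-(r+a+1))$ and twists $a(d-a)/2$ resp.\ $(d-1)/2$ shows that
\[
p_{12}^{*}E_{r,r+a}\otimes p_{23}^{*}E_{r+a,r+a+1} \;=\; (\bar{\mbb Q}_{l})_{Z}\bigl[(a+1)(d-(r+a+1))+a\bigr]\bigl((a+1)(d-(a+1))/2+a\bigr).
\]

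The essential point is the geometry of $p_{13}\colon Z\to \F_{r}\times \F_{r+a+1}$. Its image is exactly $\F_{r,r+a+1}$, and over a point $(V,V'')\in \F_{r,r+a+1}$ the fiber is the Grassmannian of $a$-planes $V'$ with $V\subset V'\subset V''$, i.e.\ $\mathrm{Gr}(a, V''/V)\cong \mathbb{P}^{a}$. Moreover this is Zariski-locally trivial (one trivializes by choosing a local splitting of $V\subset V''$), so the standard projective bundle formula for $\ell$-adic cohomology gives
\[
(p_{13})_{!}(\bar{\mbb Q}_{l})_{Z} \;=\; \bigoplus_{j=0}^{a}(\bar{\mbb Q}_{l})_{\F_{r,r+a+1}}[-2j](-j).
\]
Combining this with the shift/twist computed above and comparing with the definition of $E_{r,r+a+1}$ yields precisely $\bigoplus_{j=0}^{a}E_{r,r+a+1}[a-2j](a-j)$, as claimed.

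Part (b) is obtained by the exact same argument with $Z' = \{(V,V',V'')\mid V\supset V'\supset V''\}$; the fiber of $p_{13}|_{Z'}$ over $(V,V'')$ is now $\mathrm{Gr}(a,V/V'')\cong \mathbb{P}^{a}$, and the shift/twist arithmetic is straightforward since the normalization of $F_{r,r-a}$ is symmetric in shift and twist. I do not expect a genuine obstacle: the only step requiring care is matching the shifts and Tate twists on both sides, which is a routine but slightly tedious arithmetic; the geometric input (identifying the fibers as $\mathbb{P}^{a}$) is immediate.
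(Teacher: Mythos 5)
Your proof is correct and follows essentially the same route as the paper: identify the support $S$ of $p_{12}^*E_{r,r+a}\otimes p_{23}^*E_{r+a,r+a+1}$ as the variety of flags $V\subset V'\subset V''$, observe that $p_{13}$ restricted to it is a $\mathbb{P}^a$-bundle onto $\F_{r,r+a+1}$, apply the projective bundle decomposition, and match shifts and Tate twists (your totals $(a+1)(d-r-a-1)+a$ and $(a+1)(d-a-1)/2+a$ agree with the paper's $a(d-r-a)+d-(r+a)-1$ and $a(d-a)/2+(d-1)/2$). The treatment of (b) by symmetry is likewise what the paper does.
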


\begin{proof}
The support of the complex
$p_{12}^* E_{r, r+a} \otimes p_{23}^* E_{r+a, r+a+1}$ is
\begin{equation*}
\label{eq5}
S=\{ (V, V', V'')\in \F_r \times \F_{r+a} \times \F_{r+a+1} | V\subset V' \subset V'' \}.
\end{equation*}
The restriction of $p_{13}$ to $S$ is a $\mbb P^a$-bundle. So we have
\begin{equation*}
\begin{split}
E_{r, r+a}&  E_{r+a, r+a+1}
=(p_{13})_! (p_{12}^* E_{r, r+a} \otimes p_{23}^* E_{r+a, r+a+1})\\
&=(p_{13})_! ( \bar{\mbb Q}_l)_S   [ a (d-r-a) + d-(r+a)-1]\left (\frac{a(d-a)}{2}+\frac{d-1}{2}\right )\\
&=\bigoplus_{j=0}^a (\bar{\mbb Q}_l)_{\F_{r, r+a+1}} [-2j](-j) [(a+1)(d-(r+a)) -1]\left(\frac{(a+1)(d-1-a)}{2}+a\right)\\
&=\bigoplus_{j=0}^a E_{r, r+a+1} [a-2j](a-j).
\end{split}
\end{equation*}

Similarly, the support of
$p_{12}^* F_{r, r-a} \otimes p_{23}^* F_{r-a, r-a-1}$
is
\[
S=\{ (V, V', V'')\in \F_r \times \F_{r-a} \times \F_{r-a-1} | V\supset V' \supset V'' \}.
\]
The restriction of $p_{13}$ to $S$ is again a $\mbb P^a$-bundle.
By a  similar argument as above, we have the second identity.
\end{proof}

\begin{lem}
\label{sl-relation}
$(a). \quad 1_r  1_{r'}  = \delta_{r, r'} 1_r$.
\begin{equation*}
\begin{split}
(b). \quad& E_{r, r+1}   1_{r'} =  \delta_{r+1, r'} E_{r, r+1}, \quad \ \quad
1_{r'}   E_{r, r+1} = \delta_{r', r} E_{r, r+1}.\\
 ( c). \quad& F_{r, r-1}  1_{r'}=  \delta_{r-1, r'} F_{r, r-1}, \quad \ \quad
1_{r'}  F_{r, r-1}=  \delta_{r', r} F_{r, r-1}.\\
(d). \quad& E_{r, r+1}  F_{r+1, r} \oplus \textstyle{\bigoplus_{0\leq j < 2r-d}}   1_r  [2r-2j-1-d] \left (2r-j-\frac{d+1}{2} \right )  \\
&\qquad= F_{r, r-1}  E_{r-1, r}(1) \oplus  \textstyle{\bigoplus_{ 0\leq j< d-2r}} 1_{r} [d -2j-1 -2r]\left (\frac{d-1}{2}-j \right ).\hspace{45pt}
\end{split}
\end{equation*}
\end{lem}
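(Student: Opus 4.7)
Parts (a), (b), (c) are to be obtained by direct inspection of supports: each of $1_r$, $E_{r,r+1}$, $F_{r,r-1}$ is a shifted/twisted constant sheaf on an explicit incidence subvariety, and the tensor product $p_{12}^*(-)\otimes p_{23}^*(-)$ in the convolution vanishes unless the middle factor is simultaneously compatible with both incidence conditions, which forces the stated Kronecker deltas; after $(p_{13})_!$ the result is immediate.

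For part (d), my plan is to apply base change (fact (8)) and the projection formula (fact (7)) to rewrite both sides as proper pushforwards of constant sheaves from smooth fiber products. Setting
\[
Z_1=\{(V,V',V'')\in \F_r\times \F_{r+1}\times \F_r : V,V''\subset V'\},\qquad Z_2=\{(V,V',V'')\in \F_r\times \F_{r-1}\times \F_r : V'\subset V\cap V''\},
\]
and $\pi_i:Z_i\to\F_r\times\F_r$ the projection to the outer factors, a direct shift/twist count gives
\[
E_{r,r+1}\circ F_{r+1,r}=(\pi_1)_!(\bar{\mbb Q}_l)_{Z_1}[d-1]\bigl(\tfrac{d-1}{2}+r\bigr),\qquad F_{r,r-1}\circ E_{r-1,r}(1)=(\pi_2)_!(\bar{\mbb Q}_l)_{Z_2}[d-1]\bigl(\tfrac{d-1}{2}+r\bigr),
\]
so the Tate twist $(1)$ on the right is exactly what is needed to align the total shifts and twists on the two sides.

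The heart of the argument lies in comparing these two pushforwards. Both $\pi_i$ have image in $Y=\{(V,V'')\in\F_r\times\F_r:\dim(V\cap V'')\ge r-1\}$ and are isomorphisms over the open stratum $Y\setminus\Delta$, where $V+V''$ (resp.\ $V\cap V''$) is the unique $(r+1)$-dimensional (resp.\ $(r-1)$-dimensional) choice for $V'$; over the diagonal $\Delta\cong\F_r$, the maps $\pi_1$ and $\pi_2$ are $\mbb P^{d-r-1}$- and $\mbb P^{r-1}$-bundles respectively. Since each $\pi_i$ is proper with smooth source, the decomposition theorem guarantees that $(\pi_i)_!(\bar{\mbb Q}_l)_{Z_i}$ is semisimple and pure; agreement over the dense open $Y\setminus\Delta$ pins down a common $\IC(Y)$-summand on both sides (same shift and twist, multiplicity one), and every remaining summand must be supported on $\Delta$, hence a shifted/twisted $1_r$. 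By proper base change and the projective bundle formula, the restrictions of the two pushforwards to $\Delta$ are $\bigoplus_{j=0}^{d-r-1}1_\Delta[-2j](-j)$ and $\bigoplus_{j=0}^{r-1}1_\Delta[-2j](-j)$ respectively; subtracting the (common) restriction of $\IC(Y)$ and applying the global shift $[d-1]$ and twist $(\tfrac{d-1}{2}+r)$, the two residuals differ precisely by the leftover range of $j$. Re-indexing shows the surviving extras are $\bigoplus_{0\le j<d-2r}1_r[d-2j-1-2r](\tfrac{d-1}{2}-j)$ when $d\ge 2r$ and $\bigoplus_{0\le j<2r-d}1_r[2r-2j-1-d](2r-j-\tfrac{d+1}{2})$ when $d<2r$, which is exactly what (d) demands.

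The main obstacle will be upgrading this Grothendieck-group comparison into an actual isomorphism of complexes. For this I rely on the semisimplicity supplied by the decomposition theorem, together with the observation that the common $\IC(Y)$-summand carries the same shift, twist, and multiplicity on both sides; once that cancels, the surviving pieces are direct sums of shifted/twisted $1_r$'s supported on $\Delta$, and their multiplicities are determined by their $K$-theory classes, so the leftover-range computation above upgrades to the stated isomorphism of complexes.
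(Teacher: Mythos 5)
Your proposal is correct, and for parts (a)--(c) it coincides with the paper's support-and-isomorphism analysis. For part (d) the underlying geometry you use is the same as the paper's (the convolution varieties $Z_1,Z_2$ are the paper's supports $S$; the stratification of the image into the diagonal $\Delta=\F_{r,r}$ and the off-diagonal locus; the $\mbb P^{d-r-1}$- and $\mbb P^{r-1}$-bundle structure over $\Delta$), but the bookkeeping is organized differently. The paper decomposes the \emph{source}: it splits $S$ into the closed piece $S_1$ lying over $\Delta$ and its open complement $S_2$, pushes each forward separately, and observes that the two off-diagonal contributions are literally the same extension-by-zero constant sheaf, since $S_2'=\{(V,V''): |(V+V'')/V|=1=|(V+V'')/V''|\}$ equals $S_2''=\{(V,V''): |V/(V\cap V'')|=1=|V''/(V\cap V'')|\}$; no decomposition theorem, no $\IC(Y)$, and no Grothendieck-group subtraction enters, only the projective-bundle formula of \cite[8.1.6]{Lusztig93}. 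You instead invoke the decomposition theorem, cancel a common $\IC(Y)$ summand pinned down over the dense open stratum, and recover the $\Delta$-supported residue by proper base change plus a split-Grothendieck-group comparison; this is heavier machinery but has the virtue of sidestepping the splitting of the open-closed triangle that the paper's direct source decomposition implicitly uses, and the semisimplicity-plus-K-class step at the end is exactly what legitimizes upgrading to an isomorphism of complexes. Two small points to tighten: the assertion that every $\Delta$-supported summand is a shifted/twisted $1_r$ needs a word (e.g. $G$-equivariance of the whole construction, $\Delta$ being a single $G$-orbit with connected stabilizers), although your K-class subtraction actually does not require it, since it only needs the residues to be semisimple complexes supported on $\Delta$; and the degenerate cases $r=0$, $r=d$, where $Y\setminus\Delta$ is empty and there is no $\IC(Y)$ summand at all, should be noted separately (there both convolutions are entirely supported on $\Delta$ and the identity is immediate from the projective-bundle computation).
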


\begin{proof}
Let us show that $1_r 1_{r'}=  \delta_{r, r'} 1_r$.  Assume that $r=r'$. As before, let
$p_{ij}: \F_r\times \F_r \times \F_r \to \F_r\times \F_r $ be the projection to the $(i, j)$-factor.
By the definition of $1_r$, the support of $p_{12}^* 1_r \otimes p_{23}^* 1_r$ is the variety
$S=\{(V, V', V'') \in \F_r\times \F_r \times \F_r | V=V'=V''\}$, i.e., the diagonal of the variety $\F_r\times \F_r\times \F_r$.
So the image of $S$ under $p_{ij}$ is exactly $\F_{r,r}$ for $(i, j)=(1,2), (2,3), (1,3)$.  Moreover, the restriction of $p_{ij}$ to $S$ is an isomorphism.
Thus the restriction of $1_r 1_r$ to $\F_{r, r}$ is the constant sheaf.  Therefore, we have $1_r 1_r= 1_r$.
If $r\neq r'$, then the support of $p_{12}^* 1_r\otimes p_{23}^* 1_r$ is empty. So we have $1_r 1_{r'} = 0$.

Next, let us show that $E_{r, r+1} 1_{r'} =  \delta_{r+1, r'} E_{r, r+1}$. Assume that $r'=r+1$.
The support of $p_{12}^* E_{r, r+1} \otimes p_{23}^* 1_r$ is
$S=\{ (V, V', V'')\in \F_r \times\F_{r+1} \times \F_{r+1} | V\subset V', V'=V''\}$.
By definition, the restriction of $p_{12}^* E_{r, r+1} \otimes p_{23}^* 1_r$  to $S$ is
$(\bar{\mbb Q}_l)_S [ d-(r+1)](\frac{d-1}{2})$. Note that the restriction of $p_{13}$ to $S$ is again an isomorphism, and the image of $p_{13} $ is $\F_{r, r+1}$. Therefore we have
$E_{r, r+1} 1_{r+1}=E_{r, r+1}$. For the case of $r'\neq r$, the identity holds by definitions. One may show similar identities in the lemma in a similar way.

Finally, let us show that the last identity in the lemma.
Let us compute the complex $E_{r, r+1} F_{r+1, r}$.
The support of $p_{12}^* E_{r, r+1} \otimes p_{23}^* F_{r+1, r}$ is
\[
S=\{ (V, V', V'')\in \F_r\times \F_{r+1} \times \F_{r}| V \subset V' \supset V''\}.
\]
Let $S_1=\{ (V, V', V'') \in S| V=V''\}$ and $S_2=S \backslash S_1$. Then
\[S_2\simeq S_2'\overset{\mrm{def.}}{=}\{(V, V'')\in \F_r \times \F_r|  |(V+V'')/V|=1= |(V+V'')/V''|\}.\]
Observe that the restriction $p_{13}'$ of $p_{13}$ to $S_1$ is a fiber bundle of fiber isomorphic to  the projective space $\mbb P^{d-r-1}$,
while the restriction $p_{13}''$ of $p_{13}$ to $S_2$ is an isomorphism. Further, the image of $p_{13}'$ is $\F_{r, r}$,
and the image of $p_{13}''$ is $S_2'$.
The restriction of $p_{12}^* E_{r, r+1} \otimes p_{23}^* F_{r+1, r}$ to $S$ is
\[
(\bar{\mbb Q}_l)_S [ d-(r+1) ](\frac{d-1}{2})\otimes (\bar{\mbb Q}_l)_S [r](r) =(\bar{\mbb Q}_l)_S  [d-1]\left(\frac{d-1}{2}+r\right).
\]
So
\begin{equation*}
\begin{split}
E_{r, r+1} F_{r+1, r}
&=(p_{13})_! (p_{12}^* E_{r, r+1} \otimes p_{23}^* F_{r+1, r})
=(p_{13})_! ( \bar{\mbb Q}_l)_S  [d-1]\left(\frac{d-1}{2}+r\right)\\
&=(p_{13}')_! ( \bar{\mbb Q}_l)_{S_1}[d-1](\frac{d-1}{2}+r)\oplus (p_{13}'')_! ( \bar{\mbb Q}_l)_{S_2}[d-1]\left(\frac{d-1}{2}+r\right)\\
&=\large ( \oplus_{j=0}^{d-r-1} (\bar{\mbb Q}_l)_{\F_{r,r}} [-2j](-j) \oplus (\bar{\mbb Q}_l)_{S_2'} \large ) [d-1]\left(\frac{d-1}{2}+r\right),
\end{split}
\end{equation*}
where the third equation is due to ~\cite[8.1.6]{Lusztig93}.

Similarly, we compute $F_{r, r-1}  E_{r-1, r}$ and get
\[
F_{r, r-1}  E_{r-1, r}(1)=
(\oplus_{j=0}^{r-1} (\bar{\mbb Q}_l)_{\F_{r, r}} [-2j](-j)\oplus (\bar{\mbb Q}_l)_{S_2''} ) [d-1]\left(\frac{d-1}{2}+r\right),
\]
where $S_2''=\{ (V, V'')\in \F_r\times \F_r | |V/(V\cap V'')|=1=|V''/ (V\cap V'')| \}$.
Observe that $S_2'=S_2''$ and
\[
(\oplus_{j=0}^{d-r-1} 1_r     \oplus \oplus_{d-r\leq j < r} 1_r) [-2j](-j)
=(\oplus_{j=0}^{r-1} 1_r  \oplus
\oplus_{ r \leq j < d-r} 1_r ) [-2j](-j).
\]
We have the last identity in the lemma.
The lemma follows.
\end{proof}

For any  $A=(a_{ij})_{1\leq i, j \leq 2}$, we set
\[
\{A\}= IC(O_A)[-r(A)](-r(A)/2), \quad \forall A\in \Theta_d,
\]
where $O_A$ is the corresponding $G$-orbit of $A$, $IC(O_A)$ is the intersection cohomology complex attached to the closure of $O_A$ (\cite{BBD82}), and $r(A)=(a_{11}+a_{12})(a_{21}+a_{22})$ is the dimension of the image of $O_A$ under the first projection.

\begin{lem}
\label{sl-simple}
$(a). \quad
E_{r-a, r} 1_r F_{r, r-b}(n_1)=
 \left \{
  \begin{array}{cc}
    r-a-b & b\\
   a  &  d-r
  \end{array}
 \right\},
 \quad \forall d \leq (r-a)+ (r-b).$
\begin{align*}
\begin{split}
(b). &\quad
 F_{r+b, r} 1_r E_{r, r+a}(n_2)=
 \left \{
  \begin{array}{cc}
    r & b\\
   a  &  d-r-a-b
  \end{array}
 \right\}, \quad \forall d \geq (r+a)+ (r+b).\\
 (c).&\quad
E_{r-a, r} 1_r F_{r, r-b}=F_{d-r+b, d-r} 1_{d-r} E_{d-r, d-r+a}(ab), \quad \quad  \mbox{if}\ \; d=(r-a)+(r-b),
\end{split}
\end{align*}
where $n_1=-\frac{1}{2}(a(r-a)+b(r-b))$ and $n_2=-\frac{1}{2}(ar+br)$.
\end{lem}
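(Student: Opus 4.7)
The plan is to prove parts (a) and (b) by identifying each convolution with the proper pushforward of a constant sheaf along a small resolution of singularities, and then to derive (c) by combining the two results when the two hypotheses coincide. For (a), using $E_{r-a,r}\circ 1_r=E_{r-a,r}$ and $1_r\circ F_{r,r-b}=F_{r,r-b}$ from Lemma \ref{sl-relation}, the convolution reduces to $E_{r-a,r}\circ F_{r,r-b}$, which by the definitions unwinds to
\[
(p_{13})_!(\bar{\mbb Q}_l)_S\bigl[a(d-r)+b(r-b)\bigr]\bigl(a(d-a)/2+b(r-b)\bigr),
\]
where $S=\{(V,V',V'')\in \F_{r-a}\times \F_r\times \F_{r-b}\mid V\subset V',\ V''\subset V'\}$ and $p_{13}(V,V',V'')=(V,V'')$. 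Since the existence of $V'\in \F_r$ with $V+V''\subset V'$ is equivalent to $|V+V''|\leq r$, the image of $p_{13}|_S$ is exactly $\overline{O_A}$ for the matrix $A$ in (a), and $S$ is a smooth incidence variety over $\F_r$.

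The key geometric step is to show that $p_{13}|_S$ is a small proper resolution under the hypothesis $d\leq (r-a)+(r-b)$. Let $O_{A_j}\subset \overline{O_A}$ denote the orbit with $|V+V''|=r-j$ for $j=0,\dots,\min(a,b)$; a fiber of $p_{13}$ over a point of $O_{A_j}$ is the Grassmannian of $j$-subspaces in $k^d/(V+V'')$, of dimension $j(d-r)$. A direct stabilizer computation gives $\dim O_{A_j}=\dim O_A-j(j+d-a-b)$ and $\dim S=\dim O_A=a(r-a)+b(r-b)+r(d-r)$, so $p_{13}|_S$ is birational. The smallness inequality $2j(d-r)<\dim O_A-\dim O_{A_j}$ for $j\geq 1$ simplifies to $d-2r+a+b<j$, which holds because the hypothesis forces $d-2r+a+b\leq 0<1\leq j$. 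Applying the standard fact that the pushforward of the constant sheaf along a small proper resolution is the IC sheaf of the image (in the weight-zero normalization of $\IC$), one gets $(p_{13})_!(\bar{\mbb Q}_l)_S=\IC(O_A)[-\dim O_A](-\dim O_A/2)$. Combining with the above shift and twist and then applying $(n_1)$, a routine expansion using $r(A)=(r-a)(d-r+a)$ verifies that the net shift is $-r(A)$ and the net twist is $-r(A)/2$, so the convolution equals $\{A\}$. Part (b) is proved analogously with $S'=\{(V,V',V'')\in \F_{r+b}\times \F_r\times \F_{r+a}\mid V'\subset V,\ V'\subset V''\}$; here the fiber over the orbit with $|V\cap V''|=r+j$ is $\Gr_r(V\cap V'')$ of dimension $rj$, and the smallness inequality reduces to $2r<d-a-b+j$, which holds under $d\geq (r+a)+(r+b)$.

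Part (c) will then follow by combining (a) and (b). Under the equality $d=(r-a)+(r-b)$ one has $d-r+b=r-a$ and $d-r+a=r-b$, so both convolutions in (c) live on the common space $\F_{r-a}\times \F_{r-b}$, and substituting $r\mapsto d-r$ in the matrix of (b) produces the same matrix $A=\begin{pmatrix}r-a-b & b\\ a & r-a-b\end{pmatrix}$ as in (a). Both sides of (c) are therefore Tate twists of the common $\{A\}$, and the equality reduces to the arithmetic identity $n_1+ab=-\tfrac12(a+b)(r-a-b)$ (the right-hand side being $n_2$ evaluated at $d-r$), which is immediate after expansion. The main obstacle I anticipate is the smallness verification in (a) and (b): one must identify the orbit stratification of $\overline{O_A}$, perform the stabilizer-based dimension counts for each stratum, and propagate the half-integer Tate twists coming from the definitions of $E_{r-a,r}$ and $F_{r,r-b}$ correctly through the convolution formalism.
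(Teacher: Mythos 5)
Your proposal is correct and follows essentially the same route as the paper: both identify $E_{r-a,r}\circ F_{r,r-b}$ with the pushforward along $p_{13}$ of a shifted, twisted constant sheaf on the incidence variety $S$, verify that $p_{13}|_S$ is a small resolution of $\overline{O_{A_0}}$ under the stated inequality (your stratum-by-stratum count $2j(d-r)<j(j+d-a-b)$ is the same computation the paper packages via the BLM dimension formula), and deduce (b) symmetrically and (c) by matching the two matrices and twists. The only cosmetic difference is that you pin down the Tate twist $n_1$ by explicit shift/twist bookkeeping, whereas the paper fixes it by the weight-zero argument $\mathrm{wt}(\{A_0\})=0$; both are fine.
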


\begin{proof}
We  prove the first equation.
The support of the complex $p_{12}^* E_{r-a, r}\otimes p_{23}^* F_{r, r-b}$ is
\[
S=\{ (V, V'', V')\in \F_{r-a} \times \F_r \times \F_{r-b} |  V \subset V'' \supset V'\}.
\]
By definition, we have
\begin{equation}
E_{r-a, r} F_{r, r-b}=(p_{13})_! (\bar{\mbb Q}_l)_S   [ a(d-r)+ b(r-b)]\left (\frac{a(d-a)}{2}+b(r-b) \right ).
\label{sl-simple-A}
\end{equation}
Consider the restriction of $p_{13}$ to $S$.
The image of $S$ under $p_{13}$ consists of  the pairs $(V, V')\in \F_{r-a}\times \F_{r-b}$ such that
$|V+ V'| \leq r$.
Thus we have
\[
r-a-b \leq |V\cap V'| \leq \mrm{min} \{ r-a, r-b\}.
\]
Recall from ~\cite[2.3]{BLM90} that
\begin{equation}
\label{sl-simple-dimension}
\begin{split}
d&(A)-r(A)
=a_{11}a_{12}+a_{21}a_{12}+a_{21} a_{22}\\
&=(r-a-b -|V\cap V'|) (d-r+b+|V\cap V'| ) + b |V\cap V'|  + a(d-r+b).
\end{split}
\end{equation}
In particular,
\begin{equation}
\label{sl-maximaldim}
d(A)-r(A)=a(d-r)+b(r-b), \quad \mrm{if} \quad |V\cap V'|=r-a-b.
\end{equation}
From (\ref{sl-simple-dimension}), we see that
$p_{13}(S)$ is the orbit closure of the $G$-orbit  whose associated matrix is
\[
A_0=
 \left [
  \begin{array}{cc}
    r-a-b & b\\
   a  &  d-r
  \end{array}
 \right].
\]
We claim that
\begin{equation}
\mbox{
The restriction of $p_{13}$ to $S$ is a small resolution.}
\label{sl-simple-B}
\end{equation}
Recall that smallness means that the following two conditions are satisfied.
\begin{itemize}
\item[(a).]  $ 2 |p_{13}^{-1} (x)| \leq  d(A_0)- d(A)$, for any $x\in O_A \subseteq O_{A_0}$.
\item[(b).] The equality holds if and only if $A=A_0$.
\end{itemize}
We show that $p_{13}$ satisfies (a).
Given any pair $(V, V')$ in $p_{13}(S)$, the dimension of the fiber $p_{13}^{-1} (V, V')$ is
\begin{equation}
\label{sl-simple-fiber}
| p_{13}^{-1} (V, V') |= | Gr(d- |V+V'|, r-|V+V'|)  |= (r-|V+V'|) (d-r).
\end{equation}
Since $r(A_0)=r(A)$, we have
 \[
 2 |p_{13}^{-1} (x)| -( d(A_0)- d(A))=
 2 |p_{13}^{-1} (x)| -( d(A_0)-r(A_0))- ( d(A)-r(A)).
 \]
By (\ref{sl-simple-dimension}) and (\ref{sl-simple-fiber}), we have
\begin{equation}
\label{sl-simple-semismall}
2 |p_{13}^{-1} (x)| -( d(A_0)- d(A))=
(r-a-b -|V\cap V'| ) (-d+r +|V\cap V'|) \leq 0.
\end{equation}
This shows (a).
The inequality (\ref{sl-simple-semismall})  becomes equality if and only if $|V\cap V'| =r-a-b$. So (b) holds for $p_{13}$.
It is clear that the restriction of $p_{13}$ to $p_{13}^{-1} (O_{A_0})$ is an isomorphism. The claim follows.

By  (\ref{sl-simple-A}) , (\ref{sl-maximaldim})  and (\ref{sl-simple-B}),  we have
$E_{r-a, r} F_{r, r-b}= \{A_0\}$ up to a Tate twist. Since ${\rm wt}(\{A_0\})=0$, we have $n_1=-\frac{1}{2}(a(r-a)+b(r-b))$
by checking  the weight of $E_{r-a, r} F_{r, r-b}$.
The first equation in the lemma follows.
The second equation can be shown similarly. The third one follows from the first two equations.
\end{proof}

\subsection{}
\label{sec3.4}
Consider the following complex
\begin{align}
\label{complex}
L_1 \circ L_2 \circ \cdots \circ L_m, \quad m\in \mbb N,
\end{align}
where the  $L_i$'s   are either  $E_{r, r+1}$,  $F_{r, r-1}$, or  $1_r$.
Assume that $L_i\in \mathcal D(\F_{r_i}\times \F_{r_{i+1}})$ for $i=1,\cdots, m$.
Let $s_{ij}: \prod_{k=1}^{m+1} \F_{r_k} \to \F_{r_i}\times \F_{r_j}$ be the projection to $(i,j)$-factor.
By applying \ref{perversesheaf} (7) and (8), we get
\[
L_1 \circ L_2 \circ \cdots \circ L_m=(s_{1,m+1})_! (\otimes_{i=1}^m s_{i, i+1}^* (L_i)).
\]
Observe that $s_{1, m+1}$ is proper, and the restriction of the complex $\otimes_{i=1}^m s_{i, i+1}^* (L_i)$
to its support, which is smooth and irreducible, is a constant sheaf with a shift and a Tate twist. By the decomposition theorem (\cite{BBD82}), we see that the complex (\ref{complex}) is  semisimple.

\subsection{}
\label{sec3.5}

Let $\mathcal Q_d^{r, r'}$ be the full subcategory of $\mathcal D(\F_r\times \F_{r'})$ consisting of   semisimple complexes, whose simple constitutes are direct summands of
the  complex  (\ref{complex}) up to shifts and twists.

Let $\mathbf{Q}_d^{r,r'}$ be the split Grothendieck group of $\mathcal{Q}_d^{r,r'}$.
More precisely, $\mathbf{Q}_d^{r,r'}$ is the abelian group generated by the isomorphism classes of  objects in $\mathcal{Q}_d^{r,r'}$ and  subject to the following relation.
\begin{equation}\label{eq19}
  \langle C\oplus C'\rangle=\langle C\rangle+\langle C'\rangle, \quad \forall C,C'\in \mathcal{Q}_d^{r,r'}.
\end{equation}
Let $\mathbf{Q}_d=\oplus_{r,r'\in \mbb Z_{\geq 0}}\mathbf{Q}_d^{r,r'}$
and $\tilde \A=\mbb{Z}[v^{\pm 1},  \tau^{\pm 1}]$,  where $ \tau$ is  an indeterminate.
We define an $\tilde \A$-module structure on $\mathbf{Q}_d$ as follows.
\begin{equation}\label{eq7}
  v \cdot \langle C\rangle=\langle C[1](\frac{1}{2})\rangle, \quad  \tau\cdot\langle C\rangle=\langle C(\frac{1}{2})\rangle, \quad \forall \langle C\rangle\in \mathbf{Q}_d.
\end{equation}
By the property of the shift and Tate twist functors, this action is well-defined.
Recall that
\[
\A =\mathbb Z[v^{\pm 1}, t].
\]
There is an obvious ring homomorphism $\tilde \A \to \A$ by sending $\tau $ to $t$.
Let
\[{}_{\mathcal{A}}\! \mbf S_{v, t}(2,d)=
\A \otimes_{\tilde \A}
\mathbf{Q}_d.
\]
By the $ \tau$-action in (\ref{eq7}), we have
\begin{equation}\label{eq4}
1\otimes  \langle C(2)\rangle  =1\otimes  \tau^4 \langle C\rangle = t^4\otimes  \langle C\rangle =
 1\otimes \langle C\rangle,\quad  \forall \langle C\rangle  \in \mbf Q_d.
\end{equation}
 Let
\begin{equation}
\label{S}
\mbf S_{v,t}(2,d)=\mbb{Q}[t](v) \otimes_{\mathcal{A}} {}_{\mathcal{A}}\! \mbf S_{v, t}(2,d).
\end{equation}

By (\ref{eq19}), (\ref{eq4}) and Lemma ~\ref{sl-simple},  the convolution product $``\circ"$ in (\ref{convolution-a})
descends to  a bilinear map on  ${}_{\mathcal{A}}\! \mbf S_{v, t}(2,d)$:
$$  \circ: \, {}_{\mathcal{A}}\! \mbf S_{v, t}(2,d) \times \, {}_{\mathcal{A}}\! \mbf S_{v, t}(2,d)\to \, {}_{\mathcal{A}}\!\mbf S_{v, t}(2,d).$$
It is associative due to Lemma ~\ref{convolution-associative}.
Together with ``$\circ$'', the space ${}_{\mathcal{A}}\!\mbf S_{v, t}(2,d)$ becomes an associative algebra over $\mathcal A$.
By an abuse of notation, we  write $C$ instead of $1\otimes \langle C\rangle$ for elements in ${}_{\mathcal{A}}\! \mbf S_{v, t}(2,d)$.

\begin{lem}
\label{lem1}
The following identities hold in ${}_{\mathcal{A}}\! \mbf S_{v, t}(2,d)$.

$(a)$.  $\; E_{r, r+a}  E_{r+a, r+a+1}= [a+1]_{v, t} E_{r, r+a+1}$.

$(b)$. $\; F_{r, r-a}  F_{r-a, r-a-1}= [a+1]_{v, t}F_{r, r-a-1}$.
\end{lem}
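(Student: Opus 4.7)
The plan is to read off both identities directly from Lemma~\ref{sl-E} by passing to the Grothendieck group and bookkeeping the shifts and Tate twists. There is essentially no geometric content beyond Lemma~\ref{sl-E}; the only thing to verify is that the combinatorial sum coming from the decomposition assembles into a $(v,t)$-quantum integer.

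First, I would record the $\tilde{\mathcal A}$-action dictionary. From (\ref{eq7}), iteration gives
\[
\langle C[n](m)\rangle \;=\; v^n\,\tau^{\,2m-n}\,\langle C\rangle,
\]
and after the base change $\tilde{\mathcal A}\to\mathcal A$ sending $\tau\mapsto t$ this becomes $\langle C[n](m)\rangle = v^n t^{2m-n}\langle C\rangle$ in ${}_{\mathcal A}\!\mathbf S_{v,t}(2,d)$.

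Next, I would apply this to Lemma~\ref{sl-E}(a). For each summand $E_{r,r+a+1}[a-2j](a-j)$ in the decomposition of $E_{r,r+a}\circ E_{r+a,r+a+1}$, the exponent of $t$ is
\[
2(a-j)-(a-2j)\;=\;a,
\]
which (crucially) is independent of $j$. Hence in ${}_{\mathcal A}\!\mathbf S_{v,t}(2,d)$ one obtains
\[
E_{r,r+a}\circ E_{r+a,r+a+1}\;=\;t^{a}\Bigl(\sum_{j=0}^{a}v^{a-2j}\Bigr)E_{r,r+a+1}\;=\;t^{a}[a+1]_{v}\,E_{r,r+a+1}.
\]
Using the identity $[a+1]_{v,t}=t^{a}[a+1]_{v}$ recorded in Section~\ref{preliminary}, the right-hand side equals $[a+1]_{v,t}\,E_{r,r+a+1}$, which proves (a). Part (b) is identical word for word, using Lemma~\ref{sl-E}(b) in place of Lemma~\ref{sl-E}(a).

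There is no real obstacle here: the only point that needs a moment's care is to check that the $t$-exponent $2(a-j)-(a-2j)$ does not depend on~$j$, which is what allows the sum over $j$ to factor out as a pure $v$-quantum integer $[a+1]_v$ and then combine with $t^{a}$ to yield $[a+1]_{v,t}$. Everything else is a transcription of Lemma~\ref{sl-E} through the definition of the Grothendieck group in Section~\ref{sec3.5}.
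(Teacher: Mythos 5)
Your proposal is correct and follows the paper's own argument: the paper likewise deduces both identities from Lemma~\ref{sl-E} by applying the $\tilde{\mathcal A}$-action of (\ref{eq7}), obtaining $E_{r,r+a}\circ E_{r+a,r+a+1}=\sum_{j=0}^a v^{a-2j}t^{a}E_{r,r+a+1}=[a+1]_{v,t}E_{r,r+a+1}$, with (b) done in the same way. Your explicit check that the $t$-exponent $2(a-j)-(a-2j)=a$ is independent of $j$ is exactly the bookkeeping implicit in that computation.
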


\begin{proof}
  By Lemma \ref{sl-E} and $\A$-action on ${}_{\mathcal{A}}\! \mbf S_{v, t}(2,d)$ defined above, we have
  $$ E_{r, r+a} \circ E_{r+a, r+a+1}=\sum_{j=0}^av^{a-2j} t^{a} E_{r, r+a+1}=[a+1]_{v, t} E_{r, r+a+1}.$$
  The second identity can be proved similarly.
\end{proof}

Let $K_r=1_r[d-2r](\frac{d}{2})$, $K_r^{-1}=1_r[2r-d](-\frac{d}{2})$ and
\begin{eqnarray*}\label{eq24}
  E=\sum_{r=0}^{d-1} E_{r,r+1}, \quad F=\sum_{r= 1}^d F_{r,r-1},\quad K=\sum_{r=0}^dK_r,\quad
  K^{-1}=\sum_{r=0}^dK_r^{-1}.
\end{eqnarray*}
By Lemmas \ref{sl-relation}, \ref{sl-simple}, \ref{lem1} and using $t^2=-1$, we have

\begin{thm}
\label{prop15}
There exists a unique surjective algebra homomorphism $\chi: \U \rightarrow \mbf S_{v,t}(2,d)$ by sending the generators in $\U$ to the respective elements in $\mbf S_{v, t}(2, d)$.
Moreover, it induces a surjective  $\A$-algebra homomorphism  from the integral form $_{\A}\!\U$ of $\U$ to the algebra $_{\A}\! \mbf S_{v, t}(2,d)$.
\end{thm}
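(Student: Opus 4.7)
My plan is to define $\chi$ on generators by $\chi(E)=E$, $\chi(F)=F$, $\chi(K^{\pm1})=K^{\pm 1}$ (with uniqueness automatic, since $\U$ is generated by these symbols over $\mbb{Q}[t](v)$), to verify that the three defining relations (S1)--(S3) hold in $\mbf S_{v,t}(2,d)$, and finally to check surjectivity by producing each BLM basis element $\{A\}$ as an integral product of images of the divided-power generators $E^{(n)}, F^{(n)}, K^{\pm 1}$ of ${}_{\A}\!\U$.

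Relation (S1) is immediate from Lemma \ref{sl-relation}(a) once we sum over $r$. For (S2), Lemma \ref{sl-relation}(b) forces $K\circ E$ and $E\circ K$ to be supported on the $E_{r,r+1}$-components, with the only difference being whether $E_{r,r+1}$ is paired with $K_r$ on the left or with $K_{r+1}$ on the right. Translating the shifts and twists in $K_r=1_r[d-2r](d/2)$ into scalars via (\ref{eq7}) (so that $[n](m)$ becomes $v^n\tau^{2m-n}$) and setting $\tau=t$ gives $K_r=v^{d-2r}t^{2r}\cdot 1_r$, hence $KE=v^{d-2r}t^{2r}E_{r,r+1}$ and $EK=v^{d-2r-2}t^{2r+2}E_{r,r+1}$ on the $r$-th component, so $KE=v^2t^{-2}EK$ componentwise. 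The identity $KF=v^{-2}t^2FK$ is entirely analogous.

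The main work lies in (S3), and I expect this careful shift/twist bookkeeping (where $t^2=-1$ repeatedly plays a role) to be the principal obstacle. I would translate Lemma \ref{sl-relation}(d) into ${}_{\A}\!\mbf S_{v,t}(2,d)$ by converting each $[n](m)$ on $1_r$ into the scalar $v^n\tau^{2m-n}$. A direct computation shows that each of the two complementary geometric sums in that lemma collapses to $t^{2r}[d-2r]_v\cdot 1_r$ (using $[-n]_v=-[n]_v$ to merge the $2r>d$ and $2r<d$ cases, the boundary $2r=d$ being $0$), and the Tate twist $(1)$ on $F_{r,r-1}\circ E_{r-1,r}(1)$ contributes the factor $t^2=-1$. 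Consequently Lemma \ref{sl-relation}(d) on the $1_r$-summand reduces to
\begin{equation*}
E_{r,r+1}\circ F_{r+1,r} + F_{r,r-1}\circ E_{r-1,r} = t^{2r}[d-2r]_v\cdot 1_r.
\end{equation*}
On the other hand, $K_r=v^{d-2r}t^{2r}\cdot 1_r$ and $K_r^{-1}=v^{2r-d}t^{-2r}\cdot 1_r$, and using $t^{-2r}=t^{2r}$ the $1_r$-component of $(K-K^{-1})/(v-v^{-1})$ equals $t^{2r}[d-2r]_v\cdot 1_r$. Since $t^2=-1$ gives $EF-t^2FE=EF+FE$, relation (S3) follows.

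For surjectivity, iterating Lemma \ref{lem1} yields $E^n=[n]^!_{v,t}\sum_r E_{r,r+n}$, hence $E^{(n)}=\sum_r E_{r,r+n}$ lies in the image, and similarly $F^{(n)}=\sum_r F_{r,r-n}$. Using Lemma \ref{sl-simple}(a)--(c) and combining with polynomial expressions in $K^{\pm 1}$ (as in the $\mathfrak{sl}(2)$-case of \cite{BLM90}) allows one to isolate each basis element $\{A\}$ for $A\in\Theta_d$ integrally from suitable products $E^{(a)}\circ F^{(b)}$ decomposed by weight. Since the $\{A\}$'s form an $\A$-basis of ${}_{\A}\!\mbf S_{v,t}(2,d)$ and the images of the integral generators $E^{(n)}, F^{(n)}, K^{\pm 1}$ of ${}_{\A}\!\U$ already lie in ${}_{\A}\!\mbf S_{v,t}(2,d)$, the homomorphism $\chi$ restricts to a surjection on the integral forms.
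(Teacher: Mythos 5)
Your verification of (S1)--(S3) is correct and is essentially the paper's own argument: the paper proves Theorem \ref{prop15} by citing Lemmas \ref{sl-relation}, \ref{sl-simple}, \ref{lem1} together with $t^2=-1$, and your translation $[n](m)\mapsto v^n\tau^{2m-n}$, the identification $K_r=v^{d-2r}t^{2r}1_r$, and the reduction of Lemma \ref{sl-relation}(d) to $E_{r,r+1}\circ F_{r+1,r}+F_{r,r-1}\circ E_{r-1,r}=t^{2r}[d-2r]_v 1_r$ are exactly the computations being invoked. The same goes for $\chi(E^{(n)})=\sum_r E_{r,r+n}$ via Lemma \ref{lem1}, and the surjectivity over $\mbb Q[t](v)$ (isolate the $1_r$ by interpolation in $K$, then use Lemma \ref{sl-simple} to reach every $\{A\}$, as in the proof of Proposition \ref{prop13}).

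The one step I do not accept as written is the integral ("Moreover") clause: you propose to "isolate each basis element $\{A\}$ integrally" by "polynomial expressions in $K^{\pm1}$". Over $\A=\mbb Z[v^{\pm1},t]$ this device fails: projecting onto a single weight component means Lagrange interpolation on the eigenvalues $v^{d-2r}t^{2r}$ of $K$, whose denominators (Vandermonde factors, essentially quantum integers) are not units in $\A$; already for $d=3$ the elements $1$, $K^{\pm1}$ only yield $[2]_v1_1$ modulo what one gets from $1_0,1_3$, not $1_1$ itself. The idempotents $1_r$ must instead be produced from divided-power products: e.g. $\chi(E^{(d)}F^{(d)})=1_0$ and $\chi(F^{(d)}E^{(d)})=1_d$ on the nose, and then one extracts the remaining $1_r$ by an induction using the (unitriangular, unit-leading-coefficient) expansions of monomials such as $\chi(E^{(a)})\chi(F^{(b)})$ in the basis $\{A\}$ furnished by Lemma \ref{sl-simple} -- equivalently, by the divided-power commutation identities that realize the Gaussian-binomial elements in $K$ inside ${}_{\A}\!\U$, which is the mechanism behind the $\mathfrak{sl}(2)$ statement you attribute to \cite{BLM90} (it is really Du's argument \cite{D95}). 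To be fair, the paper's own one-line proof supplies no more detail on this point than you do; but as a self-contained argument your integral step, taken literally, is the place that would break.
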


\subsection{}
\label{S_v}
Let $\mbf S_v(2,d)$ be the $q$-Schur algebra associated to $\mathfrak{sl}(2)$. By \cite{BLM90} and ~\cite[1.3]{D95}, $\mbf S_{v,t}(2,d)$ is isomorphic to
$\mbb{Q}[t](v) \otimes_{\mbb{Q}(v)} \mbf S_v(2,d)$.
We define a $\mbb{Q}[t](v)$-linear map
\begin{equation}
\label{psi}
\psi_{d,d+2}:  \mbf S_{v,t}(2,d+2) \rightarrow \mbf S_{v,t}(2,d)
\end{equation}
 by
\begin{equation}
\label{eq33}
\{A\}\mapsto
\left\{
\begin{array}{ll}
  \{A-I_{2\times 2}\},& \quad {\rm if}\ A-I_{2\times 2}\in \Theta_d,\vspace{6pt}\\
  0,&\quad {\rm otherwise},
\end{array}
\right.
\end{equation}
where $I_{2\times 2}$ is the  identity matrix of rank 2.

\begin{prop}\label{prop13}
  For any $d\in \mbb{Z}_{> 0}$, $\psi_{d,d+2}$ in (\ref{psi})  is a surjective algebra homomorphism.
\end{prop}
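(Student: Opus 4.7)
The plan is to reduce the statement to the corresponding fact for the classical $q$-Schur algebra $\mbf S_v(2,d)$, using the isomorphism $\mbf S_{v,t}(2,d) \cong \mathbb Q[t](v)\otimes_{\mathbb Q(v)}\mbf S_v(2,d)$ recalled in \S\ref{S_v}. Surjectivity is transparent from the definition: any basis element $\{A'\}$ of $\mbf S_{v,t}(2,d)$ equals $\psi_{d,d+2}(\{A'+I_{2\times 2}\})$ by (\ref{eq33}), and such $\{A'\}$'s span $\mbf S_{v,t}(2,d)$ over $\mathbb Q[t](v)$.

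For the algebra homomorphism property, I would invoke the classical fact (\cite{BLM90}, cf.~\cite[Ch.~23]{Lusztig93}) that the analogous ``subtract $I_{2\times 2}$'' projection $\mbf S_v(2,d+2)\to \mbf S_v(2,d)$ on the BLM basis is a surjective algebra homomorphism. After extending scalars to $\mathbb Q[t](v)$, this gives a $\mathbb Q[t](v)$-algebra map. The remaining task is to check that $\psi_{d,d+2}$ coincides with this map under the isomorphism of \S\ref{S_v}. Since the basis $\{A\}$ here is defined by the same IC normalization $\{A\}=\mrm{IC}(O_A)[-r(A)](-r(A)/2)$ as in the classical setting, and the assignment $A\mapsto A-I_{2\times 2}$ is purely combinatorial on matrices, this identification is immediate.

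As an alternative, self-contained approach, one verifies the homomorphism property on generators. By Theorem \ref{prop15}, $\mbf S_{v,t}(2,d+2)$ is generated over $\mathbb Q[t](v)$ by $E_{r,r+1}, F_{r,r-1}, 1_r$, so it suffices to show
\[
\psi_{d,d+2}(g\circ\{A\}) = \psi_{d,d+2}(g)\circ\psi_{d,d+2}(\{A\})
\]
for a generator $g$ and a basis element $\{A\}$, and symmetrically on the right. The product $g\circ\{A\}$ is computed geometrically using the convolution (\ref{convolution-a}); by the decomposition theorem (cf.~\S\ref{sec3.4}) it expands as $\sum_i c_i\{A_i\}$, where each $A_i$ is obtained from $A$ by moving a single box between adjacent rows or columns, and $c_i\in\mathbb Z[v^{\pm 1},t]$ depends only on certain $q$-binomials in the off-diagonal entries of $A$ and its neighbors. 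Since this combinatorics commutes with $A\mapsto A-I_{2\times 2}$, the required identity reduces to matching coefficients summand by summand.

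I expect the principal technical point to be the bookkeeping of the Tate-twist contributions, which carry explicit $t$-powers in the coefficients $c_i$. These depend only on off-diagonal data, which is unchanged under subtracting $I_{2\times 2}$, so they will match on both sides. Likewise, the vanishing of a summand when some entry of $A_i-I_{2\times 2}$ becomes negative corresponds exactly to the vanishing on the right-hand side for the same combinatorial reason, so no additional case analysis is needed.
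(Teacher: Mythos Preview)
Your approach differs from the paper's. The paper proceeds by first computing $\psi_{d,d+2}$ on the generators: since each of $E_{r,r+1}$, $F_{r,r-1}$, $1_r$ equals a single $\{A\}$ up to an explicit power of $t$ (the underlying orbit being smooth and closed), one reads off $\psi_{d,d+2}(E_{r,r+1})=tE_{r-1,r}$, $\psi_{d,d+2}(F_{r,r-1})=tF_{r-1,r-2}$, $\psi_{d,d+2}(1_r)=1_{r-1}$ directly from (\ref{eq33}). It then checks that these assignments respect the commutator relation of Lemma \ref{sl-relation}(d) (the other relations being obvious), invoking the presentation of $\mbf S_{v,t}(2,d+2)$ implicit in Lemmas \ref{sl-relation}--\ref{sl-simple}. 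This is elementary and self-contained within the paper; what it buys is that one never has to analyze products $g\circ\{A\}$ for general $A$.

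Your first approach is valid and economical once the classical result is in hand, but the step you call ``immediate'' is not quite: to identify $\psi_{d,d+2}$ with the scalar extension of the BLM transfer map you need that the latter sends the \emph{canonical} basis element $\{A\}$ to $\{A-I_{2\times 2}\}$ (or $0$). This is true---the IC stalks along $\overline{O_A}$ depend only on the off-diagonal entries of $A$---but it is a substantive fact about singularities of orbit closures, not mere combinatorics on matrices. Your second approach has the same hidden cost: the multiplication formulas whose coefficients are $q$-binomials in off-diagonal entries are for the \emph{standard} basis $[A]$ of \cite{BLM90}, not directly for the IC basis $\{A\}$ used here (and orbit closures in $\F_r\times\F_{r'}$ are not all smooth even for $n=2$), so passing between them again requires the Kazhdan--Lusztig--type invariance under $A\mapsto A-I_{2\times 2}$. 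The paper's route sidesteps this by touching only generators, each supported on a single smooth closed orbit.
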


\begin{proof}
By  Lemma \ref{sl-simple}, $\mbf S_{v,t}(2,d)$ is generated by $1_r, E_{r,r+1}$ and $F_{r,r-1}$, $\forall 0\leq r\leq d$
and subjects to the relations given by Lemma \ref{sl-relation}. 
 By (\ref{eq33}), we have
$$\psi_{d,d+2}(E_{r,r+1})=tE_{r-1,r},\quad \psi_{d,d+2}(F_{r,r-1})=tF_{r-1,r-2}, \quad{\rm and}\quad \psi_{d,d+2}(1_r)=1_{r-1}.$$
$\psi_{d,d+2}$ is surjective, since    $1_r, E_{r,r+1}$ and $F_{r,r-1}$ are algebraic generators of $\mbf S_{v,t}(2,d)$.
The rest is to show  that $\psi_{d,d+2}$ is compatible with the defining relations of $\mbf S_{v,t}(2,d+2)$.
 We only check the relation $E_{r,r+1} F_{r+1,r}-t^2F_{r,r-1}  E_{r-1,r}=t^{2r}[d +2-2r]_v1_r$ and the rest relations can be checked similarly. By the definition of $K_r$, we have
$\psi_{d,d+2}(K_r)=v^{d+2-2r}t^{2r}\psi_{d,d+2}(1_r)=v^{d+2-2r}t^{2r}1_{r-1}=t^2K_{r-1}$. So
$$\psi_{d,d+2}(E_{r,r+1} F_{r+1,r}-t^2F_{r,r-1}  E_{r-1,r})=\psi_{d,d+2}(\frac{K_r-K_r^{-1}}{v-v^{-1}})=t^2\frac{K_{r-1}-K_{r-1}^{-1}}{v-v^{-1}}.$$
On the other hand, we have
\begin{eqnarray*}
& &\psi_{d,d+2}(E_{r,r+1}) \psi_{d,d+2}(F_{r+1,r})-t^2\psi_{d,d+2}(F_{r,r-1}) \psi_{d,d+2} (E_{r-1,r})\\
&=&t^2(E_{r-1,r} F_{r,r-1}-t^2F_{r-1,r-2}  E_{r-2,r-1})=t^2\frac{K_{r-1}-K_{r-1}^{-1}}{v-v^{-1}}.
\end{eqnarray*}
The proposition follows.
\end{proof}

\section{A geometric categorification of $\U$-modules}
\label{sec4}

\subsection{}

Let $\mathcal{C}_{r, r'}$ be the category of triangulated functors from $\D(\F_{r'})$ to $\D(\F_r)$. Consider the following diagram
\[\xymatrix{ \F_r &\F_{r}\times \F_{r'} \ar[l]_-{p_1} \ar[r]^-{p_2} & \F_{r'},
}\]
where $p_1$ and $p_2$ are projections to the first and second components, respectively.
Define a functor
\[\Psi_{r,r'}: \D(\F_r \times \F_{r'}) \rightarrow \mathcal{C}_{r,r'}\]
by $\Psi_{r,r'}(L)=p_{1!}(L \otimes p_2^*(-))$ for any object $L$ in $ \D(\F_r \times \F_{r'})$.

\begin{prop}\label{prop4}
  $\Psi_{r'',r}(L \circ M)=\Psi_{r'',r'}(L)\Psi_{r',r}(M)$ for any objects $L$ in $\D(\F_{r''} \times \F_{r'})$ and $M$ in $\D(\F_{r'} \times \F_r)$.
\end{prop}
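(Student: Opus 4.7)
The plan is a direct calculation: expand both sides using the definitions of $\circ$ and $\Psi$, and verify equality by repeatedly applying the projection formula (item (7) of \ref{perversesheaf}) and proper base change (item (8)) so that both expressions collapse to the same triple-integral on $\F_{r''}\times \F_{r'}\times \F_r$.

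Set $X=\F_{r''}\times \F_{r'}\times \F_r$, write $\pi_{ij}$ for the projection of $X$ onto its $(i,j)$-factor, and $q_1,q_2,q_3$ for the projections to the single factors, so that $p_i^{a,b}\circ \pi_{ij}=q_i$ whenever the composition is defined. Fix $N\in\D(\F_r)$. For the left-hand side, first rewrite
\[
(L\circ M)\otimes (p_2^{r'',r})^*N
=(\pi_{13})_!\bigl(\pi_{12}^*L\otimes \pi_{23}^*M\bigr)\otimes (p_2^{r'',r})^*N
\]
and use the projection formula to absorb $(p_2^{r'',r})^*N$ inside, giving $(\pi_{13})_!(\pi_{12}^*L\otimes \pi_{23}^*M\otimes q_3^*N)$. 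Pushing forward by $p_1^{r'',r}$ and composing with $\pi_{13}$ produces $q_1$, so
\[
\Psi_{r'',r}(L\circ M)(N)=q_{1!}\bigl(\pi_{12}^*L\otimes \pi_{23}^*M\otimes q_3^*N\bigr).
\]

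For the right-hand side, the key step is to identify $(p_2^{r'',r'})^*\,\Psi_{r',r}(M)(N)$ using the cartesian square
\[
\xymatrix{
X \ar[r]^-{\pi_{23}} \ar[d]_{\pi_{12}} & \F_{r'}\times\F_r \ar[d]^{p_1^{r',r}}\\
\F_{r''}\times \F_{r'} \ar[r]^-{p_2^{r'',r'}} & \F_{r'}.
}
\]
Proper base change yields $(p_2^{r'',r'})^*(p_1^{r',r})_!=\pi_{12!}\,\pi_{23}^*$, so that
\[
(p_2^{r'',r'})^*\,\Psi_{r',r}(M)(N)=\pi_{12!}\bigl(\pi_{23}^*M\otimes q_3^*N\bigr),
\]
using $p_2^{r',r}\circ \pi_{23}=q_3$. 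A second application of the projection formula then gives
\[
L\otimes \pi_{12!}(\pi_{23}^*M\otimes q_3^*N)
=\pi_{12!}\bigl(\pi_{12}^*L\otimes \pi_{23}^*M\otimes q_3^*N\bigr),
\]
and pushing forward by $p_1^{r'',r'}$ yields $q_{1!}$ of the same triple tensor product as the left-hand side.

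Both sides therefore equal $q_{1!}(\pi_{12}^*L\otimes \pi_{23}^*M\otimes q_3^*N)$, proving the identity on objects; naturality of all the identities used ensures that this equality upgrades to an isomorphism of functors. The only delicate step is the base change, which requires $p_1^{r',r}$ (or equivalently $\pi_{12}$) to be proper; this is clear because the Grassmannians $\F_r$ are proper over $k$, so every projection from a product of Grassmannians is proper. No associativity argument for $\circ$ is needed here, and the fact that $p_1$ is proper makes the distinction between $p_!$ and $p_*$ irrelevant at the points we invoke it.
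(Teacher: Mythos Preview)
Your argument is correct. The paper does not actually prove this proposition but defers to \cite[Proposition~7.2]{Li10}; your direct verification via the projection formula and proper base change is the standard argument one expects behind that citation, so in substance the approaches coincide.
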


This is a special case of Proposition 7.2 in \cite{Li10}.

Consider the following diagram
\[\xymatrix{\F_r &\F_{r,r+a} \ar[l]_{p} \ar[r]^{p'} & \F_{r+a},
}\]
where $\F_{r, r+a}$ is defined in (\ref{F-a}) and $p$, $p'$ are projections.
For any $0 \leq r,  r+a \leq d$ and $a>0$, we define
\begin{equation}\label{eq31}
  \begin{split}
    \mrk{K}_r&=Id[d-2r] \left (\frac{d}{2} \right ):\D(\F_r) \rightarrow \D(\F_r),\\
    \mrk{E}_{r, r+a}&=p_!p'^*[a(d-a-r)]\left (\frac{a(d-a)}{2} \right ): \D(\F_{r+a}) \rightarrow \D(\F_{r}),\\
     \mrk{F}_{r+a,r}&=p'_!p^*[ar](ar): \D(\F_{r}) \rightarrow \D(\F_{r+a}).
  \end{split}
\end{equation}

\begin{lem}\label{lem4}
  For any $0 \leq r,  r+a \leq d$ and $a>0$, we have
  \begin{eqnarray*}
\Psi_{r,r+a}(E_{r,r+a})=\mrk{E}_{r,r+a},\;
\Psi_{r,r-a}(F_{r,r-a})=\mrk{F}_{r,r-a}, \;
{\rm and}\ \Psi_{r, r} (K_r ) = \mrk K_r.
  \end{eqnarray*}
\end{lem}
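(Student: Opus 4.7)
The plan is to verify each of the three equalities by unwinding the definition of $\Psi_{r,r'}$, writing the complexes $E_{r,r+a}$, $F_{r,r-a}$, $1_r$ as pushforwards of constant sheaves along the obvious closed embeddings, and then applying the projection formula together with base change. The shifts and Tate twists are taken along for the ride and will match by inspection.

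In detail, I would first handle $K_r$, since it is the cleanest case. Let $\Delta\colon \F_r \hookrightarrow \F_r\times\F_r$ be the diagonal, so that $p_1\circ\Delta = p_2\circ\Delta = \mathrm{id}$ and $1_r = \Delta_!(\bar{\mbb Q}_l)_{\F_r}$. Then by the projection formula,
\[
1_r\otimes p_2^*M \;=\; \Delta_!(\bar{\mbb Q}_l)\otimes p_2^*M \;=\; \Delta_!\bigl(\Delta^*p_2^*M\bigr) \;=\; \Delta_!\,M,
\]
and applying $p_{1!}$ yields $M$. Inserting the shift $[d-2r]$ and twist $(d/2)$ coming from the definition of $K_r$ gives $\Psi_{r,r}(K_r)(M)=M[d-2r](d/2)=\mathfrak{K}_r(M)$.

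Next I would treat $E_{r,r+a}$ in the same way. Let $i\colon \F_{r,r+a}\hookrightarrow \F_r\times\F_{r+a}$ be the closed embedding, so $p = p_1\circ i$ and $p'=p_2\circ i$ are the projections used to define $\mathfrak{E}_{r,r+a}$. Since $E_{r,r+a} = i_!(\bar{\mbb Q}_l)_{\F_{r,r+a}}[a(d-a-r)](\tfrac{a(d-a)}{2})$, the projection formula gives
\[
E_{r,r+a}\otimes p_2^*M \;=\; i_!\bigl(i^*p_2^*M\bigr)[a(d-a-r)]\bigl(\tfrac{a(d-a)}{2}\bigr) \;=\; i_!\bigl(p'^*M\bigr)[a(d-a-r)]\bigl(\tfrac{a(d-a)}{2}\bigr).
\]
Applying $p_{1!}$ and using $p_{1!}\circ i_! = (p_1\circ i)_! = p_!$ produces exactly $\mathfrak{E}_{r,r+a}(M)$. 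The case of $F_{r,r-a}$ is identical, with $\F_{r,r-a}$ and the roles of the two projections from it playing the analogous part, and the shift/twist $[a r](ar)$ (here the relevant parameter is $r-a$ replaced by $r$ in the formula, but the bookkeeping is routine).

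I do not expect any real obstacle: the only thing to be careful about is that the shifts and Tate twists attached to $E_{r,r+a}$ and $F_{r,r-a}$ in Section~\ref{sec3} agree on the nose with those used in \eqref{eq31} defining $\mathfrak{E}_{r,r+a}$ and $\mathfrak{F}_{r+a,r}$, which is a direct comparison. No smoothness, purity, or decomposition-theorem input is needed; the whole argument is a one-line application of projection formula plus the factorization of the projections through the incidence variety.
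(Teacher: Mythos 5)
Your argument is correct and is essentially the paper's own proof: both reduce each identity to the claim $p_{1!}(\iota_!(\bar{\mbb Q}_l)\otimes p_2^*(-))=p_!p'^*(-)$ via the projection formula and the factorization of the projections through the incidence (or diagonal) subvariety, with the shifts and Tate twists matching by inspection. The only cosmetic difference is that you write out all three cases (the paper does $E_{r,r+a}$ and declares the rest similar), and the ``base change'' mentioned in your plan is never actually needed.
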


\begin{proof}
We show that $ \Psi_{r,r+a}(E_{r,r+a})=\mrk{E}_{r,r+a}$. The other identities can be proved similarly.
We notice that the shift and Tate twist are the same in the complex $E_{r,r+a}$ and in the functor $\mrk{E}_{r,r+a}$, respectively.
So it is enough to show that $p_!p'^*(C)=p_{1!}(\iota_!(\bar{\mathbb{Q}}_l)_{\F_{r,r+a}} \otimes p_2^*(C))$ for any object $C$ in $\D(\F_{r+a})$, where $p,p', p_1$ and $p_2$ are the obvious maps in the following commutative diagram and $\iota$ is the closed embedding.
\[\xymatrix{\F_r & \F_{r,r+a} \ar[l]_{p} \ar[r]^{p'} \ar[d]^{\iota}  & \F_{r+a} \\
& \F_r \times \F_{r+a}. \ar[lu]^{p_1} \ar[ru]_{p_2}&
}\]
By the projection formula \ref{perversesheaf} (7) and the commutativity of the diagram, we have
\begin{eqnarray*}
  p_{1!}(\iota_!(\bar{\mathbb{Q}}_l)_{\F_{r,r+a}} \otimes p_2^*(C))=p_{1!}\iota_!((\bar{\mathbb{Q}}_l)_{\F_{r,r+a}} \otimes \iota^*p_2^*(C))
  =p_{1!}\iota_!\iota^*p_2^*(C))=p_!p'^*(C).
\end{eqnarray*}
The lemma follows.
\end{proof}

By Proposition  \ref{prop4} and Lemma \ref{lem4}, we can transport results on the complexes $E$, $F$ and $K$ to the corresponding  functors $\mrk E$, $\mrk F$, and $\mrk K$.
In particular, we have

\begin{lem}
\label{lem3}
$(a)$.  $\;\mrk{E}_{r, r+a}\mrk{E}_{r+a, r+a+1} = \bigoplus_{j=0}^a \mrk{E}_{r, r+a+1} [a-2j](a-j)$.
\begin{equation*}
  \begin{split}
(b). \; \mrk{F}_{r, r-a}\mrk{F}_{r-a, r-a-1}&= \oplus_{j=0}^a \mrk{F}_{r, r-a-1} [a-2j](a-j).\\
(c).\; \mrk{E}_{r, r+1}\mrk{F}_{r+1, r} \oplus \oplus&_{0\leq j < 2r-d}{\rm Id}\  [2r-2j-1-d] \left (2r-j-\frac{d+1}{2} \right )  = \hspace{43pt}\\
\mrk{F}_{r, r-1}&\mrk{E}_{r-1, r} (1) \oplus \oplus_{ 0\leq j< d-2r} {\rm Id}\  [d -2j-1 -2r] \left (\frac{d-1}{2}-j \right ).
\end{split}
\end{equation*}
\end{lem}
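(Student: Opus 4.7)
The plan is to deduce all three identities by applying the functor $\Psi$ to the corresponding equalities in the sheaf-theoretic setting (Lemma \ref{sl-E} and Lemma \ref{sl-relation}(d)) and then invoking Proposition \ref{prop4} to convert convolution products into functor compositions. The translation dictionary between sheaves and functors is supplied by Lemma \ref{lem4}, which identifies $\Psi_{r,r+a}(E_{r,r+a}) = \mathfrak{E}_{r,r+a}$, $\Psi_{r,r-a}(F_{r,r-a}) = \mathfrak{F}_{r,r-a}$, and $\Psi_{r,r}(1_r) = \mathrm{Id}$; the last identity follows because the support of $1_r$ is the diagonal in $\F_r\times \F_r$ and both projections restrict to isomorphisms.

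For part (a), I would apply $\Psi_{r,r+a+1}$ to the equation of Lemma \ref{sl-E}(a). By Proposition \ref{prop4}, the left-hand side becomes $\Psi_{r,r+a}(E_{r,r+a})\,\Psi_{r+a,r+a+1}(E_{r+a,r+a+1}) = \mathfrak{E}_{r,r+a}\,\mathfrak{E}_{r+a,r+a+1}$. The right-hand side is handled by the observation that $\Psi$ is built from the operations $p_{1!}$, $p_2^*$, and tensoring with a fixed complex, all of which commute with direct sums and with the shift and Tate twist functors; consequently $\Psi_{r,r+a+1}$ of the sum $\bigoplus_{j=0}^a E_{r,r+a+1}[a-2j](a-j)$ is exactly $\bigoplus_{j=0}^a \mathfrak{E}_{r,r+a+1}[a-2j](a-j)$. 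Part (b) follows by the same argument applied to Lemma \ref{sl-E}(b).

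For part (c), apply $\Psi_{r,r}$ to the identity in Lemma \ref{sl-relation}(d). Proposition \ref{prop4} again converts each convolution into composition: $\Psi_{r,r}(E_{r,r+1}\circ F_{r+1,r}) = \mathfrak{E}_{r,r+1}\,\mathfrak{F}_{r+1,r}$ and $\Psi_{r,r}(F_{r,r-1}\circ E_{r-1,r}) = \mathfrak{F}_{r,r-1}\,\mathfrak{E}_{r-1,r}$. The direct summands of the form $1_r$ with shifts and twists translate under $\Psi_{r,r}$ into copies of $\mathrm{Id}$ with the same shifts and twists, yielding the stated identity verbatim.

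The only point requiring any care is the formal verification that $\Psi_{r,r'}$ commutes with shift, Tate twist, and finite direct sum — but this is immediate from the defining formula $\Psi_{r,r'}(L) = p_{1!}(L\otimes p_2^*(-))$, since $p_{1!}$ and $p_2^*$ are triangulated and the relevant shift and Tate twist are applied to $L$ before tensoring. No genuine obstacle arises: once Proposition \ref{prop4} and Lemma \ref{lem4} are in place, Lemma \ref{lem3} is essentially a mechanical transport of the sheaf-level identities established earlier.
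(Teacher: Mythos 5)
Your proposal is correct and is precisely the paper's own argument: the paper obtains Lemma \ref{lem3} by transporting Lemma \ref{sl-E} and Lemma \ref{sl-relation}(d) through $\Psi$ via Proposition \ref{prop4} and Lemma \ref{lem4}. Your added remarks (that $\Psi_{r,r}(1_r)=\mathrm{Id}$ and that $\Psi$ commutes with shifts, Tate twists and direct sums) are exactly the routine verifications the paper leaves implicit.
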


Let $$\mrk{K}=\oplus_{r=0}^d \mrk{K}_r,\quad  \mrk{E}^{(a)}=\oplus_{r=0}^{d} \mrk{E}_{r,r+a},\quad \mrk{F}^{(a)}=\oplus_{r=0}^d \mrk{F}_{r,r-a}.$$
These are endofunctors on $\oplus_{r=0}^d \D(\F_r)$.

\subsection{} 

We fix a sequence ${\bf d}=(d_1,d_2, \cdots, d_m)$ of integers such that $\sum_{l=1}^m d_l=d$. To such a sequence ${\bf d}$,
we associate a fixed  partial flag in $k^d$ of the form
\begin{equation}\label{eq38}
0=V_0 \subset V_1 \subset \cdots \subset V_m=k^d , \quad |V_l/V_{l-1}|=d_l,\ \forall\ l.
\end{equation}
Denote by $P_{\bf d}$ the parabolic subgroup of $G=GL(k^d)$ which fixes all subspace $V_l, \forall\  l=1,\cdots, m$ in the fixed flag (\ref{eq38}).
Let $\mathcal{Q}_{\bf d}^r$ be the full subcategory of $\D(\F_r)$ consisting of $P_{\bf d}$-equivariant semisimple complexes and $\mathcal{Q}_{\bf d}=\oplus_r \mathcal{Q}_{\bf d}^r$.
It is clear that  the functors $\mrk K^{\pm 1}$, $\mrk E^{(a)} $ and $\mrk F^{(a)}$ induce functors
\[
\mrk K^{\pm 1}, \mrk E^{(a)}, \mrk F^{(a)}: \mathcal Q_{\bf d}\to \mathcal Q_{\bf d}.
\]
Let $\mathbf Q_{\bf d}$ be the split Grothendieck group of  $\mathcal{Q}_{\bf d}$.  It admits a left $\tilde \A$-module structure similar to (\ref{eq7}).
We set
\[
\mbf V_{\bf d} =\A\otimes_{\tilde \A} {\bf Q}_{\bf d} \quad \mbox{and}\quad
\mathsf{V}_{\bf d}=\Q[t^{\pm 1}](v)\otimes_{\A} \mathbf{V}_{\bf d}.
\]
The functors $\mrk  K^{\pm 1}$, $\mrk E^{(a)}$, and $\mrk F^{(a)}$ descend to linear maps
\[
\mbf K^{\pm 1}, \mbf E^{(a)}, \mbf F^{(a)}:  \mathsf{V}_{\bf d} \to \mathsf{V}_{\bf d}.
\]
We also use the same for the respective linear maps on $\mbf V_{\bf d}$.
By Lemma ~\ref{lem3}, we have

\begin{lem}
 The quadruple $( \mathsf{V}_{\bf d}, \mbf K^{\pm 1}, \mbf E,\mbf F)$  defines a $\U$-module.
 It is a $_{\A}\!\U$-module if $\mathsf{V}_{\bf d}$ is replaced by $\mbf V_{\bf d}$.
\end{lem}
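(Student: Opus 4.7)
My plan is to verify the three defining relations (S1)--(S3) of $\U$ at the level of the Grothendieck group $\mathsf{V}_{\bf d}$ by descending the categorical identities of Lemma \ref{lem3}. The translation rule I will use is the one set up in (\ref{eq7}): a shift $[n]$ and Tate twist $(m)$ contribute $v^{n}\tau^{2m-n}$ to the class of a complex, and the base change $\A\otimes_{\tilde\A}(-)$ identifies $\tau$ with $t$, subject to $t^{2}=-1$. My first step is to record this dictionary and observe that the weight-$r$ action of $\mrk K_{r}=\mrm{Id}[d-2r](d/2)$ descends to the scalar $t^{2r}v^{d-2r}$, matching Proposition \ref{prop9}.

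Relation (S1) is essentially free, since $\mrk K_{r}\mrk K_{r}^{-1}=\mrm{Id}$ already at the functor level. For (S2) I would directly compare $\mbf K\mbf E$ and $\mbf E\mbf K$ on a class supported at weight $r+1$: the former yields $t^{2r}v^{d-2r}$ times the $\mbf E$-coefficient, the latter yields $t^{2(r+1)}v^{d-2(r+1)}$ times the same coefficient, and the ratio is exactly $v^{2}t^{-2}$; the $\mbf K\mbf F$ relation is handled symmetrically.

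The one nontrivial relation is (S3), and it should fall directly out of Lemma \ref{lem3}(c). Under the dictionary, each of the two families of $\mrm{Id}$-summands contributes $t^{2r}v^{d-2j-1-2r}$ or $t^{2r}v^{2r-2j-1-d}$, and the corresponding geometric series sum to $t^{2r}[d-2r]_{v}$ and $t^{2r}[2r-d]_{v}$, respectively. At any weight $r$ exactly one of these two series is nonempty (both vanish when $d=2r$). Combining with the Tate twist $(1)$ on $\mrk F\mrk E$, which translates to $t^{2}=-1$, together with $[-n]_{v}=-[n]_{v}$ and $t^{-2r}=t^{2r}$, one obtains
\[
\mbf E\mbf F - t^{2}\mbf F\mbf E \;=\; t^{2r}[d-2r]_{v} \;=\; \frac{\mbf K-\mbf K^{-1}}{v-v^{-1}}\quad\text{at weight }r,
\]
which is (S3). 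This bookkeeping of shifts, twists, and the two empty/nonempty regimes is the main (in fact only) obstacle.

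For the $_{\A}\!\U$-module upgrade on $\mbf V_{\bf d}$, it remains to see that the divided-power generators $E^{(n)}, F^{(n)}$ act through $\A$-linear operators. Iterating Lemma \ref{lem3}(a),(b) and using the identity $[n]_{v,t}=t^{n-1}[n]_{v}$ from Section \ref{super}, one gets $\mbf E^{n}=[n]^{!}_{v,t}\,\mbf E^{(n)}$ and $\mbf F^{n}=[n]^{!}_{v,t}\,\mbf F^{(n)}$, so $E^{(n)}$ and $F^{(n)}$ act as the $\A$-linear maps $\mbf E^{(n)},\mbf F^{(n)}$ already induced from the functors $\mrk E^{(n)},\mrk F^{(n)}$, which preserve $\mbf V_{\bf d}$ by construction.
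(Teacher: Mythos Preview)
Your proposal is correct and follows exactly the route the paper indicates: the paper simply writes ``By Lemma~\ref{lem3}, we have'' before the statement, and you have filled in precisely the bookkeeping (the dictionary $[n](m)\mapsto v^{n}\tau^{2m-n}$, the scalar identification of $\mrk K_r$, and the translation of Lemma~\ref{lem3}(c) into relation (S3) at each weight $r$) that this citation is pointing to. Your treatment of the integral form via Lemma~\ref{lem3}(a),(b) likewise matches the paper's implicit use of Lemma~\ref{lem1}.
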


Moreover, the quadruple ($\mbf V_{\bf d}, \mbf K^{\pm 1}, \mbf E^{(a)}, \mbf F^{(a)}$) defines a module of the integral form of $\U$.
We shall show that $ \mathsf{V}_{\bf d} \equiv (\mathsf{V}_{\bf d}, \mbf K^{\pm 1}, \mbf E,\mbf F) $ is isomorphic to the tensor product module $\Lambda_{\bf d}$ in Section \ref{super} (\ref{Ld}).

\subsection{}
In this subsection, we treat the special case ${\bf d}=(d)$.
In particular, the category  $\mathcal{Q}_{\bf d}$  is nothing but the category $\mathcal Q_d^{r, 0}$ in Section \ref{sec3.5}.
Again, by Proposition  \ref{prop4} and Lemma \ref{lem4}, we have

\begin{prop}\label{prop5}
 $\mathsf{V}_d \simeq \Lambda_d$ and $\mbf V_d\simeq {}_{\A}\!\Lambda_d$, as modules of $\U$ and ${}_{\A}\!\U$, respectively. In particular,
 \begin{equation}\label{eq15}
   \begin{split}
 &\mrk{E}\xi_r=t^{r-1}[d+1-r]_{v}\xi_{r-1}, \quad  \mrk{F}\xi_r=t^r[r+1]_{v}\xi_{r+1}, \quad
  \mrk{K}\xi_r=v^{d-2r}t^{2r-1}\xi_{r},
   \end{split}
 \end{equation}
 where
 $\xi_r=(\bar{\mbb Q}_l)_{\F_r}$.
\end{prop}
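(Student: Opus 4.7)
The plan is to match the action of $\mrk E, \mrk F, \mrk K$ on an explicit $\mathbb Q[t](v)$-basis of $\mathsf V_d$ with the action of $E, F, K$ on $\Lambda_d$ given in Proposition \ref{prop9}. Since $\mathbf d = (d)$, the parabolic $P_{\mathbf d}$ equals $G=GL(k^d)$, so $\mathcal Q_{\mathbf d}^r$ consists of $G$-equivariant semisimple complexes on $\F_r$. Each $\F_r$ is a single $G$-orbit, hence carries --- up to shift and Tate twist --- the unique simple $G$-equivariant perverse sheaf $\xi_r := (\bar{\mathbb Q}_l)_{\F_r}$. Thus $\{\xi_0,\ldots,\xi_d\}$ is an $\mathcal A$-basis of $\mathbf V_d$, and a $\mathbb Q[t](v)$-basis of $\mathsf V_d$, matching the vector-space underlying $\Lambda_d$.

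Next I would compute each action using the definitions (\ref{eq31}). The Grothendieck-group conventions (\ref{eq7}) translate to $[C[n]] = v^n\tau^{-n}[C]$ and $[C(n)] = \tau^{2n}[C]$, followed by $\tau\mapsto t$. For $\mrk K$: the functor $\mrk K_r$ sends $\xi_r$ to $\xi_r[d-2r](d/2)$, whose class is $v^{d-2r}\tau^{-(d-2r)}\tau^{d}\xi_r \mapsto v^{d-2r}t^{2r}\xi_r$. For $\mrk F \xi_r = p'_! p^*\xi_r[r](r)$ with $p':\F_{r,r+1}\to\F_{r+1}$ a $\mathbb P^r$-bundle, the projective-bundle formula (or decomposition theorem as in \ref{sec3.4}) gives
\[
\mrk F \xi_r \;=\; \bigoplus_{j=0}^r\xi_{r+1}[r-2j](r-j),
\]
whose Grothendieck class sums to $\sum_{j=0}^r v^{r-2j}t^{-(r-2j)+2(r-j)}\xi_{r+1}=t^r\sum_{j=0}^r v^{r-2j}\xi_{r+1}=t^r[r+1]_v\xi_{r+1}$. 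The calculation for $\mrk E \xi_r = p_!p'^*\xi_r[d-r]((d-1)/2)$ is parallel, using that $p:\F_{r-1,r}\to\F_{r-1}$ is a $\mathbb P^{d-r}$-bundle; the geometric series collapses to $t^{r-1}[d+1-r]_v\xi_{r-1}$.

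Comparing with Proposition \ref{prop9}, these formulas coincide with the action on $\Lambda_d$; therefore $\xi_r\leftrightarrow\xi_r$ extends by linearity to a $\U$-module isomorphism $\mathsf V_d\simeq\Lambda_d$, and since both sides are spanned by the same basis this map is automatically bijective (one can alternatively invoke simplicity of $\Lambda_d$ from Proposition \ref{prop9}). Because the scalars $t^r[r+1]_v$, $t^{r-1}[d+1-r]_v$, $t^{2r}v^{d-2r}$ all lie in $\mathcal A$, the same assignment restricts to an isomorphism $\mathbf V_d\simeq {}_{\mathcal A}\Lambda_d$ of $_{\mathcal A}\U$-modules. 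The main technical obstacle is the bookkeeping of cohomological shifts and Tate twists in the Grothendieck group: the clean cancellations that produce the scalars $t^r$ and the $v$-integers $[n]_v$ depend on correctly combining the $t^{-n}$ contribution of each shift $[n]$ with the $t^{2m}$ contribution of each Tate twist $(m)$ before summing the geometric series in $v$.
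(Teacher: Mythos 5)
Your proof is correct and follows essentially the same route as the paper: the paper transports the computation through $\Psi_{r,0}$ (Proposition \ref{prop4}), writing $\xi_r=\Psi_{r,0}(F_{r,0})(\bar{\mbb Q}_l)_{\F_0}$ and reducing to the identities for $K\circ F_{r,0}$, $F\circ F_{r,0}$, $E\circ F_{r,0}$ in $\mbf S_{v,t}(2,d)$, which amount to exactly your $\mbb P^{r}$- and $\mbb P^{d-r}$-bundle pushforwards with the same shift/twist bookkeeping, and then likewise concludes by comparison with Proposition \ref{prop9}. The only discrepancy --- your $\mrk K\xi_r=v^{d-2r}t^{2r}\xi_r$ against the stated $t^{2r-1}$ in (\ref{eq15}) --- is a typo in the proposition rather than an error on your part, since the paper's own proof gives $K\circ F_{r,0}=v^{d-2r}t^{2r}F_{r,0}$ and Proposition \ref{prop9} has $K\cdot\xi_r=t^{2r}v^{d-2r}\xi_r$, both agreeing with your value.
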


\begin{proof}
By Proposition  \ref{prop9}, it is enough to show (\ref{eq15}).
Since $\xi_r=\Psi_{r,0}(F_{r,0})(\bar{\mathbb{Q}}_l)_{\F_0}$, by Proposition  \ref{prop4}, it remains to show that
\begin{equation} \label{eq100}
    \begin{split}
      K  F_{r,0}=v^{d-2r}t^{2r}F_{r,0},\;
   F  F_{r,0}=t^r[r+1]_{v}F_{r+1,0},
   \ {\rm and}\  E  F_{r,0}=t^{r-1}[d+1-r]_{v}F_{r-1,0}.
    \end{split}
  \end{equation}
  The first two identities in (\ref{eq100}) follows from Lemma \ref{sl-relation} and Lemma \ref{lem1}.
We now show the last one in (\ref{eq100}). By Lemma \ref{sl-relation}, we have
\begin{eqnarray*}
E  F_{r,0}&= \sum_{r'} E_{r',r'+1} F_{r,0} =E_{r-1,r} F_{r,0}.
\end{eqnarray*}
Let us compute the complex $E_{r-1,r} F_{r,0}$.
The support of $p_{12}^* E_{r-1,r} \otimes p_{23}^* F_{r,0}$ is
\[
S=\{ (V, V', V'')\in \F_{r-1}\times \F_{r} \times \F_{0}| V'' \subset V' \supset V\}.
\]
Observe that $p_{13}$ is a fiber bundle of fiber isomorphic to  the projective space $\mbb P^{d-r}$.
 Further, the image of $p_{13}$ is $\F_{r-1, 0}$.
The restriction of $p_{12}^* E_{r-1,r} \otimes p_{23}^* F_{r,0}$ to $S$ is
\[
(\bar{\mbb Q}_l)_S [ d-r ](\frac{d-1}{2})\otimes (\bar{\mbb Q}_l)_S =(\bar{\mbb Q}_l)_S  [d-r]\left(\frac{d-1}{2}\right).
\]
So
\begin{equation*}
\begin{split}
E_{r-1,r} F_{r,0}
&=(p_{13})_! (p_{12}^* E_{r-1,r} \otimes p_{23}^* F_{r,0})
=(p_{13})_! ( \bar{\mbb Q}_l)_S  [d-r]\left(\frac{d-1}{2}\right)\\
&= \oplus_{k=0}^{d-r} (\bar{\mbb Q}_l)_{\F_{r-1,0}} [d-r-2k]\left(\frac{d-1}{2}-k\right) =t^{r-1} [d-r+1]_{v}F_{r-1,0}.
\end{split}
\end{equation*}
The proposition follows.
\end{proof}

\subsection{}
In this subsection, we treat the general case.
 Let
 \[
 \Xi^{\bf d}_r=\{ {\bf r}=(r_1,r_2, \cdots, r_m)\in \mbb{Z}_{\geq 0}^m | r=\sum_{1\leq l\leq 1} r_l, r_l\leq d_l, \forall l\}
 \quad\mbox{and}\quad
 \Xi^{\bf d}=\cup_{0\leq r\leq d} \Xi^{\bf d}_r.
 \]
To each ${\bf r}\in \Xi^{\bf d}_r$, we associate a $P_{\bf d}$-orbit  in  $\F_r$ as follows.
 $$O_{\bf r}=\{W \in \F_r\ |\ |W \cap V_l/W \cap V_{l-1}|=r_l, \forall\ l\}.$$
The intersection complexes $IC(O_{\bf r})$ are all possible $P_{\bf d}$-equivariant simple perverse sheaves on $\F_r$.
  So we have

\begin{lem}
\label{V-basis}
The intersection complexes $IC(O_{\bf r}),\ \forall\ {\bf r}\ \in \Xi^{\bf d}$, form an $\A$-basis of $\mathbf{V}_{\bf d}$ and a $\Q(v)[t]$-basis of $\mathsf{V}_{\bf d}$.
\end{lem}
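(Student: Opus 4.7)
The plan is to classify the indecomposable objects of $\mathcal{Q}_{\bf d}$ explicitly and then deduce the basis statements by extension of scalars along $\tilde{\A}\to \A\to \Q[t](v)$.

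First I would verify that the $P_{\bf d}$-orbits on $\F_r$ are precisely the $O_{\bf r}$ for ${\bf r}\in \Xi^{\bf d}_r$: the tuple $(|W\cap V_l/W\cap V_{l-1}|)_{l=1}^{m}$ is a complete $P_{\bf d}$-invariant of $W\in \F_r$, and every admissible tuple is realized. Picking a representative $W_{\bf r}\in O_{\bf r}$ adapted to the fixed flag (\ref{eq38}), its stabilizer in $P_{\bf d}$ is an intersection of two parabolic subgroups of $G=GL(k^d)$, hence connected. Consequently, every $P_{\bf d}$-equivariant simple perverse sheaf on $\F_r$ is of the form $IC(O_{\bf r})$, with no non-trivial equivariant local systems contributing.

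Next, since $\mathcal{Q}_{\bf d}$ by definition consists of $P_{\bf d}$-equivariant semisimple complexes on $\sqcup_r \F_r$, every object decomposes as a finite direct sum of shifts and Tate twists of the $IC(O_{\bf r})$. In the split Grothendieck group $\mathbf{Q}_{\bf d}$, the operations $[1](1/2)$ and $(1/2)$ are realized as multiplication by $v$ and $\tau$, respectively. Distinct $IC(O_{\bf r})$ have distinct supports, and the only shift/twist of an indecomposable perverse sheaf isomorphic to itself is the trivial one, so the classes $\langle IC(O_{\bf r})[n](m/2)\rangle = v^n \tau^m \langle IC(O_{\bf r})\rangle$ are all $\mathbb{Z}$-linearly independent. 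This identifies $\mathbf{Q}_{\bf d}$ as a free $\tilde{\A}$-module with basis $\{\langle IC(O_{\bf r})\rangle : {\bf r}\in \Xi^{\bf d}\}$.

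Finally, the $\A$-basis and $\Q[t](v)$-basis statements follow by base change: $\mathbf{V}_{\bf d}=\A\otimes_{\tilde{\A}}\mathbf{Q}_{\bf d}$ and $\mathsf{V}_{\bf d}=\Q[t^{\pm 1}](v)\otimes_{\A}\mathbf{V}_{\bf d}$ inherit the same basis from $\mathbf{Q}_{\bf d}$ because extension of scalars preserves freeness. The only non-formal step is the orbit/stabilizer analysis and in particular the connectedness of stabilizers in the first paragraph; once the simple objects of $\mathcal{Q}_{\bf d}$ are pinned down, the rest reduces to routine bookkeeping with Grothendieck groups.
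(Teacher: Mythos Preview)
Your proposal is correct and follows the same approach as the paper, just spelled out in more detail: the paper's entire argument is the sentence preceding the lemma, namely that the $IC(O_{\bf r})$ exhaust the $P_{\bf d}$-equivariant simple perverse sheaves on $\F_r$, after which the basis statement is taken as immediate. Your added justification of the orbit classification, connectedness of stabilizers (hence triviality of equivariant local systems), and the bookkeeping passage from $\mathbf{Q}_{\bf d}$ to $\mathbf{V}_{\bf d}$ and $\mathsf{V}_{\bf d}$ simply makes explicit what the paper leaves to the reader.
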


Next, we want to define the restriction functor ``${\rm Res}$''.
In the following, we use notation $\F^d_r$ instead of $\F_r$ to avoid ambiguities.
We fix pairs $(r', r'')$ and $(d', d'')$ of nonnegative integers such that $r'+r''=r$ and $d'+d''=d$.
We  fix a vector subspace $W$ in $k^d$ such that $|W|=d'$. Consider the following diagram
 \begin{equation}\label{eq37}
\xymatrix{\F_{r'}^{d'}\times \F_{r''}^{d''} & Y^{r',r''} \ar[l]_-{\kappa} \ar[r]^{\iota} &\F_r^{d},
 }
 \end{equation}
 where $Y^{r',r''}=\{W' \in \F_r^d\ |\ |W\cap W'|=r'\}$, $\kappa(W')=(W \cap W', W'/(W \cap W'))$ and $\iota$ is the closed embedding. We define
 \begin{align*}\label{eq8}
   {\rm Res}_{ d',d''}^{r',r''}&=\kappa_!\iota^*[(d''-r'')r']\left ( \frac{d''r'}{2} \right ): \D(\F_r^{d})\rightarrow \D( \F_{r'}^{d'}\times \F_{r''}^{d''}),\quad\mbox{and}\\
   {\rm Res}_{ d',d''} & =\oplus_{r',r''}{\rm Res}_{ d',d''}^{r',r''}: \bigoplus_{0\leq r\leq d} \D(\F_r^{d})\rightarrow \bigoplus_{0\leq r'\leq d' ,0\leq r''\leq d''}\D( \F_{r'}^{d'}\times \F_{r''}^{d''}).
 \end{align*}
 We define
 \begin{align*}
& \mathfrak{E}'_{r,r+1}=\mathfrak{E}_{r',r'+1}\times Id: \quad  \D(\F_{r+1}) \otimes \oplus_{r'}\D(\F_{r'})\rightarrow \D(\F_{r}) \otimes \oplus_{r'}\D(\F_{r'}),\\
&  \mathfrak{E}''_{r,r+1}=Id\times \mathfrak{E}_{r,r+1}:  \quad \oplus_{r'}\D(\F_{r'})\otimes \D(\F_{r+1})\rightarrow \oplus_{r'}\D(\F_{r'}) \otimes \D(\F_{r}).
\end{align*}
Similarly, we define the  notations $\mathfrak{K}'_r, \mathfrak{F}'_{r,r-1}$, $\mathfrak{K}''_r$ and $ \mathfrak{F}''_{r,r-1}$.
The following  proposition is a mixed version of   Proposition 3.8.3 in \cite{Zheng07}.

 \begin{prop}\label{prop7}
 For any $ C_r\in \mathcal{Q}^r_{\bf d}$, we have
 \begin{equation}\label{eq10}
 \begin{split}
  &  {\rm Res}_{ d',d''}^{r_1,r_2}\mathfrak{K}_rC_r = \mathfrak{K}'_{r_1}\mathfrak{K}''_{r_2}{\rm Res}_{ d',d''}^{r_1,r_2}C_r,\quad \\
   &   {\rm Res}_{ d',d''}^{r_1,r_2}\mathfrak{E}_{r-1,r}C_r = \mathfrak{E}'_{r_1,r_1+1}{\rm Res}_{ d',d''}^{r_1+1,r_2}C_r\oplus \mathfrak{K}'_{r_1}\mathfrak{E}''_{r_2,r_2+1}(r_1){\rm Res}_{ d',d''}^{r_1,r_2+1}C_r, \\
 & {\rm Res}_{ d',d''}^{r_1,r_2}\mathfrak{F}_{r+1,r}C_r = \mathfrak{F}'_{r_1,r_1-1}\mathfrak{K}''^{-1}_{r_2}{\rm Res}_{ d',d''}^{r_1-1,r_2}C_r\oplus\mathfrak{F}''_{r_2,r_2-1}(r_1){\rm Res}_{ d',d''}^{r_1,r_2-1}C_r.
 \end{split}
 \end{equation}
 \end{prop}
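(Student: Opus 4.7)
The plan is to prove each of the three identities separately by setting up an appropriate cartesian square, applying proper base change, and tracking the shifts and Tate twists. The underlying geometric argument is the direct analogue of Zheng's Proposition 3.8.3 in \cite{Zheng07}, so the main work is to verify that the Tate twists coming from the mixed structure match on both sides.

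The first identity is immediate. Since $\mrk{K}_r = \mrm{Id}[d-2r](d/2)$ is a pure shift and twist, and shifts and twists commute with ${\rm Res}_{d',d''}^{r_1,r_2}$ by property (2) of Section \ref{perversesheaf}, it suffices to check that the total shift and twist on both sides agree, which follows immediately from $d = d' + d''$ and $r = r_1 + r_2$.

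For the second identity, write $\mrk{E}_{r-1, r} = p_! p'^*[d-r]((d-1)/2)$ with $\F^d_{r-1} \xleftarrow{p} \F^d_{r-1, r} \xrightarrow{p'} \F^d_r$. Form the cartesian square
\[
\xymatrix{
\widetilde{Y} \ar[r]^{\tilde{\iota}} \ar[d]_{\tilde{p}} & \F^d_{r-1, r} \ar[d]^p \\
Y^{r_1, r_2} \ar[r]^\iota & \F^d_{r-1},
}
\]
where $\widetilde{Y} = p^{-1}(Y^{r_1, r_2}) = \{(V, V') : V \subset V',\, |V \cap W| = r_1\}$. Base change (property (8)) gives $\iota^* p_! = \tilde{p}_! \tilde{\iota}^*$, reducing the left hand side to $\kappa_! \tilde{p}_! (p' \tilde{\iota})^* C_r$ together with the inherited shift and twist. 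Stratify $\widetilde{Y} = \widetilde{Y}_1 \sqcup \widetilde{Y}_2$, where $\widetilde{Y}_2 = \{V' \subset V + W\}$ is closed (equivalently $|V' \cap W| = r_1 + 1$) and $\widetilde{Y}_1$ is its open complement (where $|V' \cap W| = r_1$). The resulting distinguished triangle splits as a direct sum thanks to the decomposition theorem applied inside $\mathcal{Q}_{\bf d}$. One identifies $\widetilde{Y}_2 \cong \F^{d'}_{r_1, r_1+1} \times \F^{d''}_{r_2}$ via $(V, V') \mapsto ((V \cap W, V' \cap W),\, V/(V \cap W))$, converting the closed summand into $\mrk{E}'_{r_1, r_1+1} \circ {\rm Res}^{r_1+1, r_2}_{d', d''} C_r$. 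Similarly, $\widetilde{Y}_1 \cong \F^{d'}_{r_1} \times \F^{d''}_{r_2, r_2+1}$ via $(V, V') \mapsto (V \cap W,\, (V/(V \cap W), V'/(V' \cap W)))$, yielding the open summand as $\mrk{E}''_{r_2, r_2+1} \circ {\rm Res}^{r_1, r_2+1}_{d', d''} C_r$; the shift/twist discrepancy computed from the relative dimensions of $\widetilde Y_1$ is exactly what the prefactor $\mrk{K}'_{r_1}(r_1)$ absorbs.

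The third identity is strictly parallel, using the correspondence $\F^d_{r, r+1}$ in place of $\F^d_{r-1, r}$. The preimage of $Y^{r_1, r_2}$ (now with $r_1 + r_2 = r + 1$) in $\F^d_{r, r+1}$ stratifies by $|V \cap W| \in \{r_1 - 1,\, r_1\}$, giving the two summands, with the prefactor $\mrk{K}''^{-1}_{r_2}$ and the Tate twist $(r_1)$ arising by the same bookkeeping. The main obstacle is the careful accounting of the shifts and Tate twists that produce these prefactors; obtaining them requires substituting $r = r_1 + r_2 \pm 1$ into expressions like $a(d-a-r)$ and $d'' r'$ and verifying that the residual shift/twist on each stratum matches exactly the difference between the ambient dimensions on the two sides.
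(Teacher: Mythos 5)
Your outline (base change along the correspondence defining $\mrk{E}_{r-1,r}$, stratification of the preimage of $Y^{r_1,r_2}$ by $|V'\cap W|\in\{r_1,r_1+1\}$, closed stratum matched with the $\mrk{E}'$-term and open stratum with the $\mrk{K}'\mrk{E}''$-term) is the same skeleton as the paper's, but the two steps that carry the actual content are asserted rather than proved, and both assertions fail as stated. First, $\widetilde{Y}_2$ is not isomorphic to $\F^{d'}_{r_1,r_1+1}\times\F^{d''}_{r_2}$, nor is $\widetilde{Y}_1$ isomorphic to $\F^{d'}_{r_1}\times\F^{d''}_{r_2,r_2+1}$: the map $(V,V')\mapsto\bigl((V\cap W\subset V'\cap W),\,V/(V\cap W)\bigr)$ forgets how $V$ sits inside $k^d$, and its fiber over $((S\subset S'),T)$ contains the whole affine space of subspaces $V$ with $V\cap W=S$ and image $T$ in $k^d/W$ (dimension $r_2(d'-r_1)$), plus, for $\widetilde{Y}_1$, a further choice of $V'$; a dimension count already rules both claims out. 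More importantly, even a correct description of the strata as abstract varieties cannot ``convert'' their contributions into $\mrk{E}'_{r_1,r_1+1}{\rm Res}^{r_1+1,r_2}_{d',d''}C_r$ and $\mrk{K}'_{r_1}\mrk{E}''_{r_2,r_2+1}(r_1){\rm Res}^{r_1,r_2+1}_{d',d''}C_r$, because these terms involve $C_r$ restricted to $X_j=\{V'\in\F_r : |V'\cap W|=r_1+j-1\}$, pushed to the product of small Grassmannians and then transported through a second correspondence. The comparison must be made over the fiber products $Z_j=X_j\times_{Y_j}Y'_j$ (the paper's $Z_1,Z_2$) by a second application of base change, together with the identification of the comparison maps from the strata to $Z_1,Z_2$ as an isomorphism and a vector/affine-space bundle respectively; it is exactly the rank of that bundle that produces the residual shift and Tate twist which the prefactor $\mrk{K}'_{r_1}(r_1)$ is meant to absorb. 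None of this bookkeeping, which is precisely where the identity could fail, can be extracted from the identifications you assert; the same objection applies verbatim to your treatment of the third identity.

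Second, the splitting of the open--closed distinguished triangle is not a consequence of the decomposition theorem: semisimplicity of the three terms does not force the connecting map to vanish (already $H^*_c(\mathbb{A}^1)\not\cong H^*_c(\mathbb{G}_m)\oplus H^*_c(\mathrm{pt})$ for the decomposition of $\mathbb{A}^1$ into $\mathbb{G}_m$ and the origin). The paper secures both the splitting and the control of weights by two ingredients absent from your sketch: (i) the reduction, via the equivariant equivalence $\widetilde{\iota}^*$ of \cite{BL94} and the construction of Section 3.4, to the case where $C_r$ is a direct summand of $\pi_!(\bar{\mathbb{Q}}_l)_{\widetilde{\F}}$ for a proper map $\pi$ from a smooth irreducible variety, so that after pulling everything back to $\widetilde{X'}$ one manipulates pushforwards of constant sheaves along maps with explicit bundle structure (this is where \cite[8.1.6]{Lusztig93} enters); and (ii) Braden's theorem \cite{B03} that $\kappa_!\iota^*$ is a hyperbolic localization, hence preserves purity and weights of equivariant complexes, which is what justifies the final comparison of Tate twists. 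Without these, ``verify that the Tate twists coming from the mixed structure match'' is not a proof but a restatement of what has to be shown.
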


  \begin{proof}
 The first identity is obvious.
We now prove the second one.  We only need to prove  it for $C_r$ a simple perverse sheaf.

We claim that for any simple perverse sheaf  $ C_r\in \mathcal{Q}^r_{\bf d}$, there exists a proper map $\pi:\widetilde{\F}\rightarrow \F_r$, where $\tF$ is
 a smooth irreducible variety, such that $C_r$ is a direct summand of $\pi_!(\bar{\mathbb{Q}}_l)_{\widetilde{\F}}$ up to a shift.
 Let $\widetilde{\iota}: \F_r \rightarrow \F_r\times \F_{d_1}$ be the embedding map sending $V'\mapsto (V', V_1)$, where $V_1$ is the fixed vector space in (\ref{eq38}).
 By the argument in ~\cite[Section 2.6.2]{BL94}, the functor $\widetilde{\iota}^*: \D_{P_{\widetilde{\bf d}}}( \F_r\times \F_{d_1})\rightarrow \D_{P_{\bf d}}(\F_r)$ is an equivalence, where $\widetilde{\bf d}=(d_2, \cdots,d_m)$.
 By the argument in Section \ref{sec3.4}, for any object $C'$ in  $\D_{P_{\widetilde{\bf d}}}( \F_r\times \F_{d_1})$, there exist a smooth irreducible variety $\widetilde{\F}'$ and a proper map $\pi':\widetilde{\F}'\rightarrow \F_r\times \F_{d_1}$  such that $C'$ is a direct summand of $\pi'_!(\bar{\mathbb{Q}}_l)_{\widetilde{\F}'}$.
 Let $\widetilde{\F}=\F_r\times_{(\F_r\times \F_{d_1})}\widetilde{\F}'$ and $C_r=\widetilde{\iota}^*C'$. By base change formula \ref{perversesheaf} (8),  $C_r$ is a direct summand of $\pi_!(\bar{\mathbb{Q}}_l)_{\widetilde{\F}}$, where $\pi: \widetilde{\F}\rightarrow \F_r$ is the pull back map of $\pi'$. This proves the claim.

 By the above claim, we may and will assume that $C_r=\pi_!(\bar{\mathbb{Q}}_l)_{\widetilde{\F}}$ for some smooth irreducible variety $\widetilde{\F}$ and a proper map $\pi:\widetilde{\F}\rightarrow \F_r$.
Now consider the following diagram
$$\xymatrix{
&&&&X' \ar[dll]_{\iota'} \ar[drr]^s  &&&&\\
\F_r && \F_{r-1,r} \ar[ll]_-{p'} \ar[rr]^-{p} && \F_{r-1} && X'' \ar[ll]_-{\iota''} \ar[rr]^-{\kappa''} && \F^{d'}_{r_1}\times \F^{d''}_{r_2}}$$
where $X''=\{V\in \F_{r-1}\ |\ |V\cap W|=r_1\}$ and $X'=\F_{r-1,r}\times_{\F_{r-1}}X''$. By base change formula \ref{perversesheaf} (8), we have
$\kappa''_!\iota''^*p_!p'^*C_r= \kappa''_!s_!\iota'^*p'^*C_r.$

Let $X=\{V'\in \F_r\ |\ |V'\cap W|=r_1\ {\rm or}\ r_1+1\}$. Then $X$ has a partition $X=X_1\sqcup X_2$ with $X_j=\{V'\in \F_r\ |\ |V'\cap W|=r_1+j-1\}$ for $j=1,2$. Let $Y=Y_1\sqcup Y_2$ with $Y_1=\F_{r_1}\times \F_{r_2+1}$ and
$Y_2=\F_{r_1+1}\times \F_{r_2}$. Let $Y'=Y'_1\sqcup Y'_2$ with $Y'_1=\F_{r_1}\times \F_{r_2, r_2+1}$ and $Y'_2=\F_{r_1, r_1+1}\times \F_{r_2}$.
Let $Z_j=X_j\times_{Y_j}Y'_j$ and $Z=Z_1 \sqcup Z_2$. Consider the following diagram
$$\xymatrix{\widetilde{\F}\ar[d]_{\pi} &&\widetilde{Z} \ar[ll] \ar[d]^{\widetilde{\pi}'} && \widetilde{X'} \ar[ll]_{\widetilde{b}} \ar[d]^{\widetilde{\pi}}\\
 \F_r & &Z \ar[d]_-{\kappa'} \ar[ll]_-{\iota s'}\ar[dll]_-{ s'} && X' \ar[ll]_b \ar[d]^{\kappa'' s}\\
 X\ar[u]_{\iota}\ar[r]_{\kappa}& Y & Y' \ar[l]_{q'} \ar[rr]^{q} && \F^{d'}_{r_1}\times \F^{d''}_{r_2}
}$$
where $b$ is the map such that $p'\iota'=\iota s' b$, $\widetilde{Z}=Z\times_{\F_r}\widetilde{\F}$ and
$\widetilde{X'}=X'\times_{\F_r}\widetilde{\F}$.  By the base change formula, the commutativity of the diagram and the assumption $C_r=\pi_!(\bar{\mathbb{Q}}_l)_{\widetilde{\F}}$, we have
$$ \kappa''_!s_!\iota'^*p'^*C_r=q_! \kappa'_!b_!b^*s'^*\iota^*C_r=q_! \kappa'_!b_!\widetilde{\pi}_!(\bar{\mathbb{Q}}_l)_{\widetilde{X'}}.$$

We further have a partition of $\widetilde{X'}=\widetilde{X'_1}\sqcup \widetilde{X'_2}$ and $\widetilde{Z}=\widetilde{Z_1}\sqcup \widetilde{Z_2}$.
For a map in the above diagram, if its domain has a partition, we shall use a subscript $j$ ($j=1,2$) to indicate the restriction map to the corresponding part.
We can check that $b_1$ is a vector bundle of rank $r_1$ (resp. $b_2$ is an identity map), so is $\widetilde{b}_1$ (resp. $\widetilde{b}_2$).
By ~\cite[8.1.6]{Lusztig93}, we have
\begin{eqnarray*}
&&b_!b^*s'^*\iota^*C_r=b_!\widetilde{\pi}_!(\bar{\mathbb{Q}}_l)_{\widetilde{X'}}
=b_{1!}\widetilde{\pi}_{1!}(\bar{\mathbb{Q}}_l)_{\widetilde{X'_1}}\oplus b_{2!}\widetilde{\pi}_{2!}(\bar{\mathbb{Q}}_l)_{\widetilde{X'_2}}\\
&=&\widetilde{\pi}'_1(\bar{\mathbb{Q}}_l)_{\widetilde{Z}_1}[-2r_1]
\oplus \widetilde{\pi}'_2(\bar{\mathbb{Q}}_l)_{\widetilde{Z}_2}
=s_1'^*\iota_1^*C_r[-2r_1]\oplus s_2'^*\iota_2^*C_r,
\end{eqnarray*}
If we ignore  Tate twists, then we have
\begin{eqnarray*}
  &&{\rm Res}_{ d',d''}^{r_1,r_2}\mathfrak{E}_{r-1,r}C_r  =
  q_{1!}\kappa_{1!}'s'^*_1\iota^*_1C_r[N-2r_1]\oplus q_{2!}\kappa_{2!}'s'^*_2\iota^*_2C_r[N]\\
  &= &\mathfrak{K}'_{r_1}\mathfrak{E}''_{r_2,r_2+1}{\rm Res}_{ d',d''}^{r_1,r_2+1}C_r[N-2r_1-N_1]\oplus \mathfrak{E}'_{r_1,r_1+1}{\rm Res}_{ d',d''}^{r_1+1,r_2}C_r[N-N_2],
\end{eqnarray*}
where $N=d-r+(d''-r_2)r_1$,
$N_1=d'-2r_1+d''-r_2-1+(d''-r_2-1)r_1$ and $N_2=d'-r_1-1+(d''-r_2)(r_1+1)$. By using $d=d'+d''$ and $r_1+r_2=r-1$, we have $N-2r_1-N_1=0=N-N_2$. This shows that the complexes on both sides in the second identity are isomorphic to each other if the Tate twists are ignored.

Next, we check that the weights of complexes on both sides in the second identity are the same.
 By \cite{B03}, we see that  $\kappa_!\iota^*$ in Diagram (\ref{eq37}) is equivalent to a hyperbolic localization functor.
 By ~\cite[Theorem 8]{B03}, the functor $\kappa_!\iota^*$ preserves purities and weights of equivariant complexes. Thus
  \begin{equation}\label{eq27}
{\rm wt}({\rm Res}_{ d',d''}^{r_1,r_2}\mathfrak{E}_{r-1,r}(\bar{\mathbb{Q}}_l)_{\F_{r}})=-r+1-r_1r_2.
  \end{equation}
    On the other hand, we have
  \begin{equation}\label{eq28}
  \begin{split}
& {\rm wt}(\mathfrak{E}'_{r_1,r_1+1}{\rm Res}_{ d',d''}^{r_1+1,r_2}(\bar{\mathbb{Q}}_l)_{\F_{r}})=-(r_1+1)r_2-r_1,\\
& {\rm wt}(\mathfrak{K}'_{r_1}\mathfrak{E}''_{r_2,r_2+1}(r_1){\rm Res}_{d',d''}^{r_1,r_2+1}(\bar{\mathbb{Q}}_l)_{\F_{r}})=-r_1(r_2+1)-r_2-4r_1.
  \end{split}
  \end{equation}
  (\ref{eq27}) and (\ref{eq28}) are the same by using $r_1+r_2=r-1$  and $4r_1 \equiv 0\ {\rm mod}\ 4$. This shows that the second equality holds.
  The third one can be proved similarly.
 \end{proof}

Let ${\bf d}'=(d_1,d_2, \cdots, d_{m'})$ and ${\bf d}''=(d_{m'+1},d_{m'+2}, \cdots, d_m)$. Let $d'=\sum_{l=1}^{m'}d_l$ and  $d''=\sum_{l=m'+1}^md_l$ so that  $d'+d''=d$.
Let $\mathcal{Q}_{\bf d',d''}^{r',r''}$ be the full subcategory of $\D(\F_{r'}^{d'}\times \F_{r''}^{d''})$ consisting of all $P_{\bf d'}\times P_{\bf d''}$-equivariant semisimple complexes and $\mathcal{Q}_{\bf d',d''}=\oplus \mathcal{Q}_{\bf d',d''}^{r',r''}$.  It is clear from Proposition ~\ref{prop7} that ${\rm Res}_{ d',d''}$ restricts to a functor
\[
{\rm Res}_{ d',d''}: \mathcal Q_{\mbf d} \to \mathcal{Q}_{\bf d',d''}.
\]
Let $\mathbf{Q}_{\bf d',d''}$ be the split Grothendieck group of $\mathcal{Q}_{\bf d',d''}$ and $\mbf V_{\bf d', d''}=\A\otimes_{\tilde\A} \mbf Q_{\bf d',d''}$.
From the definitions, we have  that $\mbf V_{\bf d', d''} \simeq \mbf V_{\bf d'}\otimes \mbf V_{\bf d''}$.
The functor ${\rm Res}_{ d',d''}$ induces an $\A$-linear map
 $$r_{ d',d''}: \mathbf{V}_{\bf d} \rightarrow \mathbf{V}_{\bf d'}\otimes \mathbf{V}_{\bf d''}.$$
An argument similar to the proof of Proposition 3.8.1 in \cite{Zheng07}\label{lem7} shows that
 $r_{ d',d''}$ is an $\A$-linear isomorphism. Moreover, we have

 \begin{thm}
 \label{thm1}
(a) The maps $r_{d', d''}$ induce  isomorphisms $\mathsf{V}_{\bf d}\simeq \Lambda_{\bf d}$ and $\mbf V_{\bf d} \simeq  {}_{\A}\!\Lambda_{\bf d}$  of  modules of $\U$ and $_{\A}\!\U$, respectively.

 (b)
 The image of $\{IC(O_{\bf r})\ |\ {\bf r}\in \Xi^{\bf d}\}$ under the above isomorphism  form a $\Q[t](v)$-basis of $\Lambda_{\bf d}$.
 Moreover, the structure constants of the actions of $E$, $F$ and $K$ on $ IC(O_{\bf r})$ are in $t^a \mathbb N[v^{\pm 1}]$ for various $a\in\mathbb Z$ with respect to this  basis.

 \end{thm}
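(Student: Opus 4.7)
The plan is to prove (a) by induction on the length $m$ of $\mathbf{d}$ and then to derive (b) from Lemma \ref{V-basis} together with the decomposition theorem. For the base case $m=1$ the isomorphism is exactly Proposition \ref{prop5}. For the inductive step I would split $\mathbf{d}=(d_1,\ldots,d_m)$ as $\mathbf{d}'=(d_1)$ and $\mathbf{d}''=(d_2,\ldots,d_m)$; the paragraph immediately preceding the theorem already furnishes an $\mathcal{A}$-linear isomorphism
\[
r_{d_1,d''}\colon \mathbf{V}_{\mathbf{d}}\xrightarrow{\sim}\mathbf{V}_{(d_1)}\otimes\mathbf{V}_{\mathbf{d}''},
\]
and by the inductive hypothesis combined with Proposition \ref{prop5} the right-hand side is identified with ${}_{\mathcal{A}}\!\Lambda_{d_1}\otimes{}_{\mathcal{A}}\!\Lambda_{\mathbf{d}''}={}_{\mathcal{A}}\!\Lambda_{\mathbf{d}}$. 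Tensoring with $\mathbb{Q}[t](v)$ over $\mathcal{A}$ then yields $\mathsf{V}_{\mathbf{d}}\simeq \Lambda_{\mathbf{d}}$. What remains is to check that this composite intertwines the two $_{\mathcal{A}}\!\mathbf{U}$-actions.

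The equivariance is a direct dictionary between the three formulas of Proposition \ref{prop7} and the comultiplication $\Delta$ of Proposition \ref{prop11}: the first identity matches $\Delta(K)=K\otimes K$, the second matches $\Delta(E)=E\otimes 1+K\otimes E$, and the third matches $\Delta(F)=1\otimes F+F\otimes K^{-1}$. The delicate point is the Tate twist $(r_1)$ appearing in the mixed terms of the $E$- and $F$-formulas: passing to the Grothendieck group, $(r_1)$ becomes $t^{2r_1}$, which is precisely the super sign $(-1)^{p(m)}=(-1)^{r_1}$ dictated by (\ref{eq25}) for a vector of weight $r_1$, since $t^2=-1$. When combined with the $K$-eigenvalue $v^{d_1-2r_1}t^{2r_1}$ read off from Proposition \ref{prop5}, the resulting factor $t^{4r_1}=1$ is consumed, so the geometric formula reproduces precisely the algebraic one on $m\otimes n$.

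For part (b), transporting Lemma \ref{V-basis} through the isomorphism of (a) gives the claimed $\mathbb{Q}[t](v)$-basis of $\Lambda_{\mathbf{d}}$. For the positivity of the structure constants, I would apply the decomposition theorem (valid by the semisimplicity observed in Section \ref{sec3.4}) to $\mathfrak{E}\cdot IC(O_{\mathbf{r}})$, $\mathfrak{F}\cdot IC(O_{\mathbf{r}})$ and $\mathfrak{K}\cdot IC(O_{\mathbf{r}})$, obtaining decompositions $\bigoplus_\alpha IC(O_{\mathbf{r}'_\alpha})[j_\alpha](k_\alpha)$ with non-negative integer multiplicities. Each summand contributes $v^{j_\alpha}t^{2k_\alpha-j_\alpha}$ to the coefficient of a given $IC(O_{\mathbf{r}'})$. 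Since $IC(O_{\mathbf{r}})$ is pure of weight $0$ and the functors $\mathfrak{E}_{r-1,r}$, $\mathfrak{F}_{r+1,r}$, $\mathfrak{K}_r$ send pure weight-$0$ complexes to pure complexes of a specific weight (computed directly from their defining shifts and twists via Section \ref{perversesheaf}\,(4)--(6)), the integer $2k_\alpha-j_\alpha$ is the same for every summand that contributes to a fixed $IC(O_{\mathbf{r}'})$. This pulls out a common factor $t^a$, leaving a sum of distinct Frobenius eigenvalue contributions that lies in $\mathbb{N}[v^{\pm 1}]$.

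The main obstacle in this plan is the equivariance bookkeeping: one must verify carefully that the Tate twist $(r_1)$ of Proposition \ref{prop7}, the $K$-eigenvalue $v^{d_1-2r_1}t^{2r_1}$, and the super sign of (\ref{eq25}) conspire in exactly the right way, so that the three restriction formulas reproduce $\Delta(E)$, $\Delta(F)$, $\Delta(K)$ on homogeneous tensors. Once this is verified for $\Delta(E)$, the $\Delta(F)$ case is parallel and $\Delta(K)$ is essentially formal from the first identity of Proposition \ref{prop7}.
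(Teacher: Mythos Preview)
Your proposal is correct and follows the same route as the paper: part (a) is obtained by combining Proposition \ref{prop5} (the base case) with Proposition \ref{prop7} and the comultiplication (\ref{eq44}), and part (b) from Lemma \ref{V-basis} together with the semisimplicity and purity of the complexes $\mathfrak E\cdot IC(O_{\mathbf r})$, $\mathfrak F\cdot IC(O_{\mathbf r})$, $\mathfrak K\cdot IC(O_{\mathbf r})$. Your write-up is in fact more detailed than the paper's own proof; the only cosmetic point is that the sentence about ``the resulting factor $t^{4r_1}=1$ is consumed'' is superfluous, since the Tate twist $(r_1)\mapsto t^{2r_1}$ already matches the super sign $(-1)^{r_1}$ directly.
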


 \begin{proof} (a) follows from Propositions \ref{prop5}, \ref{prop7} and  (\ref{eq44}).
 The first statement of part (b) follows from the fact that ${\bf r}\in \Xi^{\bf d}_r$ parameterizes the  $P_{\bf d}$-orbits of $\F_{r}$. The second statement of (b) follows from the first one and Lemma ~\ref{V-basis}.
 \end{proof}

 \subsection{} \label{sec4.10}
 Recall that the ${\rm Ext}$ groups of any two objects $L, M$ in $\D(X)$ are defined by
 $${\rm Ext}^n(L,M)={\rm Hom}_{\D(X)}(L, M[n]).$$
 We will use the following properties of ${\rm Ext}$ groups.
 \begin{itemize}
   \item [(a)] ${\rm Ext}^j(L[n], M[m])={\rm Ext}^{j-n+m} (L, M)$;

   \item [(b)] If both $L$ and $M$ are perverse sheaves, then ${\rm Ext}^j_{\D(X)}(L, M)=0$ for any $j <0$;

   \item [(c)]  Suppose that $L$ and $ M$ are both simple perverse sheaves, then $\dim {\rm Ext}^0(L, M)=1$ if $L= M$ and 0 otherwise.
 \end{itemize}

 Given any two pure complexes $L, M$ in $\mathcal{Q}_{\bf d}$, we define
 \begin{equation}\label{eq30}
   (L, M)=\sum_{j\in \mbb{Z}} \dim {\rm Ext}^j(L, \mbb{D}M)v^{-j}t^{-{\rm wt}(L)-{\rm wt}(M)}.
 \end{equation}
 Since any complex in $\mathcal{Q}_{\bf d}$ is semisimple, the above definition can be extended to any two complexes in $\mathcal{Q}_{\bf d}$. This defines a bilinear form on $\mathbf{V}_{\bf d}$.
 \begin{prop}\label{prop12}
   For any two complexes $L,M$ in $\mathcal{Q}_{\bf d}$, we have
   \begin{equation*}
     \begin{split}
       (\mathfrak{K}_rL, M)&=(L, \mathfrak{K}_rM),\;\\
       (\mathfrak{E}_{r,r+1}L, M) & =(L, \mathfrak{K}_{r+1}\mathfrak{F}_{r+1,r}M[1](-\frac{2r+1}{2})),\\
      (\mathfrak{F}_{r+1,r}L, M)&=(L, \mathfrak{K}_{r}^{-1}\mathfrak{E}_{r,r+1}M[1](\frac{2r+1}{2})).
     \end{split}
   \end{equation*}
 \end{prop}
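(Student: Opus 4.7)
All three identities are Grothendieck--Verdier adjunction formulas applied to the convolution diagrams defining $\mrk{E}, \mrk{F}, \mrk{K}$. The plan is to write $\pi: \F_{r,r+1} \to \F_r$ and $\pi': \F_{r,r+1} \to \F_{r+1}$ for the two projections from the incidence variety; both are smooth and proper (as projective bundles), of relative dimensions $d-r-1$ and $r$ respectively. Consequently $\pi_* = \pi_!$, $\pi'_* = \pi'_!$, $\pi^! = \pi^*[2(d-r-1)](d-r-1)$, and $\pi'^! = \pi'^*[2r](r)$.

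For part (a), $\mrk K_r = \mrm{Id}[d-2r](\tfrac{d}{2})$ is a pure shift-and-twist, so the claim follows from (\ref{eq30}) together with the rules $\text{wt}(L[n]) = \text{wt}(L)+n$ and $\text{wt}(L(m)) = \text{wt}(L)-2m$: transferring $[d-2r](\tfrac{d}{2})$ from one argument of $(-,-)$ to the other changes the ${\rm Ext}^j$ index and the weight prefactor by the same amount on each side. Both sides equal $v^{2r-d}t^{2r}(L,M)$.

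For part (b), begin with
\[
{\rm Ext}^j(\mrk E_{r,r+1} L, \mbb D M) = {\rm Hom}_{\D(\F_r)}\bigl(\pi_! \pi'^* L[d-r-1](\tfrac{d-1}{2}),\ \mbb D M[j]\bigr).
\]
Apply $\pi_! \dashv \pi^!$, substitute $\pi^! = \pi^*[2(d-r-1)](d-r-1)$, and then apply $\pi'^* \dashv \pi'_!$ to rewrite this as
\[
{\rm Hom}_{\D(\F_{r+1})}\bigl(L,\ \pi'_! \pi^* \mbb D M\,[j + d-r-1](\tfrac{d-2r-1}{2})\bigr).
\]
Separately, a direct calculation using $\mbb D f_! = f_* \mbb D$, $\mbb D f^* = f^! \mbb D$, properness of $\pi'$, smoothness of $\pi$, and $\mbb D(X[n](m)) = \mbb D X[-n](-m)$ gives
\[
\mbb D\bigl(\mrk K_{r+1} \mrk F_{r+1, r} M[1](-\tfrac{2r+1}{2})\bigr) = \pi'_! \pi^* \mbb D M\,[d-r-1](\tfrac{d-2r-1}{2}).
\]
Hence the ${\rm Ext}^j$ dimensions agree term by term. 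A short weight check then confirms the $t^{-\text{wt}-\text{wt}}$ prefactors also coincide: both sides are weighted by $t^{-\text{wt}(L)-\text{wt}(M)+r}$. Part (c) is proved by exactly the same recipe with $\pi$ and $\pi'$ exchanged, starting from $\mrk F_{r+1, r} L = \pi'_! \pi^* L[r](r)$ and using $\pi'^! = \pi'^*[2r](r)$.

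The main obstacle is bookkeeping: the asymmetric normalizations $[d-r-1](\tfrac{d-1}{2})$ of $\mrk E_{r, r+1}$ and $[r](r)$ of $\mrk F_{r+1, r}$, combined with the two different relative dimensions appearing in $\pi^!$ and $\pi'^!$, produce a large number of shift and twist contributions on each side. The extra $[1](\mp\tfrac{2r+1}{2})$ on the right-hand sides of (b) and (c) is precisely what is required to make all of these cancel; verifying this cancellation is the only real content of the proof.
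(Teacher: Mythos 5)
Your proposal is correct and follows essentially the same route as the paper: the paper likewise applies the adjunctions $(p_!,p^!)$ and $(p'^*,p'_*)$ to the convolution diagram, identifies $p'_*p^!\,\mbb{D}M$ with $\mbb{D}(p'_!p^*M)$ via $\mbb{D}f_!=f_*\mbb{D}$, $\mbb{D}f^!=f^*\mbb{D}$, and then matches shifts, Tate twists and the weight prefactor $t^{-\mathrm{wt}(L)-\mathrm{wt}(M)+r}$ exactly as you do. The only cosmetic difference is that you substitute $\pi^!=\pi^*[2(d-r-1)](d-r-1)$ explicitly and dualize the right-hand side, whereas the paper keeps $p^!$ and converts directly by Verdier duality.
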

 \begin{proof}The first equality is obvious. We now show the second equality and  the third one can be proved similarly. By the definition of $\mathfrak{E}_{r,r+1}$ in (\ref{eq31}), we have
 \begin{eqnarray*}
   &{\rm Ext}^j(p_!p'^*L[d-1-r](\frac{d-1}{2}), \mbb{D}M)={\rm Ext}^j(L, p'_*p^![-d+1+r](-\frac{d-1}{2})\mbb{D}M)\\
   &={\rm Ext}^j(L, \mbb{D}(p'_!p^*[d-1-r](\frac{d-1}{2})M))={\rm Ext}^j(L, \mbb{D}(\mathfrak{K}_{r+1}\mathfrak{F}_{r+1,r}[1]M)).
 \end{eqnarray*}
Without loss of generality, we assume that both $L$ and $M$ are pure  complexes. By (\ref{eq30}), we have
\begin{equation*}
\begin{split}
   (\mathfrak{E}_{r,r+1}L, M)&=\sum_{j\in \mbb{Z}} \dim {\rm Ext}^j(\mathfrak{E}_{r,r+1}L, \mbb{D}M)v^{-j}t^{-{\rm wt}(L)-{\rm wt}(M)+r}\\
   &=\sum_{j\in \mbb{Z}} \dim {\rm Ext}^j(L, \mbb{D}(\mathfrak{K}_{r+1}\mathfrak{F}_{r+1,r}[1]M))v^{-j}t^{-{\rm wt}(L)-{\rm wt}(M)+r}\\
&=(L, \mathfrak{K}_{r+1}\mathfrak{F}_{r+1,r}M[1](-\frac{2r+1}{2})).
\end{split}
\end{equation*}
The proposition follows.
 \end{proof}

  We define an algebra isomorphism $\rho: \mbf S_{v,t}(2,d)\rightarrow (\mbf S_{v,t}(2,d))^{op}$ by
 $$\rho(1_r)=1_r,\quad \rho(E_{r,r+1})=vt^{-2r-2}K_rF_{r+1,r},\quad \rho(F_{r+1,r})=vt^{2r}K_rE_{r,r+1}.$$
The following   corollary follows directly from Proposition \ref{prop12}.

\begin{cor}
For any two isomorphism classes $L$ and $M$ in $\mathbf{V}_{\bf d}$ and any $x \in \mbf  S_{v,t}(2,d)$, we have $(x L, M)=(L, \rho(x)M)$.
\end{cor}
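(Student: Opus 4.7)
The plan is to reduce the desired identity $(xL, M) = (L, \rho(x) M)$ to a check on a small set of algebra generators of $\mbf{S}_{v,t}(2,d)$, and then invoke Proposition \ref{prop12} on each generator in turn. The reduction rests on two observations: the form $(-,-)$ is $\A$-bilinear (so the set of $x$ for which the identity holds is $\A$-linear), and $\rho$ is an anti-algebra isomorphism, so that whenever the identity holds for $x_1$ and $x_2$,
\begin{equation*}
(x_1 x_2 L, M) = (x_2 L, \rho(x_1) M) = (L, \rho(x_2) \rho(x_1) M) = (L, \rho(x_1 x_2) M).
\end{equation*}
By Lemma \ref{sl-simple} (cf.\ the proof of Proposition \ref{prop13}), $\mbf{S}_{v,t}(2,d)$ is generated as an algebra by $\{1_r,\, E_{r,r+1},\, F_{r+1,r}\}_{0\leq r \leq d}$, so it suffices to verify the identity for these generators.

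The case $x = 1_r$ is immediate, since $\rho(1_r) = 1_r$ and $1_r$ acts as the identity on the $\F_r$-summand of $\mbf{V}_{\mbf d}$. For $x = E_{r,r+1}$, I would apply the second identity of Proposition \ref{prop12} and translate the shift $[1]$ and twist $(-\tfrac{2r+1}{2})$ appearing there into the scalars $v$ and $t^{-(2r+1)}$ via the $\A$-module structure (\ref{eq7}). This produces $(L, v t^{-2r-1} K_{r+1} F_{r+1,r} M)$, which one then compares with $(L, \rho(E_{r,r+1}) M) = (L, v t^{-2r-2} K_r F_{r+1,r} M)$. The match reduces to rewriting $K_{r+1} F_{r+1, r}$ in terms of $K_r F_{r+1, r}$, which follows from the explicit form $K_s = 1_s[d-2s](d/2)$ together with the commutation relation encoded in Lemma \ref{sl-relation}. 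The case $x = F_{r+1,r}$ is handled by the same recipe, using the third identity of Proposition \ref{prop12} in place of the second.

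The main obstacle I anticipate is the careful bookkeeping of shifts and Tate twists: one must translate the geometric operations $[\,\cdot\,](\,\cdot\,)$ coming from Proposition \ref{prop12} into the scalars $v, t$ via (\ref{eq7}), and reconcile the mismatch between the $K_{r+1}$ appearing on the right-hand side of Proposition \ref{prop12} and the $K_r$ appearing in the definition of $\rho(E_{r,r+1})$. The identification $t^2 = -1$ from (\ref{eq4}) and the weight shifts between adjacent Grassmannians enter in this reconciliation. Once the numerology is carried out, the two sides agree on each generator, and the general case follows from the reduction above.
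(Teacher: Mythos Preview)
Your proposal is correct and is essentially the same approach as the paper's, which simply asserts that the corollary ``follows directly from Proposition~\ref{prop12}''; your reduction to the generators $1_r$, $E_{r,r+1}$, $F_{r+1,r}$ and subsequent appeal to the three identities of Proposition~\ref{prop12} is exactly the natural unpacking of that sentence. One small caveat: the form $(-,-)$ is not literally $\A$-bilinear (a direct computation from (\ref{eq30}) shows it is $v\mapsto v^{-1}$-semilinear in each slot), but since both $(xL,M)$ and $(L,\rho(x)M)$ transform identically under scaling $x$ by an element of $\A$, your reduction-to-generators step remains valid.
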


 Given any pure complex $L \in \mathcal{Q}_{\bf d}$, let
$$\mathfrak{D}(L)=(\mathbb{D}L)(-{\rm wt}(L)).$$
 Since objects in $\mathcal{Q}_{\bf d}$ are semisimple, this defines a functor $\mathfrak{D}: \mathcal{Q}_{\bf d} \rightarrow \mathcal{Q}_{\bf d}$.
We notice that $\mathfrak{D}^2$ is the identity functor.
Let $^-: \mathbf{V}_{\bf d}\rightarrow \mathbf{V}_{\bf d}$ be the $\mbb{Z}[t]$-linear map defined by  $L\mapsto \mathfrak{D}L$ and $v\mapsto v^{-1}$. Let $\mathbf{B}_{\bf d}$ be the subset of $\mathbf{V}_{\bf d}$ consisting of all $x$ satisfying
$$\overline{x}=x, \quad (x,x)\in 1+v^{-1}\mbb{Z}[v^{-1}].$$
Recall that a signed basis of a module $M$ is a subset, say $B$, of $M$ such that $B=B'\cup (-B')$ for some basis $B'$ of $M$.
\begin{prop}
  $\mathbf{B}_{\bf d}$ is the canonical signed basis of $\mathbf{V}_{\bf d}$.
\end{prop}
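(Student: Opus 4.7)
The plan is to show $\mathbf B_{\bf d}=\{\pm IC(O_{\bf r})\}_{{\bf r}\in\Xi^{\bf d}}$, which is manifestly a signed basis of $\mathbf V_{\bf d}$ by Lemma~\ref{V-basis}. For the forward inclusion $\{\pm IC(O_{\bf r})\}\subseteq\mathbf B_{\bf d}$, each $IC(O_{\bf r})$ is normalized (consistently with the weight-zero convention $\{A\}=IC(O_A)[-r(A)](-r(A)/2)$ of Section~\ref{sec3}) to be pure of weight zero and Verdier self-dual, so $\mathfrak D(IC(O_{\bf r}))=\mbb D(IC(O_{\bf r}))=IC(O_{\bf r})$, establishing bar invariance. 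The Ext-properties of Section~\ref{sec4.10} (perversity kills $\mrm{Ext}^{<0}$, simplicity gives $\dim\mrm{Hom}=1$) then yield
\[(IC(O_{\bf r}),IC(O_{\bf r}))=\sum_{j\ge 0}\dim\mrm{Ext}^j(IC(O_{\bf r}),IC(O_{\bf r}))\,v^{-j}\in 1+v^{-1}\mbb Z_{\ge 0}[v^{-1}].\]
Both defining conditions of $\mathbf B_{\bf d}$ are invariant under $x\mapsto -x$, so $\pm IC(O_{\bf r})\in\mathbf B_{\bf d}$.

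Conversely, fix $x\in\mathbf B_{\bf d}$ and expand $x=\sum_{\bf r}c_{\bf r}\,IC(O_{\bf r})$ via Lemma~\ref{V-basis} with $c_{\bf r}\in\A$. Bar invariance of $x$, combined with the $\mbb Z[t]$-linearity and $v\leftrightarrow v^{-1}$ behavior of $\overline{\phantom{x}}$ and with bar invariance of each $IC(O_{\bf r})$, forces $c_{\bf r}\in\mbb Z[t][v+v^{-1}]$. Decompose $c_{\bf r}=a_{\bf r}+tb_{\bf r}$ with $a_{\bf r},b_{\bf r}\in\mbb Z[v+v^{-1}]$ and set $G_{\bf r,\bf s}=(IC(O_{\bf r}),IC(O_{\bf s}))\in\delta_{\bf r,\bf s}+v^{-1}\mbb Z_{\ge 0}[v^{-1}]$; in particular $G$ is $t$-free and symmetric. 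The sesquilinearity of~(\ref{eq30}) gives $(cL,c'M)=cc'(L,M)$ for bar-invariant scalars $c,c'$, so using $t^2=-1$ one obtains
\[(x,x)=\bigl[\vec a^{\,T}G\vec a-\vec b^{\,T}G\vec b\bigr]+2t\,\bigl[\vec a^{\,T}G\vec b\bigr].\]
Since $(x,x)\in 1+v^{-1}\mbb Z[v^{-1}]$ carries no $t$-component, this splits into two real identities: $\mrm{(I)}$ $\vec a^{\,T}G\vec b=0$, and $\mrm{(II)}$ $\vec a^{\,T}G\vec a-\vec b^{\,T}G\vec b\in 1+v^{-1}\mbb Z[v^{-1}]$.

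A Gram-matrix positivity analysis finishes the reverse direction. With $D_A=\max_{\bf r}\deg a_{\bf r}$ and $D_B=\max_{\bf r}\deg b_{\bf r}$, the top $v^{2D_A}$-coefficient of $\vec a^{\,T}G\vec a$ equals $\sum_{{\bf r}:\deg a_{\bf r}=D_A}(\alpha_{\bf r}^A)^2>0$ whenever $A\ne 0$, and similarly for $B$. Demanding (II) to have non-positive $v$-degree forces $D_A=D_B=:D$; iterating through the lower $v^{-k}$ coefficient equations of (I) and (II), using at each stage the nonnegativity of every entry of $G$, rules out $D\ge 1$. Hence $a_{\bf r},b_{\bf r}\in\mbb Z$. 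Then (II) at $v^0$ reads $\sum_{\bf r}a_{\bf r}^2-\sum_{\bf r}b_{\bf r}^2=1$, and (I) at each $v^{-k}$ gives $\sum_{\bf r,\bf s}a_{\bf r}b_{\bf s}G_{\bf r,\bf s,-k}=0$; positivity of $G$ forces $\vec b=0$, and $\sum a_{\bf r}^2=1$ then yields $\vec a=\pm e_{{\bf r}_0}$, i.e., $x=\pm IC(O_{{\bf r}_0})$.

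The principal obstacle is the variable $t=\sqrt{-1}$ in the coefficient ring $\A$. Lusztig's classical canonical-basis characterization runs over $\mbb Z[v^{\pm 1}]$, where sum-of-squares positivity immediately excludes $D\ge 1$ and pins down the signed basis. Over $\A=\mbb Z[v^{\pm 1},t]$ with $t^2=-1$, cancellations such as $1^2+t^2=0$ break naive positivity and could, a priori, admit ``spurious'' bar-invariant vectors of the required norm. The decomposition $c_{\bf r}=a_{\bf r}+tb_{\bf r}$ together with the crucial $t$-freeness of $G$ is the device that splits $(x,x)$ into the real identities (I) and (II), restoring genuine $\mbb Z$-positivity and making Lusztig's Gram-matrix argument applicable componentwise.
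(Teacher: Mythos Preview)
Your approach is genuinely different from the paper's. The paper works geometrically: it chooses a complex representative $L$ of $x$, writes $L\simeq\bigoplus_i L'_i$ with $L'_i$ simple, and uses $\mathfrak D(L)\simeq L$ together with the $\mathrm{Ext}$ properties (a)--(c) of Section~\ref{sec4.10} to force all shifts $a_i$ to vanish, so that $L$ is perverse; then property (c) is invoked to make $L$ simple, and a weight-parity check gives weight $0$ or $2$. You instead expand $x$ algebraically in the $IC$-basis and run a Gram-matrix argument over $\A$, splitting each coefficient as $a_{\bf r}+tb_{\bf r}$ to separate real and imaginary parts. The forward inclusion is fine, and the separation into identities (I) and (II) is the right manoeuvre.

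The reverse inclusion, however, has a real gap at the endgame. After you reduce to constant coefficients $a_{\bf r},b_{\bf r}\in\mbb Z$, the assertion ``positivity of $G$ forces $\vec b=0$'' is not justified and is in fact false as stated. The matrix $G$ is block-diagonal along the decomposition $\mathbf V_{\bf d}=\bigoplus_r\mathbf V_{\bf d}^r$, since $\mathrm{Ext}^*$ between sheaves living on distinct Grassmannians $\F_r$ and $\F_{r'}$ vanishes. So nothing prevents $\vec a$ from being supported in one block and $\vec b$ in another: take ${\bf d}=(1,1)$, let $\vec a$ be the indicator of the two $IC$'s on $\F_1$ and $\vec b$ the indicator of the unique $IC$ on $\F_0$. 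Then $\vec a^{\,T}G\vec b=0$ holds trivially, $\sum a_{\bf r}^2-\sum b_{\bf r}^2=2-1=1$, and one computes directly that
\[
x=IC(O_{(1,0)})+IC(O_{(0,1)})+t\cdot IC(O_{(0,0)})
\]
is bar-invariant with $(x,x)=1+2v^{-1}+v^{-2}\in 1+v^{-1}\mbb Z[v^{-1}]$, yet $x\ne\pm IC(O_{\bf r})$. Thus (I) and (II) by themselves do not pin $x$ down. The earlier ``iterating through the lower $v^{-k}$ coefficients'' step meant to exclude $D\ge 1$ suffers from the same underdetermination (cancellation between the $\vec a$-block and the $\vec b$-block persists degree by degree) and is in any case only sketched. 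The paper's argument sidesteps the coefficient ring entirely by staying at the level of complexes, which is why it never confronts this difficulty in the same form.
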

\begin{proof} For any $x\in \mathbf{B}_{\bf d}$, let $L$ be a representative complex of $x$. Then we have $\mathfrak{D}(L)\simeq L$. Let $L\simeq \oplus_{i=1}^m L'_i$, where $L'_i$ are all simple complexes. For each $L'_i$, there exists $a_i\in \mathbb{Z}$ such that $L_i:=L'_i[-a_i](-\frac{1}{2}a_i)$ is simple perverse sheaf. Let $a'=\max_i{a_i}$. Denote by $x_i$ the isomorphic classes of $L_i$ for each $i$. We have $(x,x)=\sum_{i, j}v^{a_i+a_j}(x_i,x_j)$. Since $(x,x)\in 1+v^{-1}\mbb{Z}[v^{-1}]$, by Section \ref{sec4.10} (b), we have $a' \leq 0$. Hence $a_i\leq 0$ for all $i$.

On the other hand, $\mathfrak{D}(L)=\oplus_i \mathfrak{D}(L_i)[-a_i](-\frac{1}{2}a_i)$ and $\mathfrak{D}(L_i)$ is still a simple perverse sheaf. Let $a''=\min_i{a_i}$. By a similar argument, we have $-a''\leq 0$. Hence $a_i \geq 0$ for all $i$. So $a_i=0$ for all $i$ and, therefore,
 $L=\oplus_{i=1}^mL_i$ is a perverse sheaf. By Section \ref{sec4.10} (c), we have $(L, L)\in m+v^{-1}\mbb{Z}[v^{-1}]$. Hence $L$ is a simple perverse sheaf.

 If the weight of $L$ is odd, then $(L,L) \in -1+v^{-1}\mbb{Z}[v^{-1}]$. It is a contradiction.
 Therefore, $L$ is a simple perverse sheaf of weight $0$ or 2.
\end{proof}

\section{Modified forms of $\U$  and weight modules}

\subsection{}

A $\U$-module $M$ is called a {\it weight module} if there is a decomposition of vector spaces $M=\oplus_{\lambda \in \mbb{Z}}M_{\lambda}^{\pm}$ such that
$$M^{+}_{\lambda}=\{m\in M\ |\ K\cdot m=
v^{\lambda}t^{-\lambda-p(\lambda)}m\},\ {\rm and}\ M^{-}_{\lambda}=\{m\in M\ |\ K\cdot m=-
v^{\lambda}t^{-\lambda-p(\lambda)}m\},$$
where $p(\lambda)=0$ if $\lambda$ is even and 1 otherwise.
The subspaces $M_{\lambda}^{\pm}$ are called the weight spaces of $M$.
Let $\mathcal{C}^+$ (resp. $\mathcal{C}^-$) be the category whose objects are weight modules  of the form $M=\oplus_{\lambda} M_{\lambda}^+$ (resp. $M=\oplus_{\lambda} M_{\lambda}^-$) of $\U$ and morphisms are $\U$-linear maps. 

 The {\it modified quantum superalgebra} $\dot{\U}$ associated to $\mathfrak{osp}(1|2)$ is defined to be  the associative  $\mbb{Q}[t](v)$-algebra  without unit,
 generated by $1_{\lambda}, E_{\lambda,\lambda-2}$ and $F_{\lambda,\lambda+2}$, $\forall \lambda\in \mbb{Z}$, and subject to the following defining relations.
 \begin{equation}
 \label{eq33.5}
\begin{split}
1_{\lambda} 1_{\lambda'} =\delta_{\lambda, \lambda'} 1_{\lambda}, &\\
  \quad E_{\lambda, \lambda-2 }  1_{\lambda'} =  \delta_{\lambda-2, \lambda'} E_{\lambda, \lambda-2}, \quad \ &\quad
1_{\lambda'}   E_{\lambda, \lambda -2} = \delta_{\lambda', \lambda} E_{\lambda, \lambda-2},\\
\quad F_{\lambda, \lambda+2}  1_{\lambda'} =  \delta_{\lambda+2, \lambda'} F_{\lambda, \lambda+2}, \quad \ &\quad
1_{\lambda'}  F_{\lambda, \lambda+2} =  \delta_{\lambda', \lambda} F_{\lambda, \lambda+2},\\
\end{split}
\end{equation}
\begin{equation}\label{eq34}
  E_{\lambda,\lambda-2} F_{\lambda-2,\lambda}-t^2F_{\lambda,\lambda+2}  E_{\lambda+2,\lambda}=t^{-\lambda-p(\lambda)}[\lambda]_v1_\lambda.
\end{equation}
Let $\dot{\mathcal{C}}$ be the category of unital $\dot{\U}$-modules in the sense of Lusztig ~\cite[23.1.4]{Lusztig93}.
Given a weight $\U$-module $M=\oplus_{\lambda}M^{+}_{\lambda}$, we define a $\dot{\U}$-module structure on $M$ as follows.
 $$
 E_{\lambda'+2, \lambda'}\cdot m= \delta_{\lambda, \lambda'}E\cdot m, \;
F_{\lambda'-2, \lambda'}\cdot m= \delta_{\lambda, \lambda'}F\cdot m\ {\rm and}\;
1_{\lambda'}\cdot m= \delta_{\lambda, \lambda'}m,\; {\rm for\ any}\ m\in M^{+}_{\lambda}.$$
 This $\dot{\U}$-module structure is well-defined.
To prove this,  we only  need to check the  relation (\ref{eq34}). The rest are obvious.
For any $m\in M^{+}_{\lambda}$, we have
\begin{eqnarray*}
&&(E_{\lambda,\lambda-2} F_{\lambda-2,\lambda}-t^2F_{\lambda,\lambda+2} E_{\lambda+2,\lambda})\cdot m=(EF-t^2FE)\cdot m\\
&&=\frac{K-K^{-1}}{v-v^{-1}}m=\frac{v^{\lambda}t^{-\lambda-p(\lambda)}
-v^{-\lambda}t^{\lambda+p(\lambda)}}{v-v^{-1}}m
=t^{-\lambda-p(\lambda)}[\lambda]_v1_{\lambda}m,
\end{eqnarray*}
where the last equality follows from $t^{\lambda+p(\lambda)}=t^{-\lambda-p(\lambda)}$.
It is clear that a homomorphism $f: M\to N$  in $\mathcal C^+$ becomes a homomorphism in $\dot{\mathcal C}$ if $M$ and $N$ are regarded as $\dot \U$-modules.
The above analysis  provides us with  a functor
\begin{equation}\label{eq39}
\eta: \mathcal{C}^+ \rightarrow \dot{\mathcal{C}}.
\end{equation}

Conversely, given a $\dot{\U}$-module $M$, let $M^+_{\lambda}=1_{\lambda}\cdot M$.
By using Lemma  \ref{sl-relation} (a), one can easily show that $M^+_{\lambda}\cap M^+_{\lambda'}=\{0\}$ if $\lambda\neq \lambda'$.
So we have $M=\oplus_{\lambda}M^+_{\lambda}$ as a vector space.  We now define a $\U$-module structure on $M$ by
$E\cdot m=E_{\lambda+2,\lambda}\cdot m,\
K\cdot m= v^{\lambda}t^{-\lambda-p(\lambda)} m$
and $F\cdot m=F_{\lambda-2,\lambda}\cdot m$ for any $m\in M^+_{\lambda}$.
Similarly, we can check that this $\U$-module structure is well-defined.
This defines a functor
\begin{equation}\label{eq40}
  \eta': \dot{\mathcal{C}} \rightarrow \mathcal{C}^+.
\end{equation}
It is clear that $\eta \eta'$ and $\eta' \eta$ are identity functors on $\dot{\mathcal{C}}$ and $\mathcal{C}^+$, respectively.
We have the following proposition.

\begin{prop}
\label{prop14}
The functors $\eta$ and $\eta'$ in (\ref{eq39}) and (\ref{eq40}) establish an isomorphism of categories between $\mathcal{C}^+$ and $\dot{\mathcal{C}}$.
\end{prop}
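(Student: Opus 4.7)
The plan is to argue that almost all the work has been laid out in the paragraphs preceding the statement, so the proof consists of tying together the four verifications: (i) $\eta$ is well defined on objects, (ii) $\eta'$ is well defined on objects, (iii) both are compatible with morphisms, and (iv) the two compositions are the identity.

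For (i), given $M=\oplus_\lambda M_\lambda^+\in\mathcal C^+$, the relations in \eqref{eq33.5} are immediate from the weight-space decomposition, since $1_\lambda$ becomes projection onto $M^+_\lambda$ and $E,F$ shift weight by $\pm 2$. The one nontrivial relation is \eqref{eq34}, which the excerpt has in fact already verified in the displayed calculation; the essential ingredient is the identity $t^{\lambda+p(\lambda)}=t^{-\lambda-p(\lambda)}$, which holds because $\lambda+p(\lambda)$ is always even (by definition of $p$). For (ii), given a unital $\dot{\U}$-module $M$, one first checks that $M=\oplus_\lambda 1_\lambda M$ as claimed (orthogonality of the $1_\lambda$ and unitality), and then verifies relations (S1)–(S3). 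Relations (S1) and (S2) are formal from the weight shifts, while (S3) is the same computation in reverse: on $M^+_\lambda$ the operator $(EF-t^2FE)$ acts as $t^{-\lambda-p(\lambda)}[\lambda]_v\cdot\mathrm{id}$, which is precisely $\tfrac{K-K^{-1}}{v-v^{-1}}$ since $K$ is scalar $v^\lambda t^{-\lambda-p(\lambda)}$ on $M^+_\lambda$.

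For (iii), a $\U$-linear map $f:M\to N$ automatically commutes with $K$, hence preserves weight spaces, hence commutes with each idempotent $1_\lambda$; together with commutation with $E$ and $F$ this is exactly $\dot{\U}$-linearity. Conversely, a $\dot{\U}$-linear map preserves each $1_\lambda M$, so is compatible with the reconstructed $K$, and commutes with $E,F$ summed over all $\lambda$. Thus both functors send morphisms to morphisms.

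Finally, for (iv), the underlying vector space of $\eta'\eta(M)$ is the same as $M$, with the same decomposition $M^+_\lambda$, and the action of $E,F,K$ is recovered verbatim from the definitions; similarly for $\eta\eta'$ on a $\dot\U$-module, since $1_\lambda M$ is by construction the $\lambda$-weight space of the $\U$-module structure produced by $\eta'$. Thus both compositions are literally the identity on objects, and the identity on morphisms follows from (iii). I expect no genuine obstacle: the only point requiring a moment of care is the parity identity $t^{\lambda+p(\lambda)}=t^{-\lambda-p(\lambda)}$ that reconciles the $t$-factor in (S3) of $\U$ with that in \eqref{eq34} of $\dot{\U}$, and this has already been used in the paragraph above the proposition.
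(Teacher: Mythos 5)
Your proposal is correct and follows essentially the same route as the paper, whose proof of Proposition \ref{prop14} is exactly the preceding discussion: checking relation (\ref{eq34}) via the parity identity $t^{\lambda+p(\lambda)}=t^{-\lambda-p(\lambda)}$, reconstructing the $\U$-action from the orthogonal idempotents $1_\lambda$, and observing that $\eta\eta'$ and $\eta'\eta$ are identity functors. No gaps.
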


Note that the notion of an isomorphism of categories is stronger than the notion of an equivalence of categories.
We thank Jon Kujawa  for pointing out this to us.

\subsection{}

Recall that the {\it modified quantum algebra} $\dot{\U}(\mathfrak{sl}(2))$ associated to $\mathfrak{sl}(2)$ is a $\mbb{Q} [t](v)$-algebra without unit,
generated by
$\widetilde{1}_{\lambda}, \widetilde{E}_{\lambda,\lambda-2}$ and $\widetilde{F}_{\lambda,\lambda+2}$, $\forall \lambda\in \mbb{Z}$,
 subjects to the analogous relations of  (\ref{eq33.5}) and the following one.
\begin{equation*}\label{eq32}
 \widetilde{ E}_{\lambda,\lambda-2} \widetilde{F}_{\lambda-2,\lambda}-\widetilde{F}_{\lambda,\lambda+2}  \widetilde{E}_{\lambda+2,\lambda}
 =[\lambda]_v \widetilde 1_\lambda.
\end{equation*}

\begin{thm}
\label{thm3}
(a) The assignments
$\widetilde{E}_{\lambda,\lambda-2}\mapsto t^{\lambda+p(\lambda)}E_{\lambda,\lambda-2}, \ \widetilde{F}_{\lambda,\lambda+2}\mapsto F_{\lambda,\lambda+2}$ and $\widetilde{1}_{\lambda}\mapsto 1_{\lambda}$,  for any $\lambda \in \mbb Z$,
define a unique algebra isomorphism $\varphi: \dot{\U}(\mathfrak{sl}(2)) \rightarrow \dot{\U}$.

(b) The algebra $\dot \U$ is isomorphic to  the algebra in the same notation in ~\cite[Section 6]{CW12} over $\mbb Q[t](v)$.

(c) There is a basis in $\dot \U$ whose structure constants are in $\mbb N[v, v^{-1}]$.
\end{thm}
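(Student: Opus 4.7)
The plan for part (a) is to verify directly that the assignments preserve the defining relations of $\dot{\U}(\mathfrak{sl}(2))$. The idempotent-type relations analogous to (\ref{eq33.5}) are immediate: the scalar factors $t^{\lambda+p(\lambda)}$ commute with the images $1_\lambda$ of the idempotents, and the matching of weight indices transports verbatim. The only substantive relation to check is
\[
\widetilde{E}_{\lambda,\lambda-2}\widetilde{F}_{\lambda-2,\lambda} - \widetilde{F}_{\lambda,\lambda+2}\widetilde{E}_{\lambda+2,\lambda} = [\lambda]_v \widetilde{1}_\lambda.
\]
Applying $\varphi$ to the left side, using $p(\lambda+2) = p(\lambda)$, one obtains
\[
t^{\lambda+p(\lambda)} E_{\lambda,\lambda-2} F_{\lambda-2,\lambda} \; - \; t^{\lambda+2+p(\lambda)} F_{\lambda,\lambda+2} E_{\lambda+2,\lambda}
= t^{\lambda+p(\lambda)}\bigl(E_{\lambda,\lambda-2} F_{\lambda-2,\lambda} - t^2 F_{\lambda,\lambda+2} E_{\lambda+2,\lambda}\bigr),
\]
which by (\ref{eq34}) equals $t^{\lambda+p(\lambda)} \cdot t^{-\lambda-p(\lambda)}[\lambda]_v\, 1_\lambda = [\lambda]_v\, 1_\lambda = [\lambda]_v\, \varphi(\widetilde{1}_\lambda)$. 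Well-definedness of $\varphi$ follows. An inverse is produced by the obvious rescaled assignment $E_{\lambda,\lambda-2} \mapsto t^{-\lambda-p(\lambda)} \widetilde{E}_{\lambda,\lambda-2}$, etc., so $\varphi$ is an isomorphism.

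For part (b), I would compose the iso in (a) with the identification already displayed in Section~2 between the presentation of $\U$ of this paper and Clark--Wang's $U_1$ (extending the ground field to $\Q[t](v)$). Clark--Wang's modified algebra $\dot U_1$ is obtained from $U_1$ by adjoining a family of weight idempotents with the same formal properties as the $1_\lambda$ here; matching weight spaces under the generator correspondence $(E,F,t^{-1}K) \leftrightarrow (E,F,K)$ and $(vt^{-1},t^2) \leftrightarrow (q,\pi)$ extends uniquely to an isomorphism $\dot U_1 \cong \dot \U$ over $\Q[t](v)$.

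For part (c), the isomorphism $\varphi$ in (a) is all that is needed. Lusztig's canonical basis $\dot{\mbf B}$ of $\dot{\U}(\mathfrak{sl}(2))$ has structure constants in $\mbb N[v,v^{-1}]$ (\cite{Lusztig93}, Chap.~25). Because $\varphi$ is an algebra isomorphism over $\Q[t](v)$, the set $\varphi(\dot{\mbf B})$ is a $\Q[t](v)$-basis of $\dot \U$, and the expansion of a product $\varphi(b)\varphi(b') = \varphi(bb')$ in this basis has exactly the same coefficients as that of $bb'$ in $\dot{\mbf B}$. Hence the structure constants of $\varphi(\dot{\mbf B})$ lie in $\mbb N[v,v^{-1}]$.

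The main obstacle is the sign computation in part (a): one must arrange that the single scalar $t^{\lambda+p(\lambda)}$ simultaneously converts the $\mathfrak{osp}(1|2)$ supercommutator relation (with the crucial sign $-t^2 = 1$ and the normalizing factor $t^{-\lambda-p(\lambda)}$) into the $\mathfrak{sl}(2)$ commutator relation, which forces precisely this scaling on $\widetilde E$ and none on $\widetilde F$. Once this is pinned down, (b) and (c) are essentially formal consequences. It is worth noting that the existence of such a $\varphi$ is in fact predicted by the geometric identification of the $q$-Schur algebras for $\mathfrak{osp}(1|2)$ and $\mathfrak{sl}(2)$ established in Theorem~\ref{prop15} and Proposition~\ref{prop13}, which is philosophically what makes (c) unsurprising.
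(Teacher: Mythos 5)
Your parts (a) and (c) are correct and coincide with the paper's own argument: you verify the one nontrivial relation by factoring out $t^{\lambda+p(\lambda)}$ (using $p(\lambda+2)=p(\lambda)$ and the commutator relation (\ref{eq34})), produce the inverse by the rescaled assignment $E_{\lambda,\lambda-2}\mapsto t^{-\lambda-p(\lambda)}\widetilde E_{\lambda,\lambda-2}$, and obtain (c) by transporting Lusztig's canonical basis of $\dot{\U}(\mathfrak{sl}(2))$ through $\varphi$, exactly as in the paper.

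Part (b) is where your argument has a genuine gap. In this paper $\dot{\U}$ is not obtained from $\U$ by ``adjoining weight idempotents''; it is defined abstractly by the presentation (\ref{eq33.5})--(\ref{eq34}), and Clark--Wang's modified algebra in \cite[Section 6]{CW12} is likewise given by its own presentation. Consequently the Section 2 identification $\U\cong U_1$ of the \emph{unmodified} algebras does not by itself produce any map between the two modified algebras, and your assertion that ``matching weight spaces \ldots extends uniquely to an isomorphism'' is precisely the statement that needs proof rather than a formal consequence; note also that the isomorphism of (a) has target $\dot{\U}$, so composing it with the unmodified identification gives nothing on the Clark--Wang side until $\dot U_1$ has been identified with a presented or quotient-constructed object. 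The paper closes this gap by a direct comparison of presentations: it rewrites (\ref{eq34}) as
\[
(t^{\lambda+p(\lambda)} E_{\lambda,\lambda-2})\,(t^{\lambda-1} F_{\lambda-2,\lambda})
- t^{2}\,(t^{\lambda+1} F_{\lambda,\lambda+2})\,(t^{\lambda+2+p(\lambda+2)} E_{\lambda+2,\lambda})
= [\lambda]_{v,t}\, 1_{\lambda},
\]
which is verbatim the commutator relation of \cite[6.3]{CW12} for the rescaled generators, so the rescaling itself defines the isomorphism of (b). If you insist on your route through the unmodified algebras, you would first have to identify each modified algebra with the corresponding Lusztig-style construction $\bigoplus_{\lambda,\mu} 1_{\lambda}\U 1_{\mu}$ and check that the correspondence $q=vt^{-1}$, $\pi=t^{2}$, $K\mapsto t^{-1}K$ matches the weight-space decompositions (the eigenvalues $v^{\lambda}t^{-\lambda-p(\lambda)}$ versus Clark--Wang's $q$-powers), which is strictly more work than the one-line relation check above.
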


\begin{proof}   We have
\begin{eqnarray*}
  &&\varphi(\widetilde{ E}_{\lambda,\lambda-2} \widetilde{F}_{\lambda-2,\lambda}-
  \widetilde{F}_{\lambda,\lambda+2}  \widetilde{E}_{\lambda+2,\lambda}-[\lambda]_v\widetilde{1}_\lambda)\\
&=& t^{\lambda+p(\lambda)}(E_{\lambda,\lambda-2} F_{\lambda-2,\lambda}-t^{2}F_{\lambda,\lambda+2} E_{\lambda+2,\lambda})-[\lambda]_v1_\lambda=0.
\end{eqnarray*}
Similarly,  one can show that the other defining relations of $\dot \U(\mathfrak{sl}(2))$ get sent to zero by $\varphi$.
This shows that $\varphi$ is an algebra homomorphism.
Similarly, there is a unique  algebra homomorphism $\varphi':  \dot{\U} \rightarrow \dot{\U}(\mathfrak{sl}(2))$ defined by
$E_{\lambda,\lambda-2}\mapsto t^{-\lambda-p(\lambda)}\widetilde{E}_{\lambda,\lambda-2},$ $ F_{\lambda,\lambda+2}\mapsto \widetilde{F}_{\lambda,\lambda+2}$ and $1_{\lambda}\mapsto \widetilde{1}_{\lambda}$. Clearly, $\varphi \varphi'=Id$ and $\varphi'\varphi=Id$. This finishes the proof of (a).
Statement  ($c$) follows by taking  the basis to be the image of the canonical basis of $\dot{\U}(\mathfrak{sl}(2))$ under the isomorphism in (a).
The commutator relation (\ref{eq34}) can be rewritten as
\[
(t^{\lambda+p(\lambda)} E_{\lambda, \lambda-2} )  (t^{\lambda-1} F_{\lambda-2, \lambda}) -
t^2 (t^{\lambda+1}F_{\lambda, \lambda+2}) (t^{\lambda+2+p(\lambda+2)} E_{\lambda+2, \lambda})
=[\lambda]_{v, t} 1_{\lambda}.
\]
By comparing with the commutator relation for the modified quantum $\mathfrak{osp}(1|2)$ in ~\cite[6.3]{CW12}, we have (b).
\end{proof}

Let $\U(\mathfrak{sl}(2))$ be the quantum algebra associated to $\mathfrak{sl}(2)$ defined over the field $\mathbb Q[t](v)$.
 To avoid any confusion, we shall denote by $\widetilde{E}, \widetilde{F}, \widetilde{K}^{\pm 1}$ the standard generators of $\U(\mathfrak{sl}(2))$.
Recall that a $\U(\mathfrak{sl}(2))$-module $M$ is called a {\it weight module of type 1} if there is a decomposition of vector spaces
$M=\oplus_{\lambda \in \mbb{Z}}M_{\lambda}$ such that $M_{\lambda}=\{m\in M\ |\  \widetilde K\cdot m= v^{\lambda}m\}.$
Let $\mathcal{C}^+(\mathfrak{sl}(2))$ be the category whose objects are weight $\U(\mathfrak{sl}(2))$-modules of type 1
and morphisms are $\U(\mathfrak{sl}(2))$-linear maps.
Similarly, we can define the category $\mathcal{C}^-(\mathfrak{sl}(2))$ of weight modules of type $-1$.
By a similar argument, $\mathcal{C}^+(\mathfrak{sl}(2))$ is equivalent to the category of unital  $\dot{\U}(\mathfrak{sl}(2))$-modules.
By Theorem \ref{thm3}, we have

\begin{prop}
\label{thm4}
The category $\mathcal{C}^+$ is isomorphic  to the category $ \mathcal{C}^+(\mathfrak{sl}(2))$.
\end{prop}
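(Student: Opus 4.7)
The plan is to chain together three isomorphisms of categories. First, Proposition~\ref{prop14} gives an isomorphism $\mathcal{C}^+ \cong \dot{\mathcal{C}}$ via the explicit functors $\eta, \eta'$. Second, a verbatim repetition of the argument preceding Proposition~\ref{prop14} (with $\U$ replaced by $\U(\mathfrak{sl}(2))$ and the weight-space identification $\widetilde{K}\cdot m = v^\lambda m$ used in place of the twisted one) yields an isomorphism $\mathcal{C}^+(\mathfrak{sl}(2)) \cong \dot{\mathcal{C}}(\mathfrak{sl}(2))$, where $\dot{\mathcal{C}}(\mathfrak{sl}(2))$ is the category of unital $\dot{\U}(\mathfrak{sl}(2))$-modules. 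Third, the algebra isomorphism $\varphi : \dot{\U}(\mathfrak{sl}(2)) \to \dot{\U}$ of Theorem~\ref{thm3}(a) induces an isomorphism of module categories $\varphi^* : \dot{\mathcal{C}} \to \dot{\mathcal{C}}(\mathfrak{sl}(2))$ by pullback along $\varphi$ (and $(\varphi^{-1})^*$ is its inverse). Composing these gives the desired isomorphism
\[
\mathcal{C}^+ \;\xrightarrow{\ \eta\ }\; \dot{\mathcal{C}} \;\xrightarrow{\ \varphi^*\ }\; \dot{\mathcal{C}}(\mathfrak{sl}(2)) \;\xrightarrow{\ \eta'_{\mathfrak{sl}_2}\ }\; \mathcal{C}^+(\mathfrak{sl}(2)).
\]

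The only subtlety I foresee is checking that pullback along $\varphi$ really is an \emph{isomorphism} of categories (not merely an equivalence), since Proposition~\ref{prop14} asserts the stronger property. This is essentially automatic: because $\varphi$ is a bijection on underlying sets (of generators, hence of elements, as both algebras share the same basis-indexing by weights), the pullback functor $\varphi^*$ acts as the identity on underlying vector spaces and on morphisms while merely reinterpreting the action, so $\varphi^* \circ (\varphi^{-1})^* = \mathrm{id}$ strictly, not just up to natural isomorphism.

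To make this very explicit, I would spell out the composite functor: given a weight $\U$-module $M = \oplus_\lambda M^+_\lambda$ in $\mathcal{C}^+$, the corresponding object in $\mathcal{C}^+(\mathfrak{sl}(2))$ has the same underlying vector space $M$ with weight decomposition $M_\lambda := M^+_\lambda$, and the $\U(\mathfrak{sl}(2))$-action is determined by
\[
\widetilde{E}\cdot m = t^{\lambda+p(\lambda)} E\cdot m,\qquad \widetilde{F}\cdot m = F\cdot m,\qquad \widetilde{K}\cdot m = v^\lambda m,\quad \forall\, m\in M^+_\lambda,
\]
consistent with the assignment in Theorem~\ref{thm3}(a). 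A short direct verification that the $\mathfrak{sl}(2)$ relations hold (using $K\cdot m = v^\lambda t^{-\lambda-p(\lambda)} m$ on $M^+_\lambda$ and $t^{\lambda+p(\lambda)} = t^{-\lambda-p(\lambda)}$) confirms the construction independently of the chain above, and the inverse functor is obtained by the analogous formulas with $t$-powers inverted. No step requires serious work; the main point of the proposition is that all the heavy lifting has already been done in Theorem~\ref{thm3}(a) and Proposition~\ref{prop14}, so I would keep the proof to a few lines indicating the composition.
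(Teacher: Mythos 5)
Your main argument is exactly the paper's route: the paper proves this proposition by noting that the argument for Proposition \ref{prop14} applies verbatim to $\U(\mathfrak{sl}(2))$, giving $\mathcal{C}^+(\mathfrak{sl}(2))\cong\dot{\mathcal{C}}(\mathfrak{sl}(2))$, and then transporting module structures along the isomorphism $\varphi$ of Theorem \ref{thm3}(a); your remark that pullback along an algebra isomorphism is an isomorphism (not merely an equivalence) of categories is the right point to make, and the chain $\eta'_{\mathfrak{sl}_2}\circ\varphi^*\circ\eta$ is correct.

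However, the ``very explicit'' formula you offer as an independent confirmation has a sign error, and the ``short direct verification'' you promise would in fact fail. Under $\varphi$ the generator $\widetilde{E}_{\lambda+2,\lambda}$ (which is the one acting on $M^+_{\lambda}$) goes to $t^{(\lambda+2)+p(\lambda+2)}E_{\lambda+2,\lambda}$, so the induced action on $m\in M^+_{\lambda}$ is $\widetilde{E}\cdot m=t^{\lambda+2+p(\lambda)}E\cdot m=-t^{\lambda+p(\lambda)}E\cdot m$: the exponent is indexed by the \emph{target} weight $\lambda+2$, not the source weight $\lambda$ as you wrote. With your formula $\widetilde{E}\cdot m=t^{\lambda+p(\lambda)}E\cdot m$, $\widetilde{F}\cdot m=F\cdot m$ on $M^+_{\lambda}$, one computes (using $t^2=-1$, $EF-t^2FE=\frac{K-K^{-1}}{v-v^{-1}}$, $K\cdot m=v^{\lambda}t^{-\lambda-p(\lambda)}m$ and $t^{\lambda+p(\lambda)}=t^{-\lambda-p(\lambda)}=\pm1$)
\begin{equation*}
(\widetilde{E}\widetilde{F}-\widetilde{F}\widetilde{E})\cdot m
=-t^{\lambda+p(\lambda)}\bigl(EF-t^{2}FE\bigr)\cdot m
=-[\lambda]_v\, m,
\end{equation*}
whereas $\widetilde{K}\cdot m=v^{\lambda}m$ forces $+[\lambda]_v m$; so your formulas define an object of $\mathcal{C}^-(\mathfrak{sl}(2))$, not $\mathcal{C}^+(\mathfrak{sl}(2))$. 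This does not damage the proof, since the composite of the three functors you already constructed is the correct functor; simply replace the displayed coefficient by $t^{\lambda+2+p(\lambda)}$ (equivalently $-t^{\lambda+p(\lambda)}$), or drop the explicit paragraph altogether.
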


\subsection{}

By a similar argument as the proof of Proposition \ref{prop14}, the category $\mathcal{C}^-$ is equivalent to  $\mathcal{C}^-(\mathfrak{sl}(2))$. Let $\mathcal{C}=\mathcal{C}^+\oplus \mathcal{C}^-$.
Note that the highest weight simple  modules $\Lambda_d^{\pm}$, for all $d\in \mbb{N}$, are objects in $\mathcal{C}$. Let $\mathcal{C}(\mathfrak{sl}(2))=\mathcal{C}^+(\mathfrak{sl}(2))\oplus \mathcal{C}^-(\mathfrak{sl}(2))$. Then we have the following theorem.

 \begin{thm}
   The category $\mathcal{C}$ is isomorphic  to $\mathcal{C}(\mathfrak{sl}(2))$.
 \end{thm}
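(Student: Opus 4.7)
The plan is to bootstrap from Proposition~\ref{thm4} to the full theorem by handling the sign-$-$ component in the same spirit and then assembling the two pieces as a direct sum of categories. Concretely, the isomorphism will take the form $\Phi = \Phi^+ \oplus \Phi^-$, where $\Phi^+: \mathcal{C}^+ \xrightarrow{\sim} \mathcal{C}^+(\mathfrak{sl}(2))$ is supplied directly by Proposition~\ref{thm4} and $\Phi^-: \mathcal{C}^- \xrightarrow{\sim} \mathcal{C}^-(\mathfrak{sl}(2))$ is the analogous isomorphism for the negative sector.

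For the construction of $\Phi^-$ I would adapt the chain of Propositions~\ref{prop14}, Theorem~\ref{thm3}(a), and Proposition~\ref{thm4} to the $-$ setting: on a weight-$\lambda$ vector of $\mathcal{C}^-$ the operator $K$ has eigenvalue $-v^{\lambda} t^{-\lambda - p(\lambda)}$, so $(K - K^{-1})/(v - v^{-1})$ contributes $-t^{-\lambda - p(\lambda)} [\lambda]_v$. Accordingly one defines a ``negative'' modified algebra $\dot{\U}^-$ with the same presentation as $\dot{\U}$ except that (\ref{eq34}) is replaced by its negative, builds the analogues of the functors (\ref{eq39})--(\ref{eq40}) verbatim to obtain $\mathcal{C}^- \simeq \dot{\mathcal{C}}^-$, and absorbs the sign into an algebra isomorphism $\dot{\U}(\mathfrak{sl}(2)) \xrightarrow{\sim} \dot{\U}^-$ via $\widetilde{E}_{\lambda, \lambda - 2} \mapsto -t^{\lambda + p(\lambda)} E_{\lambda, \lambda - 2}$ (with $\widetilde{F}_{\lambda, \lambda + 2}$ and $\widetilde{1}_\lambda$ fixed), just as in Theorem~\ref{thm3}(a). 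A cleaner alternative, which I would actually prefer, is to exhibit the algebra involution $\sigma$ of $\U$ given by $E \mapsto -E$, $F \mapsto F$, $K^{\pm 1} \mapsto -K^{\pm 1}$, and its exact counterpart $\widetilde{\sigma}$ on $\U(\mathfrak{sl}(2))$; each of the relations (S1)--(S3) is preserved because the extra sign appears symmetrically on both sides, so pullback yields category isomorphisms $\sigma^*: \mathcal{C}^+ \xrightarrow{\sim} \mathcal{C}^-$ and $\widetilde{\sigma}^*: \mathcal{C}^+(\mathfrak{sl}(2)) \xrightarrow{\sim} \mathcal{C}^-(\mathfrak{sl}(2))$, and one may set $\Phi^- := \widetilde{\sigma}^* \circ \Phi^+ \circ (\sigma^*)^{-1}$.

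Finally, the direct-sum step is formal: the relations $KE = v^2 t^{-2} EK$ and $KF = v^{-2} t^2 FK$ force both $E$ and $F$ to preserve the sign of a $K$-eigenvalue, so each $\U$-module in $\mathcal{C}$ decomposes canonically as a $\U$-submodule sum $M = M^+ \oplus M^-$ with $M^\pm = \oplus_\lambda M^\pm_\lambda$, exhibiting $\mathcal{C} = \mathcal{C}^+ \oplus \mathcal{C}^-$ as a genuine direct sum of categories; the same applies verbatim to $\mathcal{C}(\mathfrak{sl}(2))$. Hence $\Phi^+ \oplus \Phi^-$ is an isomorphism of categories $\mathcal{C} \xrightarrow{\sim} \mathcal{C}(\mathfrak{sl}(2))$. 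The only real obstacle is the sign bookkeeping in the middle step; once the involution viewpoint is adopted, this becomes essentially automatic and no further computation is needed beyond what is already in Proposition~\ref{thm4}.
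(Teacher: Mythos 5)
Your proof is correct, and it overlaps with the paper's (very terse) argument while adding one genuinely different ingredient. The paper defines $\mathcal{C}=\mathcal{C}^+\oplus\mathcal{C}^-$ and $\mathcal{C}(\mathfrak{sl}(2))=\mathcal{C}^+(\mathfrak{sl}(2))\oplus\mathcal{C}^-(\mathfrak{sl}(2))$ as direct sums of categories and simply asserts that the minus sector is handled ``by a similar argument as the proof of Proposition \ref{prop14}'' --- i.e.\ exactly your first route, re-running the chain through a sign-modified $\dot{\U}$ and the analogue of Theorem \ref{thm3}(a); your computation that the commutator picks up the sign (so (\ref{eq34}) is negated) and that $\widetilde{E}_{\lambda,\lambda-2}\mapsto -t^{\lambda+p(\lambda)}E_{\lambda,\lambda-2}$ absorbs it is the content that the paper leaves implicit. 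Your preferred route via the involutions $\sigma$ ($E\mapsto -E$, $F\mapsto F$, $K^{\pm1}\mapsto -K^{\pm1}$) and $\widetilde\sigma$ is a genuine variant not in the paper: one checks in a line that (S1)--(S3) and the $\mathfrak{sl}(2)$ relations acquire the same overall sign on both sides, that pullback swaps the $+$ and $-$ eigenvalue types while preserving weight spaces, and then $\Phi^-:=\widetilde{\sigma}^*\circ\Phi^+\circ\sigma^*$ transports Proposition \ref{thm4} to the minus sector with no further algebra; this buys economy and makes the sign bookkeeping automatic, at the cost of introducing the auxiliary automorphisms, whereas the paper's route is a self-contained repetition of the modified-algebra argument. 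One small remark: since $\mathcal{C}$ is \emph{defined} as the direct sum $\mathcal{C}^+\oplus\mathcal{C}^-$, your final paragraph showing that $E$ and $F$ preserve the sign of the $K$-eigenvalue (so that an arbitrary weight module splits) is not needed for this statement, though it is harmless and would be the right observation if $\mathcal{C}$ were instead defined as the category of all weight modules.
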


\subsection{}

By Lemma ~\ref{sl-relation},  there is a unique surjective algebra homomorphism
\[
\phi_d: \overset{.}{\U} \to \mbf S_{v,t}(2,d)
\]
defined by
$E_{\lambda, \lambda-2} \mapsto \left\{\begin{array}{ll}
t^{-\frac{d-p(d)}{2}}E_{r, r+1},& {\rm if}\  \lambda=d-2r  \\
0,& {\rm otherwise},\end{array}\right.
  \ 1_{\lambda} \mapsto  \left\{\begin{array}{ll}1_{r},& {\rm if}\  \lambda =d-2r\\
0,& {\rm otherwise},\end{array}\right.$
and $F_{\lambda, \lambda+2} \mapsto \left\{\begin{array}{ll}t^{-\frac{d-p(d)}{2}}F_{r, r-1},
& {\rm if}\ \lambda=d-2r \\
0,& {\rm otherwise.}\end{array}\right .$
By checking the image of the generators, we have
\[
\psi_{d,d+2}\phi_{d+2}=\phi_{d},
\]
where $\psi_{d, d+2}$ is (\ref{psi}) in Section \ref{S_v}.

\end{document}